\newtheorem{theorem}{Theorem}[section]
\newtheorem{lemma}[theorem]{Lemma}
\newtheorem{proposition}{Proposition}
\theoremstyle{definition}
\newtheorem{remark}{Remark}
\newtheorem{assum}{Assumption}[section]
\newtheorem{cor}{Corollary}[section]
\newcommand{\pos}{\mathrm{pos}}
\newcommand{\pri}{\mathrm{prior}}
\newcommand{\Rm}{\mathrm{m}}
\newcommand{\EE}{\mathbb{E}}
\newcommand{\RR}{\mathbb{R}}
\newcommand{\Cov}{\mathrm{Cov}}
\newcommand{\MCF}{\mathcal{F}}
\newcommand{\MCG}{\mathcal{G}}
\newcommand{\sm}{\mathsf{m}}
\newcommand{\rd}{\,\mathrm{d}}
\title[Weighted Ensemble Kalman Inversion]{Ensemble Kalman Inversion for nonlinear problems: weights, consistency, and variance bounds}
\author[Zhiyan Ding, Qin Li and Jianfeng Lu]{}
\subjclass{Primary: 62D05; Secondary: 82C31.}
 \keywords{Ensemble Kalman Inversion, Importance sampling, Fokker-Planck equation}
 \email{zding49@math.wisc.edu}
 \email{qinli@math.wisc.edu}
 \email{jianfeng@math.duke.edu}
\thanks{\textcolor{black}{Zhiyan Ding and Qin Li are supported in part by NSF CAREER DMS-1750488, NSF TRIPODS 1740707 and Wisconsin Data Science Initiative. The work of Jianfeng Lu is supported in part by National Science Foundation via grants DMS-1454939 and DMS-2012286. All three authors thank the two anonymous referees for the very helpful suggestions.}}
\thanks{$^*$ Corresponding author: Zhiyan Ding}
\begin{document}
\maketitle

% Enter the first author's name and address:
\centerline{\scshape Zhiyan Ding}
\medskip
{\footnotesize
% please put the address of the first author
 \centerline{Department of Mathematics}
   \centerline{University of Wisconsin-Madison}
   \centerline{Madison, WI 53705 USA}
} % Do not forget to end the {\footnotesize by the sign }

\medskip

\centerline{\scshape Qin Li$^\dagger$}
\medskip
{\footnotesize
% please put the address of the first author
 \centerline{Department of Mathematics}
   \centerline{University of Wisconsin-Madison}
   \centerline{Madison, WI 53705 USA}
} % Do not forget to end the {\footnotesize by the sign }

\medskip

\centerline{\scshape Jianfeng Lu$^*$}
\medskip
{\footnotesize
% please put the address of the first author
 \centerline{Department of Mathematics, Department of Physics, and Department of Chemistry}
   \centerline{Duke University}
   \centerline{Durham, NC 27708 USA}
} % Do not forget to end the {\footnotesize by the sign }

\bigskip

\begin{abstract}
\textcolor{black}{Ensemble Kalman Inversion (EnKI)~\cite{Iglesias_2013} and Ensemble Square Root Filter (EnSRF)~\cite{ESRF2003} are popular sampling methods for obtaining a target posterior distribution. They can be seem as one step (the analysis step) in the data assimilation method Ensemble Kalman Filter~\cite{Evensen:2006:DAE:1206873,BR2010}. Despite their popularity, they are, however, not unbiased when the forward map is nonlinear~\cite{ding2019ensemble,doi:10.1137/140981319,doi:10.1137/140984415}. Important Sampling (IS), on the other hand, obtains the unbiased sampling at the expense of large variance of weights, leading to slow convergence of high moments.}

\textcolor{black}{We propose WEnKI and WEnSRF, the weighted versions of EnKI and EnSRF in this paper. It follows the same gradient flow as that of EnKI/EnSRF with weight corrections. Compared to the classical methods, the new methods are unbiased, and compared with IS, the method has bounded weight variance. Both properties will be proved rigorously in this paper. We further discuss the stability of the underlying Fokker-Planck equation. This partially explains why EnKI, despite being inconsistent, performs well occasionally in nonlinear settings. Numerical evidence will be demonstrated at the end. }
\end{abstract}

\section{Introduction}
How to sample from an intractable distribution is a classical challenge emerging from Bayesian statistics, machine learning, computational physics, among many other areas. Denote $\mathcal{G}: \mathcal{X}\rightarrow\mathcal{Y}$ a forward map between separable Hilbert spaces $\mathcal{X}$ and $\mathcal{Y}$. While the forward problem amounts to finding $\mathcal{G}(u)$ for every $u\in\mathcal{X}$, the inverse problem amounts to reconstructing the unknown parameters $u$ from the observation $y$. Sampling provides a probability perspective for such reconstruction procedure. Throughout the paper we set $\mathcal{X}=\mathbb{R}^L$ and $\mathcal{Y} = \mathbb{R}^K$.

Let $y$ be the collected data. It is generated from the forward map $\mathcal{G}$ acting on $u$ with added Gaussian noise $\eta$ that is assumed to be independent of $u$:
\[
y=\mathcal{G}(u)+\eta\,,\quad\text{with}\quad \eta\sim \mathcal{N}(0,\Gamma)\,.
\]
Throughout, we assume $\mathcal{G}$ is sufficiently smooth and its gradient is denoted by
\[
[\nabla\mathcal{G}(u)]_{i,j} = \partial_j\mathcal{G}_i,\quad \forall 1\leq i\leq K,\,1\leq j\leq L\,.
\]

To find $u$ using $y$, a typical approach is to perform minimization. We denote the least-squares functional $\Phi(\cdot;y):\mathcal{X}\rightarrow\mathbb{R}$ by
\begin{equation}\label{eqn:phi}
\Phi(u;y)=\frac{1}{2}\left|y-\mathcal{G}(u)\right|^2_\Gamma=\frac{1}{2}\left(y-\MCG(u)\right)^\top\Gamma^{-1}\left(y-\MCG(u)\right)\,,
\end{equation}
then the optimal solution $u^\ast$ is simply the parameter that minimizes the mismatch:
\begin{equation}\label{eqn:min}
u^\ast = \text{argmin}_{u}\Phi(u;y)\,.
\end{equation}
This approach however is unable to characterize the uncertainty of the estimation. 
In the Bayesian formulation, one takes a probability point of view, and regards $u$ as a random variable. The aim is to reconstruct the probability distribution of $u$ that combines the prior knowledge and the information from the collected data $y$. More explicitly, let $\rho_\pri(u)$ be the prior distribution, then the posterior distribution of $u$, denoted by $\rho_\pos$, includes the prior distribution, modified by the likelihood function:
\begin{equation}\label{GT}
\rho_\pos(u) =\frac{1}{Z}\exp\left(-\Phi(u;y)\right)\rho_\pri(u)\,.
\end{equation}
The normalization constant $Z$ is given by:
\[
Z:=\int_{\mathcal{X}}\exp\left(-\Phi(u;y)\right)\rho_\pri(u)\rd u\,,\quad\text{so that}\quad \int \rho_\pos(u)\rd u = 1\,.
\]
This perspective provides the full landscape of $u$. While it provides more information, the computational cost is certainly more demanding.
%\ql{didn't you say you changed all $d$ to $\rd$?}\zd{Fixed, I originially thought we only need to change $d$ in the integral.}

Sampling is one problem emerging under this framework: how to design a cheap numerical solver that generates (hopefully i.i.d.) samples from the target distribution~\eqref{GT}? In particular, suppose one can sample $N$ particles in $\{u^n\}_{n=1}^N\in \mathcal{X}$, and each particle is associated with a weight $w^n$, then how to design the values for $(u^n\,,w^n)$ so that, in some sense
\begin{equation}\label{eqn:approx_goal}
\sum_{n=1}^N w^n\delta_{u^n}\approx \rho_\pos\quad ?
\end{equation}

Many sampling algorithms have been proposed in literature, ranging from classical techniques such as Markov chain Monte Carlo to strategies based on interacting particles. Some set $w^n=\frac{1}{N}$ for all $n$, while others use $u^n$-dependent weights $w^n$. We will explore the latter in this work. 

There are two general guiding principles for designing sampling algorithms: consistency and small variance. 
\begin{itemize}[wide]
\item \textcolor{black}{Consistency means that the ensemble distribution is ``equivalent'' to the target posterior distribution}, in the average sense: When tested on all smooth functions $f$, we require
\begin{equation}\label{eqn:consistent_is}
\EE\left(\sum^N_{n=1}\omega^nf(u^n)\right)=\EE_{\rho_\pos}(f)\,.
\end{equation}
Here the $\EE$ sign on the left hand side means taking expectation of all sampling configurations. Denote the ensemble distribution
\begin{equation}\label{eqn:mu_ensemble_intro}
\mu = \sum_{n=1}^N\omega^n\delta_{u^n}\,,
\end{equation}
then we say $\mu$ is consistent with $\rho_\pos$, or $\mu\sim\rho_\pos$, if \eqref{eqn:consistent_is} holds true. \textcolor{black}{In some literature, this property is called unbiased sampling.}
\item Variance of the weights gives an indicator of the performance of the sampling algorithm, it measures how close each configuration of~\eqref{eqn:mu_ensemble_intro}, from one run of the algorithm, is to the true, i.e. we would like an algorithm so that
\begin{equation}\label{eqn:variance}
\EE\left|\sum^N_{n=1}\omega^nf(u^n)-\EE_{\rho_\pos}(f)\right|^2\quad \text{is small}.
\end{equation}
Once again the $\EE$ sign takes expectation over all possible configurations from the sampling algorithm. For a bounded test function $f$, if $\{(\omega^n,u^n)\}^N_{n=1}$ are i.i.d., then:
\begin{equation}\label{eqn:variance_w}
\begin{aligned}
\EE\left|\sum^N_{n=1}\omega^nf(u^n)-\EE_{\rho_\pos}(f)\right|^2&= \EE\sum^N_{n=1}\left|\omega^nf(u^n)-\frac{1}{N}\EE_{\rho_\pos}(f)\right|^2\\
&= N\EE\left|\left[\omega^1-\frac{1}{N}\right]f(u^1)+\frac{1}{N}f(u^1)-\frac{1}{N}\EE_{\rho_\pos}(f)\right|^2\\
&\leq\frac{2}{N}\mathrm{Var}(N\omega^1)\|f\|^2_{L^\infty}+\frac{2}{N}\EE|f(u^1)-\EE_{\rho_\pos}(f)|^2\\
&\leq\frac{2}{N}\mathrm{Var}(N\omega^1)\|f\|^2_{L^\infty}+\frac{8}{N}\|f\|^2_{L^\infty}\,,
\end{aligned}
\end{equation}
where we use i.i.d. in the second equality and $\EE(N\omega^1)=1$ by consistency. This means the variance of the weight, $\mathrm{Var}(N\omega^1)$, serves as a measure of the performance. If~\eqref{eqn:variance} is small, the algorithm is regarded as a good one. We note that some sampling algorithms cannot provide i.i.d. $\{w^n,u^n\}$ pairs, making the inequality~\eqref{eqn:variance_w} not exactly true. Nevertheless the variance of the weights in some sense quantifies how well each run of the experiment approximates the target posterior distribution.
\end{itemize}

There have been many successful algorithms developed in literature that aim at achieving these two properties. Our algorithms are built upon ideas from some of these methods, including ``Importance Sampling'' (IS) and ``Ensemble Kalman Inversion/Square Root Filter'' (EnKI/EnSRF), all three of which will be briefly recalled below and reviewed in more details in Section~\ref{sec:review}.

Importance Sampling is a rather standard technique: it involves assigning weights to particles so that an easy-to-be-sampled distribution can be turned into the target distribution. The weight is simply the ratio of the two. Regarding the two guiding principles, IS always achieves consistency, but it may give rise to high variance, especially when the easy-to-be-sampled and the target distribution are very different. Some approaches have been proposed to incorporate ``re-sampling" to reduce the variance, such as the strategies used in~\cite{Doucet2001,SMCBOOK,lu2019accelerating}. We do not discuss the details.

EnKI and EnSRF are very different. These two algorithms, both trace the origins to the Kalman filter, require the motion of the particles. They can be seen as the ``analysis step" of data assimilation.  Roughly speaking, the samples are generated from an easy-to-be-sampled distribution, and some dynamics is injected to move the samples around so that after finite time (usually $\text{Time}=1$) they look like i.i.d. samples from the posterior distribution. These two algorithms have completely the opposite properties, compared with IS. There are no weights involved at all, and each particles takes $w^n = \frac{1}{N}$, so the variance of weight is always $0$. However, they are not consistent. This is a disadvantage inherited from Ensemble Kalman Filter: ensemble Kalman filter highly relies on the Gaussianity assumption, that furthermore requires linearity of the forward map -- for nonlinear forward map, the sampling methods are not consistent, in the notion of~\eqref{eqn:consistent_is}.
  
Our goal in this work is to design algorithms by combining advantages of IS and EnKI/EnSRF. We rely on the introduction of the weights to achieve consistency, and the motion introduced in EnKI/EnSRF helps reducing the variance. In this way, we propose  the Weighted-Ensemble-Kalman-Inversion (WEnKI) and Weighted-Ensemble-Square-Root-Filter (WEnSRF) as weighted versions of the EnKI and EnSRF. They achieve consistency for general nonlinear forward maps. We also establish theoretical bounds of the weight variance for the proposed methods. In some sense, this work can be viewed as a correction to EnKI/EnSRF to ensure consistency and an improvement over IS in terms of reducing the weight variance. A natural question then is: how much improvement do we get? As a comparison to EnKI/EnSRF, this amounts to analyzing the strength of the weight term. This is a side product of the paper: by quantifying the differences between EnKI and WEnKI by estimating the weight term, we give a control of the error for EnKI when the forward map in nonlinear.

\textcolor{black}{We should emphasize that besides the two essential properties mentioned above that are theoretically important, there are a lot of practical concerns in implementing algorithms. For example, it would be ideal in real practical problems to design methods that are derivative free, and have low computational complexity. This partially explains the popularity of IS and EnKI. The algorithms are extremely simple, and no derivatives of $\mathcal{G}$ are needed. The proposed new algorithms in this paper fail badly in this dimension: the newly introduced weight terms not only depend on derivatives, but also have very complicated formulation, as will be shown in Section ~\ref{sec:weights}. Although theoretically they are indeed consistent and achieve low variance, such high computational complexity will render them being of little practical use. How to build on top the results obtained in this paper for a practical useful sampling method that also enjoy good theoretical properties will be explored in the near future.}

The rest of this paper is organized in the following: in Section~\ref{sec:review}, we give a brief review of the above mentioned three methods, Important Sampling, Ensemble Kalman Inversion, and Ensemble Square Root Filter. In Section~\ref{sec:weights} we propose our correction to EnKI and EnSRF with added weights. Proof of consistency and some discussion about the control of the variance of weights are presented in Section~\ref{sec:propertyofW}. In Section~\ref{numericalresult} we demonstrate numerical evidence. Some concluding remarks are presented at the end of the paper.

\section{Importance Sampling and ensemble Kalman filter}\label{sec:review}
We review a few sampling strategies in this section. In particular, the Importance Sampling that involves adding weights to the particles to achieve consistency, Ensemble Kalman Inversion and Ensemble Square Root Filter that involve adding motions to the particles so that samples are moved to represent the support of the target.

\subsection{Importance sampling}
The first sampling method we will discuss is the Importance Sampling \cite{IM1989}. It is a fundamental step in Sequential Monte Carlo Methods~\cite{Doucet2001,SMCBOOK}. The idea is extremely simple: one samples a certain amount of particles from the prior distribution, and weight is then calculated based on the ratio of the posterior and the prior evaluation, so the samples with adjusted weights reflect the posterior distribution. The algorithm is summarized in Algorithm~\ref{alg:IP}:
\begin{algorithm}[htb]
\caption{\textbf{Importance sampling}}\label{alg:IP}
\begin{algorithmic}
\State \textbf{Preparation:}
\State 1. Input: $N\gg1$; $\Gamma$; $\mathcal{G}$ (forward map) and $y$ (data).
\State 2. Initial: $\{u^n\}_{n=1}^N$ i.i.d. sampled from the initial distribution $\rho_\text{prior}$.
\State \textbf{Run: } 
1. Calculate the weight, for all $1\leq n\leq N$:
\[
\omega^{n,\ast}=\exp\{-\Phi(u^n;y)\}=\exp\left(-\frac{1}{2}\left|y-\mathcal{G}(u^n)\right|^2_\Gamma\right)\,;
\]

2. Normalize weight:
\[
\omega^n=\frac{\omega^{n,*}}{\sum^N_{n=1}\omega^{n,\ast}}\,.
\]
\State \textbf{Output:} $\{\omega^n\}^N_{n=1},\{u^n\}^N_{n=1}$.
\end{algorithmic}
\end{algorithm}

It is expected that the newly updated distribution is consistent with the target distribution:
\[
\sum_{n=1}^N\omega^n\delta_{u^n}\sim\rho_\pos
\]
in the sense that for any smooth test function $f$:
\begin{equation*}
\EE\left(\sum^N_{n=1}\omega^nf(u^n)\right)=\EE_{\rho_\pos}(f)\,.
\end{equation*}

However, the variance of the weights could be quite large, especially when $\rho_\pos$ and $\rho_\pri$ concentrate at different regions. According to the formulation of the method, this quantity can be explicitly computed:
\begin{equation}\label{chidistance}
\EE((\omega^n)^2)=\frac{1}{N^2}\int_{\mathbb{R}^L} \frac{\rho^2_\pos(u)}{\rho_\pri(u)}\rd u\,,\quad \text{and}\quad \text{Var}(N\omega) = \int_{\mathbb{R}^L} \frac{\rho^2_\pos(u)}{\rho_\pri(u)}\rd u -1\,.
\end{equation}
Thus, if $\rho_\pos$ is non-trivial is the region where $\rho_\pri$ almost vanishes, the quantity can be extremely big, leading to poor performance of the algorithm. Various re-sampling strategies have been proposed~\cite{bain2008fundamentals} to reduce the high variance.

\subsection{Ensemble Kalman filters}
At the other end of the spectrum of sampling method is to not adjust weights at all. Every particle takes equal weight $\frac{1}{N}$. Two typical examples are Ensemble Kalman Inversion and Ensemble Square Root Filter.

The link between sampling and the Kalman filter problem was drawn in an inspiring paper \cite{Reich2011}. Kalman filter (or its more practical version: ensemble Kalman filter) is a class of data assimilation methods that combine data (usually collected at discrete time) with some underlying guessed system dynamics an estimation of parameters in dynamical systems. The dynamics is ran till discrete time when data is collected, and Bayes' rule is applied to update the distribution of the unknown parameters. The paper views the application of the Bayes' rule as an action at a delta function in time, and by inserting a mollifier, the updating process becomes continuous in time.

Such idea was elaborated and formulated into a minimization strategy in \cite{Iglesias_2013}. In~\cite{SS}, the authors view the prior and the (modified) target distribution to be two functions on a function space, and designed a PDE that transforms one to another, either in finite time for the posterior distribution, or in the infinite time horizon for a delta function located at the minimizer. The sampling strategy is in some sense equivalent to the particle method for the PDE: the samples are drawn from the initial distribution, and follow the flow of the PDE by satisfying the associated coupled-ODE/SDE systems. The initial finite-time sampling method is termed ``Ensemble Kalman Inversion (EnKI)'' in~\cite{Iglesias_2013}, and some variations were developed that achieve the final distribution in infinite time, termed ``Ensemble Kalman Sampling (EKS)"~\cite{ding2019ensembleS,EKS,garbunoinigo2019affine}. Since there are no adjustment of weights, the variance of weights keep being $0$ throughout the dynamics. Indeed, upon the well-posedness results of the SDE obtained in~\cite{DCPS,SS}, in~\cite{ding2019ensemble} the authors proved, using the mean-field argument~\cite{CA_IZO_2011,Golse2016}, that when the forward map $\mathcal{G}$ is linear, the method provides approximately i.i.d. samples for the posterior distribution (with $N^{-1/2}$ error in $L_2$-Wasserstein metric).

However, both the derivation of the PDE, and the mean-field limit argument, highly rely on Gaussianity. The forward map is required to be linear for the arguments to carry through. This is not a surprising property since the method was originally derived from Ensemble Kalman Filter and thus inherits its strong requirement: the ``motion" of the particles only depend on the first two moments, and thus the method automatically fails when higher moments are necessary, as in the non-Gaussian case.

We describe both EnKI and EnSRF in details below.

\subsubsection{Ensemble Square Root filter}
The PDE for the ensemble square root filter (EnSRF) writes as the following:
\begin{equation}\label{eqn:ensrf_PDE}
\left\{
\begin{aligned}
&\partial_t\varrho(u,t)-\frac{1}{2}\nabla\cdot\left(\mathrm{Cov}^{\varrho(t)}_{up}\Gamma^{-1}\left(\mathcal{G}(u)+\overline{\MCG}^{\varrho(t)}-2y\right)\varrho\right)=0\,\\
&\varrho(u,0)=\rho_{\pri}
\end{aligned}
\right.\ ,
\end{equation}
where $\overline{\MCG}^{\varrho(t)},\mathrm{Cov}^{\varrho(t)}_{up}$ are expectation of $\mathcal{G}$ and the covariance of $(u,\mathcal{G}(u))$ in $\varrho(u,t)$:
\[
\overline{\MCG}^{\varrho(t)}= \int \mathcal{G}(u)\varrho(t)\rd u\,,\quad \mathrm{Cov}^{\varrho(t)}_{up} =  \int \left(u-\overline{u}\right)\otimes \left(\mathcal{G}(u)-\overline{G}\right)\varrho(t)\rd u\,.
\]
For this particular PDE, one can show that if $\mathcal{G}$ is linear, namely:
\begin{equation}\label{eqn:G_linear}
\mathcal{G}(u) = \mathsf{A}u +\mathsf{b}\,,
\end{equation}
the solution to the PDE~\eqref{eqn:ensrf_PDE} is the target posterior distribution at $t=1$:
\[
\varrho(u,1) =\rho_\pos\,.
\]
Noting that the PDE~\eqref{eqn:ensrf_PDE} is essentially an advection-type PDE, it is easy to formulate the ODE system satisfied by the particles by simply following the trajectory:
\begin{equation}\label{eqn:ODE_srt}
\frac{\rd }{\rd t}u^n_t=-\frac{1}{2}\mathrm{Cov}^{\varrho(t)}_{up}\Gamma^{-1}\left(\mathcal{G}(u^n_t)+\overline{\MCG}^{\varrho(t)}-2y\right)\,,
\end{equation}
with $\{u^n\}$, i.i.d. sampled from $\rho_\pri$ at $t=0$. Since the particles $\{u^n\}$ follow exactly the same flow as the PDE, it is straightforward to have, for $\forall t$:
\begin{equation*}
\frac{1}{N}\sum_j\delta_{u^n(t)}\approx\varrho(u,t)\,.
\end{equation*}
This approximation sign holds true in both weak sense, and in Wasserstein distance sense, for all $t\leq 1$:
\begin{itemize}
\item[--] Weak convergence, for all $f(u)$ bounded continuous :
\[
\EE\left(\int \left(\frac{1}{N}\sum^N_{n=1}\delta_{u^n(t)}-\varrho(u,t)\right) f(u)\rd{u}\right) = 0\,,
\]
and
\[
\EE\left(\int \left(\frac{1}{N}\sum^N_{n=1}\delta_{u^n(t)}-\varrho(u,t)\right) f(u)\rd{u}\right)^2 = \mathcal{O}(N^{-1})\,;
\]
\item[--] Convergence in $L_2$-Wasserstein:
\[
\EE\left(W_2\left(\frac{1}{N}\sum^N_{n=1}\delta_{u^n(t)}\,,\varrho(u,t)\right)\right) \to 0\,.
\]
\end{itemize}
Note that the rate of convergence in $L_2$-Wasserstein depends on the dimension. It is of $\mathcal{O}(N^{-1/2})$ if the dimension of $u$ is smaller than $4$. Details can be found in~\cite{Fournier2015}.

However, in the numerical experiment, since one does not have $\varrho(u,t)$, $\overline{\MCG}$ and $\mathrm{Cov}_{up}(t)$ are not available. In implementation these terms are replaced by the ensemble covariance and the ensemble mean:
\begin{align}\label{eqn:ensemble_variance}
\overline{\MCG}^{\varrho(t)}\to \overline{\MCG}^N(t)=\frac{1}{N}\sum^N_{n=1}\MCG(u^n(t))\,,\ \text{and}\quad 
\overline{u}(t)\to \overline{u}^N(t)=\frac{1}{N}\sum^N_{n=1}u^n(t)\,,
\end{align}
and
\[
\mathrm{Cov}^{\varrho(t)}_{up}\to \mathrm{Cov}^N_{up}(t)=\frac{1}{N}\sum^N_{n=1}\left(u^n(t)-\overline{u}^N(t)\right)\otimes \left(\MCG(u^n(t))-\overline{\MCG}^N(t)\right)\,.
\]
These replacements naturally bring error to realizations of~\eqref{eqn:ODE_srt}. To prove such error is small, the classical mean-field argument is ran. The full recipe of the algorithm is summarized in Algorithm~\ref{alg:ERF}.

\begin{algorithm}[htb]
\caption{\textbf{Ensemble Square Root filter}}\label{alg:ERF}
\begin{algorithmic}
\State \textbf{Preparation:}
\State 1. Input: $N\gg1$; $\Delta t\ll1$ (time step); $M=1/\Delta t$ (stopping index); $\Gamma$;  $\mathcal{G}$ (forward map) and $y$ (data).
\State 2. Initial: $\{u^n_0\}^N_{n=1}$ sampled from initial distribution $\rho_\text{prior}$.

\State \textbf{Run: } Set time step $m=0$;
\State \textbf{While} $m<M$:

1. Define empirical means and covariance:
\[
\overline{u}^N_m=\frac{1}{N}\sum^N_{n=1}u^n_m\,,\quad\overline{\MCG}^N_m=\frac{1}{N}\sum^N_{n=1}\MCG(u^n_m)
\]
and
\[
\mathrm{Cov}^N_{up}=\frac{1}{N}\sum^N_{n=1}\left(u^n_m-\overline{u}^N_m\right)\otimes \left(\MCG(u^n_m)-\overline{\MCG}^N_m\right)\,.
\]

2. Update (set $m\to m+1$)
\begin{equation}\label{eqn:update_ode_srt}
u^n_{m+1}=u^n_m-\frac{\Delta t}{2}\Cov^N_{up}\Gamma^{-1}\left(\mathcal{G}(u^n_{m})+\overline{\MCG}^N_m-2y\right)\,,\quad\forall 1\leq n\leq N\,.
\end{equation}
\State \textbf{end}
\State \textbf{Output:} $\{u^n_M\}^N_{n=1}$.
\end{algorithmic}
\end{algorithm}

It is clear in the algorithm,~\eqref{eqn:update_ode_srt} is simply the forward Euler solver applied on ODE~\eqref{eqn:ODE_srt} with time step being $h=1/M$, and the accuracy would be the standard $\mathcal{O}(h)$. The method was proposed in papers \cite{LIVINGS20081021,Reich2011,ESRF2003} as a data assimilation method. The idea behind the scene is rather simple. Suppose a large number of particles are sampled from a normal distribution $\mathcal{N}(\mu_1,\Sigma_1)$, and to form $\mathcal{N}(\mu_2,\Sigma_2)$, one merely needs to adjust $u^n$ to a new location:
\begin{equation}\label{eqn:gaussian_formulation}
u^n\to \Sigma_2^{1/2}\Sigma_1^{-1/2}(u^n-\mu_1)+\mu_2\,.
\end{equation}
The newly formulated particles are then i.i.d. drawn from $N(\mu_2,\Sigma_2)$. The ODE~\eqref{eqn:ODE_srt} is the continuous in time version of this motion. It is immediate that since only the information of the first two moments is used, Gaussianity is crucial, meaning for consistency, the forward map $\mathcal{G}$ is necessary to be linear.

\subsubsection{Ensemble Kalman Inversion}
A similar approach is used to derive another sampling method called Ensemble Kalman Inversion~\cite{Evensen:2006:DAE:1206873,Reich2011}. The corresponding PDE is the following:
\begin{equation}\label{eqn:eki_PDE}
\left\{
\begin{aligned}
&\partial_t\varrho(u,t)+\nabla_u\cdot\left(\left(y-\mathcal{G}(u)\right)^\top\Gamma^{-1}\mathrm{Cov}^{\varrho(t)}_{pu}\varrho\right)=\frac{1}{2}\mathrm{Tr}\left(\mathrm{Cov}^{\varrho(t)}_{up}\Gamma^{-1}\mathrm{Cov}^{\varrho(t)}_{pu}\mathcal{H}_u\varrho\right)\\
&\varrho(u,0)=\rho_{\pri}
\end{aligned}
\right.\,,
\end{equation}
where $\mathrm{Cov}^{\varrho(t)}_{up}$, and $\mathrm{Cov}^{\varrho(t)}_{pu}$ are covariance of $(u,\mathcal{G})$ and $(\mathcal{G},u)$ in $\varrho(u,t)$. $\mathcal{H}_u\varrho$ is the Hessian of $\varrho$. In~\cite{ding2019ensemble} the authors showed that the solution to the PDE reconstructs the posterior distribution in finite time:
\begin{equation}\label{eqn:reconstruct_eki}
\varrho(t=1,u) = \rho_\pos
\end{equation}
if the forward map $\mathcal{G}$ is linear~\eqref{eqn:G_linear}. So the PDE provides a smooth path to transform the prior distribution to the target in the linear setting.

On the particle level, by following the trajectory of this PDE one has the following SDEs:
\begin{equation}\label{eqn:SDE_eki}
\rd u^n_t=\mathrm{Cov}^{\varrho(t)}_{up}\Gamma^{-1}\left(y-\MCG(u^n_t)\right)dt+\mathrm{Cov}^{\varrho(t)}_{up}\Gamma^{-1/2}\rd W^n_t\,,\quad n = 1\,,\cdots N\,,
\end{equation}
where $dW^n_t$ is the Brownian motion. In the implementation of this SDE, since $\varrho$ is not available, the covariance matrices need to be replaced by the ensemble versions, as is done in~\eqref{eqn:ensemble_variance}, meaning, in the real computation, we use the following coupled SDEs:
\begin{equation}\label{eqn:SDE_eki_couple}
\rd u^n_t=\mathrm{Cov}^N_{up}(t)\Gamma^{-1}\left(y-\MCG(u^n_t)\right)dt+\mathrm{Cov}^N_{up}(t)\Gamma^{-1/2}\rd W^n_t\,,\quad n = 1\,,\cdots N\,,
\end{equation}
where $\mathrm{Cov}^N_{up}$ is the ensemble covariance matrix. Finally we use the ensemble distribution
\[
\frac{1}{N}\sum^N_{n=1}\delta_{u^n(t)}\approx\varrho(u,t)\,,
\]
to approximate the PDE solution.

The discrete version of the coupled SDE~\eqref{eqn:SDE_eki} formulates Algorithm~\ref{alg:EKI}. It is apparent that~\eqref{eqn:update_ujn} is simply the Euler-Maruyama method for~\eqref{eqn:SDE_eki}, as rigorously justified in~\cite{Schilling2018,lange2019continuous}.
\begin{algorithm}[htb]
\caption{\textbf{Ensemble Kalman Inversion}}\label{alg:EKI}
\begin{algorithmic}
\State \textbf{Preparation:}
\State 1. Input: $N\gg1$; $\Delta t\ll1$ (time step); $M=1/\Delta t$ (stopping index); $\Gamma$;  $\mathcal{G}$ (forward map) and $y$ (data).
\State 2. Initial: $\{u^n_0\}^N_{n=1}$ sampled from initial distribution $\rho_\text{prior}$.

\State \textbf{Run: } Set time step $m=0$;
\State \textbf{While} $m<M$:

1. Define empirical means and covariance:
\begin{align}\label{eqn:ensemble_alg}
\overline{u}^N_m=\frac{1}{N}\sum^N_{n=1}u^n_m\,,\quad&\text{and}\quad \mathrm{Cov}^N_{up}=\frac{1}{N}\sum^N_{n=1}\left(u^n_m-\overline{u}^N_m\right)\otimes \left(\MCG(u^n_m)-\overline{\MCG}^N_m\right)\,,\nonumber\\
\overline{\MCG}^N_m=\frac{1}{N}\sum^N_{n=1}\MCG(u^n_m)\,,\quad&\text{and}\quad\mathrm{Cov}^N_{pp}=\frac{1}{N}\sum^N_{n=1}\left(\MCG(u^n_m)-\overline{\MCG}^N_m\right)\otimes \left(\MCG(u^n_m)-\overline{\MCG}^N_m\right)\,.
\end{align}

2. Artificially perturb data (with $\xi^n_{m+1}$ drawn $i.i.d.$ from $\mathcal{N}(0,(\Delta t)^{-1}\Gamma)$):
\begin{equation}\label{eqn:perturbyj}
y^n_{m+1}=y+\xi^n_{m+1},\quad n=1,\dots,N\,.
\end{equation}

3. Update (set $m\to m+1$)
\begin{equation}\label{eqn:update_ujn}
u^n_{m+1}=u^n_m+\mathrm{Cov}^N_{up}\left(\mathrm{Cov}^N_{pp}+(\Delta t)^{-1}\Gamma\right)^{-1}\left(y^n_{m+1}-\MCG(u^n_m)\right)\,,\quad\forall 1\leq n\leq N\,.
\end{equation}
\State \textbf{end}
\State \textbf{Output:} $\{u^n_M\}^N_{n=1}$.
\end{algorithmic}
\end{algorithm}

\textcolor{black}{The EKI method was initially proposed in \cite{Iglesias_2013}, as a further development of~\cite{Reich2011}, to find optimized parameter for inverse problem. Then continuous limit for the discretization in time was considered in \cite{SS} where the authors first wrote down and analyzed the SDE system~\eqref{eqn:SDE_eki}. The well-posedness of this SDE system was shown~\cite{ding2019ensemble}. The wellposedness of the new SDE system~\eqref{eqn:SDE_eki_couple}, which has the covariance replaced by its ensemble version, was proved in~ \cite{DCPS,Schilling2018}. In \cite{ding2019ensemble} the authors showed the mean-field limit of the new coupled SDE system \eqref{eqn:SDE_eki_couple} is the PDE \eqref{eqn:eki_PDE} as $N\to\infty$ (in $L_2$-Wasserstein sense).}

\textcolor{black}{However, we would like to emphasize that in~\cite{ding2019ensemble} it was shown the PDE provides the target distribution only in the linear setting while such convergence holds true for the relatively general weakly nonlinear case. Defending on the perspective, this is in fact a \emph{negative} result for the nonlinear case: \emph{the target distribution is not the solution to the PDE, but the method nevertheless presents the flow to the PDE, so the method does not give a consistent sampling of the target distribution.}}

We also note that often in time, people view EnKI as an optimization algorithm instead of a sampling algorithm, and some relaxation terms have been added for convergence to the minimizer~\cite{chada2019tikhonov,chada2019convergence}.

\subsection{Summary}
It is rather clear that in IS, the particles are kept in the original location, and one merely adjusts the weights. This guarantees the consistency, namely,~\eqref{eqn:consistent_is} always holds true for all bounded continuous functions. On the other hand, since the particles do not move, the weights could be largely suppressed or enlarged, leading to large variance of the weights even in the Gaussian case.

On the contrary, the later two algorithms, EnSRF and EnKI, move particles around to adjust the change of center and the variance. Since all particles are equally weighted, the variance of weight is kept at $0$. However, the derivation of both methods assumes the Gaussianity, and thus the consistency fails for the nonlinear forward map.

\section{Weighted Ensemble Kalman Inversion and Square root filter}\label{sec:weights}
Our proposed algorithms combine the advantages of IS and EnKI/EnSRF, by including both weight and particle dynamics simultaneously, so that we guarantee the consistency at the expense of fairly small variance. The output of the algorithms would be an ensemble distribution having the format of
\begin{equation}\label{eqn:mu_ensemble}
\mathrm{M}_{\text{en}} = \sum_{n=1}^{N}w^n\delta_{u^n}\,,
\end{equation}
as an approximation to the target distribution $\rho_\pos$.

We call the proposed algorithms weighted-EnKI (WEnKI) and weighted-EnSRF (WEnSRF). As the names suggest, we largely keep the format of the flow (or the PDE) for EnKI and EnSRF, while we also add weights to achieve consistency. The underlying flow is designed so that the PDE solution provides a linear interpolation on the log-scale in a time parameter $t$, from the prior to the posterior distributions~\cite{Reich2011, SS}:
\begin{equation}\label{eqn:rho_t}
\rho(u,t) = \frac{1}{Z(t)}\exp\{-t\Phi(u;y)\}\rho_\pri(u)\,, \qquad t \in [0, 1]\,,
\end{equation}
\textcolor{black}{where $\Phi$ is the least-squares function defined in \eqref{eqn:phi}} and $Z(t)$ is a function in time to normalize $\rho(u,t)$ so that
\[
\int\rho(u,t)\rd{u} = 1\,,\qquad \forall t\,.
\]
It is clear that
\[
\rho(u,0) = \rho_\pri\,,\quad\rho(u,1) = \rho_\pos\,,
\]
so the definition~\eqref{eqn:rho_t} provides a flow from the prior to the target posterior distribution. The prior distribution in our algorithm can be quite flexible, for example,
\[
\rho_\pri(u)=\frac{1}{Z}\exp(-V(u))
\]
for a $C^2$ function $V(u)$, with $Z$ being the normalization factor. In practice, however, the prior distribution needs to be an distribution that is easy to sample, so for now we assume:
\begin{equation}\label{eqn:prior}
\rho_\pri= \mathcal{N}(u_0,\Gamma_0)\,.
\end{equation}

The strategy we follow is divided into two steps:
\begin{itemize}
    \item[Step 1:] adjust the PDE~\eqref{eqn:ensrf_PDE} and~\eqref{eqn:eki_PDE} by adding weights so that~\eqref{eqn:rho_t} is a strong solution;
    \item[Step 2:] design a corresponding particle system that carries out the flow of the PDE.
\end{itemize}

Before diving into details of the algorithms, we first introduce some notations. \textcolor{black}{A straightforward but somewhat tedious calculation (see Appendix \ref{Derivetiveofrho}) yields for $\rho(u,t)$ defined in~\eqref{eqn:rho_t}:}
\begin{align}
&\partial_t \rho(u,t)=\left[-\frac{1}{2}\left|y-\mathcal{G}(u)\right|^2_\Gamma+\EE_{\rho(t)}\Bigl(\frac{1}{2}\left|y-\mathcal{G}(u)\right|^2_\Gamma\Bigr)\right]\rho(u,t)\,,\label{partialtmu1}\\
&\nabla \rho(u,t)=\mathcal{V}(u,t)\rho(u,t)\,,\label{partialxmu}\\
&\mathcal{H}_u \rho(u,t)=\left[\mathcal{V}(u,t)\mathcal{V}^\top(u,t)-t\left(\nabla \MCG\right)^\top\Gamma^{-1}\nabla \MCG-\Gamma^{-1}_0+t\mathcal{W}(u)\right]\rho(u,t)\,,\label{partialxxmu}
\end{align}
where $\mathcal{H}_u$ denotes the Hessian with respect to $u$, and  $\mathcal{V}\in\mathbb{R}^{L\times1}$, $\mathcal{W}\in\mathbb{R}^{L\times L}$ are defined as
\begin{align}\label{vu}
& \mathcal{V}(u,t)=t\left(\nabla \MCG(u)\right)^\top\Gamma^{-1}\left(y-\MCG(u)\right)-\Gamma^{-1}_0\left(u-u_0\right)\,, \\
\label{eqn:Wu}
& \mathcal{W}(u)=\left[(\partial_1\nabla\MCG(u))^\top\Gamma^{-1}(y-\MCG(u))\,, \cdots\,, (\partial_L\nabla\MCG(u))^\top\Gamma^{-1}(y-\MCG(u))\right]\,.
\end{align}

\subsection{Weighted ensemble square root filter (WEnSRF)}\label{sec:WEnSRF}
Calculating the left hand side of \eqref{eqn:ensrf_PDE}
using the identities \eqref{partialtmu1}-\eqref{partialxxmu}, we arrive at the PDE that $\rho$, defined in~\eqref{eqn:rho_t}, satisfies:
\begin{equation}\label{eqn:wensrf_PDE}
\partial_t\varrho(u,t)-\frac{1}{2}\nabla\cdot\left(\mathrm{Cov}^{\varrho(t)}_{up}\Gamma^{-1}\left(\mathcal{G}(u)+\overline\MCG^{\varrho(t)}-2y\right)\varrho\right)=\left[\mathcal{P}_1(u,t)+\mathcal{P}_2(u,t)\right]\varrho\,,
\end{equation}
where
\begin{equation}\label{R12}
\begin{aligned}
\mathcal{P}_1(u,t)=&\frac{1}{2}\left(\left|y-\overline\MCG^{\varrho(t)}\right|_{\Gamma}-\left|y-\MCG(u)\right|_{\Gamma}\right)+\frac{1}{2}\mathrm{Tr}\left\{\Cov_{pp}^{\varrho(t)}\Gamma^{-1}\right\}\,,\\
\mathcal{P}_2(u,t)=&-\frac{1}{2}\mathrm{Tr}\left\{\mathrm{Cov}^{\varrho(t)}_{up}\Gamma^{-1}\nabla G(u)\right\}-\frac{1}{2}\mathcal{V}^\top(u,t)\mathrm{Cov}_{up}^{\varrho(t)}\Gamma^{-1}\left(\mathcal{G}(u)+\overline\MCG^{\varrho(t)}-2y\right)
\end{aligned}
\end{equation}
with shorthand notations
\begin{equation}\label{eqn:mean_weights_rho}
\begin{aligned}
&\overline{u}^{\varrho(t)} = \mathbb{E}_{\varrho(t)}(u)\,,\quad\overline{\MCG}^{\varrho(t)} = \mathbb{E}_{\varrho(t)}(\MCG)\,,\\
&\mathrm{Cov}^{\varrho(t)}_{uu} =\mathbb{E}_{\varrho(t)}\left(\left(u-\overline{u}^{\varrho(t)}\right)\otimes \left(u-\overline{u}^{\varrho(t)}\right)\right)\,,\\
&\mathrm{Cov}^{\varrho(t)}_{up} =\mathbb{E}_{\varrho(t)}\left(\left(u-\overline{u}^{\varrho(t)}\right)\otimes\left(\mathcal{G}-\overline{\MCG}^{\varrho(t)}\right)\right)\,,\\
&\mathrm{Cov}^{\varrho(t)}_{pp} =\mathbb{E}_{\varrho(t)}\left(\left(\mathcal{G}-\overline{\MCG}^{\varrho(t)}\right)\otimes\left(\mathcal{G}-\overline{\MCG}^{\varrho(t)}\right)\right)\,.
\end{aligned}
\end{equation}

According to the derivation, it is a natural expectation that $\rho$ is a strong solution. We will further show that the ensemble distribution of particles generated by the sampling method gives a weak solution to the PDE.

The PDE \eqref{eqn:wensrf_PDE} can be solved using standard method of characteristics, which gives arise to the following coupled ODE system for the particles, with $u^n_t$ denoting the location of the $n$-th particle at time $t$, and $w^n_t$ the associated weight:
\begin{equation}\label{eqn:wensrf_SDE}
\left\{
\begin{aligned}
&\rd u^n_t=-\frac{1}{2}\mathrm{Cov}^{\varrho(t)}_{up}\Gamma^{-1}\left(\mathcal{G}(u^n_t)+\overline\MCG^{\varrho(t)}-2y\right)\rd t\\
&\rd w^n_t=\bigl(\mathcal{P}_1(u^n_t,t)+\mathcal{P}_2(u^n_t,t)\bigr)w^n_t\rd t
\end{aligned}
\right.\,.
\end{equation}
The initial condition is chosen so that $\{u^n_0\}^N_{n=1}$ is i.i.d.~sampled from $\rho_\pri(u)\rd u$ and $w^n_0=1/N$, $n = 1, \ldots, N$, to represent initial data $\varrho(u,0) = \rho_\pri$. \textcolor{black}{The system is decoupled, in the sense that when the underlying $\varrho(t)$ is give, each $\{u^n,w^n\}$ pairs are independent from each other.} The output of the algorithm is the empirical distribution:
\begin{equation}\label{eqn:empirical_srf}
\mathrm{M}_{u_t}(u)=\sum^N_{n=1}w^n_t\delta_{u^n_t}\,.
\end{equation}

\textcolor{black}{In practice, however, $\varrho(t)$ is not known and thus $\mathrm{Cov}^{\varrho(t)}$ and $\overline\MCG^{\varrho(t)}$ in~\eqref{eqn:wensrf_SDE} have to be replaced by the ensemble versions:}
\begin{equation}\label{eqn:ensemble_mean}
\overline u^{\varrho(t)}\to\overline u^N(t)=\sum^N_{n=1}w^n(t)u^n(t)\,,\quad \overline\MCG^{\varrho(t)}\to\overline\MCG^N(t)=\sum^N_{n=1}w^n(t)\MCG(u^n(t))\,
\end{equation}
and
\begin{equation}\label{eqn:ensemble_cov}
\mathrm{Cov}^{\varrho(t)}_{up}\to\mathrm{Cov}^N_{up}(t)=\sum^N_{n=1}w^n(t)\left(u^n(t)-\overline{u}^N(t)\right)\otimes \left(\MCG(u^n(t))-\overline{\MCG}^N(t)\right)\,.
\end{equation}

This replacement makes the SDE system tangled up. We summarize the method in Algorithm~\ref{alg:MERF}. Note that due to numerical error, it is typically hard to keep the summation of the weight $1$, and numerically one performs normalization at each time step. Some properties of the method such as the consistency and the boundedness of the variance will be shown in Section~\ref{sec:propertyofW}.

\begin{algorithm}[htb]
\caption{\textbf{Weighted Ensemble Square Root Filter}}\label{alg:MERF}
\setstretch{1}
\begin{algorithmic}
\State \textbf{Preparation:}
\State 1. Input: $N\gg1$; $\Delta t\ll1$ (time step); $M=1/\Delta t$ (stopping index); $\Gamma$; $\mathcal{G}$ (forward map) and $y$ (data).
\State 2. Initial: $\{u^n_0\}^N_{n=1}$ sampled from initial distribution $\rho_\text{prior}$. $\{w^n_0=\frac{1}{N}\}^N_{n=1}$ initial weight.   
\State \textbf{Run: } Set time step $m=0$;
\State \textbf{While} $m<M$:

1. Define empirical means and covariance:
\[
\overline{u}^N_m=\frac{1}{N}\sum^N_{n=1}u^n_m\,,\quad\overline{\MCG}^N_m=\frac{1}{N}\sum^N_{n=1}\MCG(u^n_m),
\]
and
\[
\mathrm{Cov}^N_{up}=\frac{1}{N}\sum^N_{n=1}\left(u^n_m-\overline{u}^N_m\right)\otimes \left(\MCG(u^n_m)-\overline{\MCG}^N_m\right).
\]

2. Update parameters:
\[
\begin{aligned}
\mathcal{V}(u^n_m,t_m)=&t_m\left(\nabla \MCG(u^n_m)\right)^\top\Gamma^{-1}\left(y-\MCG(u^n_m)\right)-\Gamma^{-1}_0\left(u^n_m-u_0\right)\,,\\
\mathcal{P}^n_{m,1}=&\frac{1}{2}\left(\left|y-\overline{\MCG}^N_m\right|_\Gamma-\left|y-\MCG(u^n_m)\right|_\Gamma+\mathrm{Tr}\left\{\Cov_{pp}^N\Gamma^{-1}\right\}\right)\,,\\
\mathcal{P}^n_{m,2}=&-\frac{1}{2}\mathrm{Tr}\left\{\mathrm{Cov}_{up}^N\Gamma^{-1}\nabla G(u^n_m)\right\}\\
&-\frac{1}{2}\mathcal{V}^\top(u^n_m,t_m)\mathrm{Cov}_{up}^N\Gamma^{-1}\left(\mathcal{G}(u^n_m)+\overline{\MCG}^N_m-2y\right)\,.
\end{aligned}
\]

3. Update (set $m\to m+1$): for all $1\leq n\leq N$:
\begin{equation*}\label{eqn:update_ujns}
\begin{aligned}
u^n_{m+1}&=u^n_m-\frac{\Delta t}{2}\Cov^N_{up}\Gamma^{-1}\left(\mathcal{G}(u^n_m)+\overline{\MCG}^N_m-2y\right)\,,\\
w^{n,*}_{m+1}&=w^n_m\exp\left(\Delta t\left(\mathcal{P}^n_{m,1}+\mathcal{P}^n_{m,2}\right)\right)\,,\\
w^n_{m+1}&=\frac{w^{n,*}_{m+1}}{\sum^N_{n=1}w^{n,*}_{m+1}}\,.
\end{aligned}
\end{equation*}
\State \textbf{end}
\State \textbf{Output:} $\{w^n_M\}^N_{n=1}$,$\{u^n_M\}^N_{n=1}$.
\end{algorithmic}
\end{algorithm}

\subsection{Weighted Ensemble Kalman Inversion (WEnKI)}\label{sec:WEnKI}
\textcolor{black}{The same strategy can be applied to modify EnKI to deal with nonlinearity.} Substituting \eqref{partialtmu1}-\eqref{partialxxmu} into \eqref{eqn:eki_PDE}, we have
\begin{equation}\label{eqn:weki_PDE}
\partial_t\varrho(u,t)+\mathcal{L}\left[\varrho\right]=\left[\mathcal{R}_1(u,t)+\mathcal{R}_2(u,t)+\mathcal{R}_3(u,t)\right]\varrho(u,t)\,,
\end{equation}
where $\mathcal{L}$ is a linear operator inherited from~\eqref{eqn:eki_PDE}:
\begin{equation}\label{linearop}
\mathcal{L}\left[\varrho\right]=\nabla_u\cdot\left(\left(y-\mathcal{G}(u)\right)^\top\Gamma^{-1}\mathrm{Cov}^{\varrho(t)}_{pu}\varrho\right)-\frac{1}{2}\mathrm{Tr}\left(\mathrm{Cov}^{\varrho(t)}_{up}\Gamma^{-1}\mathrm{Cov}^{\varrho(t)}_{pu}\mathcal{H}_u(\varrho)\right)\,,
\end{equation}
and the remaining terms $\mathcal{R}_1,\mathcal{R}_2,\mathcal{R}_3$ are given by 
\begin{equation}\label{R1}
\begin{aligned}
\mathcal{R}_1(u,t)=&\frac{1}{2}\mathrm{Tr}\left\{\Cov_{pp}^{\varrho(t)}\Gamma^{-1}-2\left(\nabla \MCG(u)\right)^\top\Gamma^{-1}\Cov_{pu}^{\varrho(t)}\right\}\\
&+\frac{1}{2}\mathrm{Tr}\left\{\Cov_{up}^{\varrho(t)}\Gamma^{-1}\Cov_{pu}^{\varrho(t)}\left[t\left(\nabla \MCG(u)\right)^\top\Gamma^{-1}\nabla \MCG(u)+\Gamma^{-1}_0\right]\right\},\\
\mathcal{R}_2(u,t)=&\frac{1}{2}\left|y-\overline{\MCG}^{\varrho(t)}\right|_\Gamma-\frac{1}{2}\left|y-\MCG(u)-\Cov_{pu}^{\varrho(t)}\mathcal{V}(u,t)\right|_\Gamma\,,\\
\mathcal{R}_3(u,t)=&-\frac{t}{2}\mathrm{Tr}\left\{\mathrm{Cov}_{up}^{\varrho(t)}\Gamma^{-1}\mathrm{Cov}_{pu}^{\varrho(t)}\mathcal{W}(u)\right\}\,,
\end{aligned}
\end{equation}
where $\Cov^{\varrho}_{pp}$, $\Cov^{\varrho}_{up}$, and $\Cov^{\varrho}_{pu}$ are the corresponding covariance matrices, as defined in~\eqref{eqn:mean_weights_rho}. Similar to WEnSRF, we arrive at the following decoupled SDE system:
\begin{equation}\label{eqn:weki_SDE}
\left\{
\begin{aligned}
&\rd u^n_t=\mathrm{Cov}^{\varrho(t)}_{up}\Gamma^{-1}\left(y-\MCG(u^n_t)\right)dt+\mathrm{Cov}^{\varrho(t)}_{up}\Gamma^{-1/2}\rd W^n_t\\
&\rd w^n_t=\bigl(\mathcal{R}_1(u^n_t,t)+\mathcal{R}_2(u^n_t,t)+\mathcal{R}_3(u^n_t,t)\bigr)w^n_t\rd t
\end{aligned}
\right.\,,
\end{equation}
where the Brownian motion is introduced for the second order term in $\mathcal{L}$.
The initial condition is chosen so that $\{u^n_0\}^N_{n=1}$ is i.i.d.~sampled from $\rho_\pri(u)$ and $w^n_0=1/N$, $n = 1, \ldots, N$.

Since $\varrho$ is unknown, as in~\eqref{eqn:ensemble_mean}-\eqref{eqn:ensemble_cov}, we once again replace the true covariance by the ensemble version, and define empirical distribution accordingly:
\begin{equation}\label{ed:weki}
\mathrm{M}_{u_t}(u)=\sum^N_{n=1}w^n_t\delta_{u^n_t}\,.
\end{equation}

There are two sources of randomness involved in WEnKI: the initial sampling and the Brownian motion in~\eqref{eqn:weki_SDE}. Let $\Omega$ be the sample space and $\MCF_0$ be the $\sigma$-algebra: $\sigma\left(u^n(t=0),1\leq n\leq N\right)$, then the filtration is introduced by the dynamics:
\begin{equation*}
\MCF_t=\sigma\left(u^n(t=0),W^n_s,1\leq n\leq N,s\leq t\right)\,.
\end{equation*}
It can be shown the SDE is well-posed in this $\sigma$-algebra~\cite{DCPS,ding2019ensemble}. In next section, we will prove that the empirical distribution is consistent with $\rho(u,t)$ defined in \eqref{eqn:rho_t} under the expectation sense in $\MCF_t$. We will also give control to the variance. The method is summarized in Algorithm~\ref{alg:MEKI}. As in the previous algorithm, the numerical error induces $\sum_nw^n\neq 1$, and an extra renormalization is conducted.

\begin{algorithm}[htb]
\caption{\textbf{Weighted Ensemble Kalman Inversion}}\label{alg:MEKI}
\setstretch{1}
\begin{algorithmic}
\State \textbf{Preparation:}
\State 1. Input: $N\gg1$; $\Delta t\ll1$ (time step); $M=1/\Delta t$ (stopping index); $\Gamma$; $\mathcal{G}$ (forward map) and $y$ (data).
\State 2. Initial: $\{u^n_0\}^N_{n=1}$ sampled from initial distribution $\varrho_\text{prior}$. $\{w^n_0=\frac{1}{N}\}^N_{n=1}$ initial weight.                
\State \textbf{Run: } Set time step $m=0$;
\State \textbf{While} $m<M$:
1. Define empirical means and covariance:
\begin{align*}\label{eqn:ensemble_alg2}
\overline{u}^N_m&=\frac{1}{N}\sum^N_{n=1}w^n_mu^n_m\,,\quad\text{and}\quad\overline{\MCG}^N_m=\frac{1}{N}\sum^N_{n=1}w^n_m\MCG(u^n_m)\,,\nonumber\\
\mathrm{Cov}^N_{pp}&=\frac{1}{N}\sum^N_{n=1}w^n_m\left(\MCG(u^n_m)-\overline{\MCG}^N_m\right)\otimes \left(\MCG(u^n_m)-\overline{\MCG}^N_m\right)\,,\\
\mathrm{Cov}^N_{up}&=\frac{1}{N}\sum^N_{n=1}w^n_m\left(u^n_m-\overline{u}^N_m\right)\otimes \left(\MCG(u^n_m)-\overline{\MCG}^N_m\right)\,.
\end{align*}

2. Define first and second derivative:
\[
\begin{aligned}
\mathcal{V}(u^n_m,t_m)&=t_m\left(\nabla \MCG(u^n_m)\right)^\top\Gamma^{-1}\left(y-\MCG(u^n_m)\right)-\Gamma^{-1}_0\left(u^n_m-u_0\right)\,,\\
\mathcal{W}(u^n_m)&=\left[\partial_1\nabla\MCG(u^n_m)\Gamma^{-1}(y-\MCG(u^n_m))\quad \cdots\quad \partial_L\nabla\MCG(u^n_m)\Gamma^{-1}(y-\MCG(u^n_m))\right]\,.
\end{aligned}
\]

3. Define updated parameter:
\begin{align*}
\mathcal{R}^n_{m,1}=&\frac{1}{2}\mathrm{Tr}\left\{\Cov^N_{pp}\Gamma^{-1}-2\Cov^N_{up}\Gamma^{-1}\nabla \MCG(u^n_m)\right.\\
&\left.+\Cov^N_{up}\Gamma^{-1}\Cov^N_{pu}\left[t_m\left(\nabla \MCG(u^n_m)\right)^\top\Gamma^{-1}\nabla \MCG(u^n_m)+\Gamma^{-1}_0\right]\right\}\,,\\
\mathcal{R}^n_{m,2}=&\frac{1}{2}\left|y-\overline{\MCG}^N_m\right|_\Gamma-\frac{1}{2}\left|y-\MCG(u^n_m)-\Cov^N_{pu}\mathcal{V}(u^n_m,t_m)\right|_\Gamma\,,\\
\mathcal{R}^n_{m,3}=&-\frac{t_m}{2}\mathrm{Tr}\left\{\mathrm{Cov}^N_{up}\Gamma^{-1}\mathrm{Cov}^N_{pu}\mathcal{W}(u^n_{m})\right\}\,.
\end{align*}

4. Artificially perturb data (with $\xi^n_{m+1}$ drawn $i.i.d.$ from $\mathcal{N}(0,(\Delta t)^{-1}\Gamma)$):
\[
y^n_{m+1}=y+\xi^n_{m+1},\quad n=1,\dots,N\,.
\]

5. Update (set $m\to m+1$): for all $n$:
\begin{equation*}\label{eqn:update_ujn2}
\begin{aligned}
u^n_{m+1}&=u^n_m+\mathrm{Cov}^N_{up}\left(\mathrm{Cov}^N_{pp}+(\Delta t)^{-1}\Gamma\right)^{-1}\left(y^n_{m+1}-\MCG(u^n_m)\right)\,,\\
w^{n,*}_{m+1}&=w^n_m\exp\left(\Delta t\left(\mathcal{R}^n_{m,1}+\mathcal{R}^n_{m,2}+\mathcal{R}^n_{m,3}\right)\right)\,,\\
w^n_{m+1}&=\frac{w^{n,*}_{m+1}}{\sum^N_{n=1}w^{n,*}_{m+1}}\,.
\end{aligned}
\end{equation*}
\State \textbf{end}
\State \textbf{Output:} $\{w^n_M\}^N_{n=1}$,$\{u^n_M\}^N_{n=1}$.
\end{algorithmic}
\end{algorithm}

\begin{remark}\label{rmk:WEnKF}
It is important to note that the method is different from running EnKI to time $t=1$ and then apply Important Sampling. The latter was proposed in~\cite{WEnKF} as a weighted version of Ensemble Kalman Filter~\cite{Evensen:2006:DAE:1206873}, known as the Weighted Ensemble Kalman Filter (WEnKF).

Define the conditional mean 
\begin{equation}\label{eqn:wEnkf_mean}
\EE(u^n \mid u^n_0)=u^n_0+\mathrm{Cov}_{up}(u^n_0)\left(\mathrm{Cov}_{pp}(u^n_0)+\Gamma\right)^{-1}\left(y-\MCG(u^n_0)\right)\,,
\end{equation}
and the conditional covariance:
\[
\mathrm{Cov}(u^n \mid u^n_0)=\mathrm{Cov}_{up}(u^n_0)\left(\mathrm{Cov}_{pp}(u^n_0)+\Gamma\right)^{-1}\Gamma\left(\mathrm{Cov}_{pp}(u^n_0)+\Gamma\right)^{-\top}\mathrm{Cov}_{pu}(u^n_0)\,.
\]
In WEnKF, the particle weight is updated according to:
\begin{equation}\label{weightupdatewenkf}
\omega^n=\frac{1}{N}\times\frac{\rho_\pos(u^n)}{\mathcal{N}\left(u^n;\EE(u^n|u^n_0),\mathrm{Cov}(u^n|u^n_0)\right)}\,,
\end{equation}
where $u^n_0$ are the initial samples according to the prior distribution, and $\mathcal{N}$ is the density of Gaussian distribution centered at the conditional mean $\EE(u^n|u^n_0)$. The major difference, compared with the one we propose in~\eqref{eqn:weki_SDE}, is that the covariance used in~\eqref{eqn:wEnkf_mean} is calculated completely from the initial data. The updates along the evolution is entirely ignored. The updating formula in~\eqref{eqn:weki_SDE}, however, involves the weights that evolve in time and is closer to the PDE solution~\eqref{eqn:wensrf_PDE}.
\end{remark}

\textcolor{black}{\begin{remark}
We admit that the added weight terms are quite complicated, both in WEnKI and WEnSRF. In terms of the computational complexity, the new algorithms are far from being ideal. However, if we stick to the flow introduced by EnKI and EnSRF, it seems to difficult to avoid adding some cost to make the algorithm consistent.  Another route to improve the computation is to modify the flow itself, see \cite{reich2019fokkerplanck}. For example, we can involve derivatives in the flow by changing $\Cov_{up}$ to $\Cov_{uu}(\nabla \mathcal{G}(u))^\top$. This potentially could lead to a better flow of the particles, and may potentially provide a less complicated weight term. We leave these to future works. 
\end{remark}
}

\section{Properties of WEnKI and WEnSRF}\label{sec:propertyofW}
We establish a few important properties of WEnKI and WEnSRF in this section. As argued in Section~\ref{sec:review}, the two guiding principles for the algorithm-design is  consistency and  small variance of the weights. These two properties are presented in \S\ref{sec:consistency} and \S\ref{sec:variance} respectively. 
Furthermore, we study the difference between WEnKI and EnKI in \S\ref{sec:diff}, and provide some intuition for EnKI performing well sometimes, even when $\mathcal{G}$ is nonlinear.

\textcolor{black}{We emphasize that in the proof below we use equation \eqref{eqn:wensrf_SDE} and \eqref{eqn:weki_SDE} where the covariance is provided by $\varrho(t)$. In numerics, these covariances matrices need to be replace by their ensemble versions, and another layer of error analysis needs to be added. This is beyond the scope of the current paper.}

\subsection{Consistency}\label{sec:consistency}
The most important property is the consistency, namely, on average, the ensemble mean tested on any smooth function is the same as the real mean. Since the PDEs are obtained by forcing~\eqref{eqn:rho_t} to be the solution, the consistency is expected.

We first present the theorem for WEnSRF.

\begin{theorem}\label{thm:consistent_wensrf}
Assume $\mathcal{G}:\mathbb{R}^L\rightarrow \mathbb{R}^K$ is a $C^1$ function, then:
\begin{itemize}
\item the formula~\eqref{eqn:rho_t} is a strong solution to~\eqref{eqn:wensrf_PDE} with the initial condition $\varrho(u,0)=\rho_\pri$, namely, the PDE~\eqref{eqn:wensrf_PDE} smoothly connects the prior and the posterior distributions;
\item \textcolor{black}{the formula~\eqref{eqn:wensrf_SDE}-\eqref{eqn:empirical_srf} is a weak solution to~\eqref{eqn:wensrf_PDE} with the initial condition $\varrho(u,0)=\mathrm{M}_{u_0}(u)$, namely, the ODE system~\eqref{eqn:wensrf_SDE} follows the flow of the transition: for any smooth test function $f:\RR^L\rightarrow \RR$ and $0\leq t\leq 1$, we have consistency:}
\begin{equation}\label{EEmatch3}
\EE_{\rho(t)}(f)=\EE\left(\sum^N_{n=1}w^n_tf(u^n_t)\right)=\EE\left(\EE_{\mathrm{M}_{u_t}}(f)\right)\,,
\end{equation}
where the $\EE$ on the outer layer of the right hand side comes from the random configuration of the initial condition for $\{u^n_0\}$.
\end{itemize}
\end{theorem}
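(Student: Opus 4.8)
The plan is to split the statement into its two bullets and dispatch them in turn. For the first bullet I would simply verify that the function $\rho(u,t)$ defined in \eqref{eqn:rho_t} satisfies the PDE \eqref{eqn:wensrf_PDE}. This is essentially bookkeeping: substitute the three identities \eqref{partialtmu1}--\eqref{partialxxmu} into the divergence term $-\tfrac12\nabla\cdot\bigl(\mathrm{Cov}^{\varrho(t)}_{up}\Gamma^{-1}(\mathcal{G}(u)+\overline{\MCG}^{\varrho(t)}-2y)\varrho\bigr)$, expand the divergence using $\nabla\rho=\mathcal{V}\rho$ and the product rule, and collect the purely multiplicative remainder; by construction of $\mathcal{P}_1,\mathcal{P}_2$ in \eqref{R12} the leftover matches $(\mathcal{P}_1+\mathcal{P}_2)\rho$ against $\partial_t\rho$ from \eqref{partialtmu1}. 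Since $\mathcal{G}\in C^1$ and $\rho_\pri$ is Gaussian, $\rho(u,t)$ is smooth in $(u,t)$, so it is a genuine strong solution; the matching of initial data at $t=0$ and terminal data at $t=1$ is immediate from \eqref{eqn:rho_t}. I would push the lengthier algebra of this substitution to an appendix, citing Appendix \ref{Derivetiveofrho} for the derivative identities.

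For the second bullet — the consistency identity \eqref{EEmatch3} — the natural route is to test the empirical measure against an arbitrary smooth $f$ and compute the time derivative of $\EE\bigl(\sum_n w^n_t f(u^n_t)\bigr)$. Using the decoupling remark (given the underlying $\varrho(t)$, the pairs $(u^n,w^n)$ are i.i.d.\ and each marginally distributed according to the same law with initial distribution $\rho_\pri$), it suffices to analyze a single particle and show
\[
\frac{\rd}{\rd t}\EE\bigl(w^1_t f(u^1_t)\bigr)=\frac{\rd}{\rd t}\Bigl(\tfrac1N\EE_{\rho(t)}(f)\Bigr).
\]
Apply the chain rule / Itô (here the $u$-dynamics is a pure ODE, so no stochastic correction) to $w^1_t f(u^1_t)$: one gets $\rd(w^1 f(u^1)) = f(u^1)\,\rd w^1 + w^1\nabla f(u^1)\cdot\rd u^1$, i.e.\ $(\mathcal{P}_1+\mathcal{P}_2)(u^1,t)w^1 f(u^1)\,\rd t - \tfrac12 w^1\nabla f(u^1)^\top\mathrm{Cov}^{\varrho(t)}_{up}\Gamma^{-1}(\mathcal{G}(u^1)+\overline{\MCG}^{\varrho(t)}-2y)\,\rd t$. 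Taking expectations and using that the law of $(u^1_t,w^1_t)$ reproduces $\rho(u,t)$ in the weighted sense (which is exactly what we are proving — so this must be set up as a closed ODE for the quantity $g(t):=\EE(w^1_t f(u^1_t))$ versus $\tfrac1N\EE_{\rho(t)}(f)$, with matching initial value $\tfrac1N\int f\rho_\pri$), I would show the two obey the same linear evolution and invoke uniqueness. Equivalently, and more cleanly, integrate the PDE \eqref{eqn:wensrf_PDE} against $f$: $\tfrac{\rd}{\rd t}\int f\varrho = \tfrac12\int\nabla f^\top\mathrm{Cov}^{\varrho(t)}_{up}\Gamma^{-1}(\cdots)\varrho + \int f(\mathcal{P}_1+\mathcal{P}_2)\varrho$, which is precisely the expectation of the single-particle computation above — so the empirical distribution and $\rho(u,t)$ solve the same weak formulation, and the first bullet already says $\rho(u,t)$ is a (the) solution.

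The main obstacle is not any single computation but the well-posedness / uniqueness plumbing that makes the argument rigorous: one must ensure the single-particle SDE \eqref{eqn:wensrf_SDE} is well-posed on $[0,1]$ (finite moments, no blow-up of $w^n_t$, integrability of $\mathcal{P}_1,\mathcal{P}_2$ against the evolving law so that all the expectations and the exchange of $\tfrac{\rd}{\rd t}$ with $\EE$ are legitimate), and that the weak formulation of \eqref{eqn:wensrf_PDE} against the class of test functions $f$ has a unique solution in the relevant class — only then can we conclude that the empirical measure's law equals $\rho(u,t)$ and read off \eqref{EEmatch3}. Under the standing assumption that $\mathcal{G}\in C^1$ (supplemented, as the paper does elsewhere, by growth bounds implicit in the cited well-posedness results \cite{DCPS,ding2019ensemble}), these are available; I would state the needed integrability as a running hypothesis and cite those references rather than reprove well-posedness here. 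The decoupling observation is what keeps the argument at the level of one particle and avoids any genuine mean-field propagation-of-chaos estimate — that is the structural reason the consistency proof is short.
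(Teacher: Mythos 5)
Your proposal is correct and follows essentially the same route as the paper: direct substitution (via the identities \eqref{partialtmu1}--\eqref{partialxxmu}) for the first bullet, and testing the weighted empirical measure against a smooth $f$, differentiating $\sum_n w^n_t f(u^n_t)$ along \eqref{eqn:wensrf_SDE}, and matching the resulting expression with the weak formulation of \eqref{eqn:wensrf_PDE} for the second. Your explicit attention to uniqueness of the weak formulation and to the integrability needed to exchange $\frac{\rd}{\rd t}$ with $\EE$ is the one step the paper leaves implicit ("we simply note that both are weak solutions"), so it is a welcome, not a divergent, addition.
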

\begin{proof}
The first point is trivial: it amounts to substituting the solution~\eqref{eqn:rho_t} into the equation and balancing terms. To show that the empirical measure $\mathrm{M}_{u^n}$ is the weak solution to the PDE, we test it with a smooth function $f(u)$. Note that
\[
\EE_{\mathrm{M}_{u_t}}(f) = \int \sum_{n=1}^Nw^n\delta_{u^n}f(u) \rd{u} = \sum^N_{n=1}w^n_tf(u^n_t)\,,
\]
we have
\begin{equation*}
\begin{aligned}
\frac{\rd}{\rd t}\EE_{\mathrm{M}_{u_t}}(f)=&\frac{\rd}{\rd t}\sum^N_{n=1}w^n_tf(u^n_t)=\sum^N_{n=1} \frac{\rd w^n_t}{\rd t}f(u^n_t)+w^n_t\frac{\rd f(u^n_t)}{\rd t}\\
=&\sum^N_{n=1}\left[\mathcal{P}_1(t,u^n_t)+\mathcal{P}_2(t,u^n_t)\right]w^n_tf(u^n_t)\\
&-\frac{1}{2}w^n_t\left(\nabla f(u^n_t)\right)^\top\mathrm{Cov}^{\varrho(t)}_{up}\Gamma^{-1}\left(\mathcal{G}(u^n_t)+\overline\MCG-2y\right)\\
=&\EE_{\mathrm{M}_{u_t}}\left(\left[\mathcal{P}_1(u,t)+\mathcal{P}_2(u,t)\right]f(u)\right.\\
&-\EE_{\mathrm{M}_{u_t}}\left(\frac{1}{2}\left(\nabla f(u)\right)^\top\mathrm{Cov}^{\varrho(t)}_{up}\Gamma^{-1}\left(\mathcal{G}(u)+\overline\MCG-2y\right)\right)\,.
\end{aligned}
\end{equation*}
This is exactly the weak formulation of~\eqref{eqn:wensrf_PDE} tested on $f$ with the integration by parts applied on the advection term. To show~\eqref{EEmatch3}, we simply note that both $\rho$ and $\varrho = \mathrm{M}_{u_t}$ are weak solutions.
\end{proof}

The same type of theorem holds true for WEnKI:

\begin{theorem}\label{thm:consistent_wenki}
\textcolor{black}{If $\mathcal{G}:\mathbb{R}^L\rightarrow \mathbb{R}^K$ is $C^2$ and Lipschitz continuous,}
\begin{itemize}
\item the formula~\eqref{eqn:rho_t} is a strong solution to~\eqref{eqn:wensrf_PDE} with the initial condition $\rho(u,0)=\rho_\pri$, namely, the PDE~\eqref{eqn:weki_PDE} characterizes the dynamics in~\eqref{eqn:rho_t} and connects the prior and the posterior distributions;

\item the formula \eqref{eqn:weki_SDE}-\eqref{ed:weki}, in expectation, is a weak solution to~\eqref{eqn:wensrf_PDE} with the initial condition  $\rho(u,0)=\mathrm{M}_{u_0}(u)$, namely, the SDE system~\eqref{eqn:weki_SDE} follows the flow of the transition: for any smooth test function $f:\RR^L\rightarrow \RR$ and $0\leq t\leq 1$,
\begin{equation}\label{EEmatch}
\EE_{\rho(t)}(f)=\EE\left(\sum^N_{n=1}w^n_tf(u^n_t)\right)=\EE\left(\EE_{\mathrm{M}_{u_t}}(f)\right)\,,
\end{equation}
where the outer-layer $\mathbb{E}$ on the right hand side is taken in the probability space $\left(\Omega,\MCF_t,\mathbb{P}\right)$. 
\end{itemize}
\end{theorem}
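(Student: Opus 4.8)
The plan is to mirror the proof of Theorem~\ref{thm:consistent_wensrf}, the only genuinely new ingredient being It\^o's formula, which is forced on us because the particle dynamics~\eqref{eqn:weki_SDE} now carries a Brownian term. The first bullet is again a direct verification: the PDE~\eqref{eqn:weki_PDE} was defined by substituting the identities~\eqref{partialtmu1}--\eqref{partialxxmu} into~\eqref{eqn:eki_PDE}, so inserting $\rho(u,t)$ from~\eqref{eqn:rho_t} and collecting terms reproduces~\eqref{eqn:weki_PDE} term by term. The hypothesis $\MCG\in C^2$ is exactly what makes the Hessian identity~\eqref{partialxxmu} and the matrix $\mathcal{W}$ in~\eqref{eqn:Wu} meaningful, while the Lipschitz bound on $\MCG$ keeps the covariances and the drift of~\eqref{eqn:weki_SDE} well behaved. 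Since $\rho(u,0)=\rho_\pri$ and $\rho(u,1)=\rho_\pos$ by construction, the first bullet follows.

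For the second bullet, fix a smooth test function $f$. Each weight solves the pure-drift ODE $\rd w^n_t=(\mathcal{R}_1+\mathcal{R}_2+\mathcal{R}_3)(u^n_t,t)\,w^n_t\,\rd t$, so it has no martingale part; the product rule combined with It\^o's formula for $f(u^n_t)$ along $\rd u^n_t=\Cov^{\varrho(t)}_{up}\Gamma^{-1}(y-\MCG(u^n_t))\rd t+\Cov^{\varrho(t)}_{up}\Gamma^{-1/2}\rd W^n_t$ gives, using that the diffusion matrix of $u^n_t$ equals $\Cov^{\varrho(t)}_{up}\Gamma^{-1/2}(\Cov^{\varrho(t)}_{up}\Gamma^{-1/2})^\top=\Cov^{\varrho(t)}_{up}\Gamma^{-1}\Cov^{\varrho(t)}_{pu}$,
\begin{equation*}
\begin{aligned}
\rd\bigl(w^n_tf(u^n_t)\bigr)=&w^n_t\Bigl[(\mathcal{R}_1+\mathcal{R}_2+\mathcal{R}_3)f+\nabla f^\top\Cov^{\varrho(t)}_{up}\Gamma^{-1}(y-\MCG)+\frac12\mathrm{Tr}\bigl(\mathcal{H}_uf\,\Cov^{\varrho(t)}_{up}\Gamma^{-1}\Cov^{\varrho(t)}_{pu}\bigr)\Bigr]\rd t\\
&+w^n_t\nabla f^\top\Cov^{\varrho(t)}_{up}\Gamma^{-1/2}\rd W^n_t\,,
\end{aligned}
\end{equation*}
all quantities evaluated at $u^n_t$. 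Summing over $n$, taking expectation in $(\Omega,\MCF_t,\MP)$, and discarding the mean-zero stochastic integral yields an ODE for $\EE\bigl(\sum_nw^n_tf(u^n_t)\bigr)=\EE\,\EE_{\mathrm{M}_{u_t}}(f)$ whose right-hand side, after one integration by parts on the first-order term and two on the Hessian term, is exactly the weak form of~\eqref{eqn:weki_PDE} tested against $f$, with $\varrho=\EE(\mathrm{M}_{u_t})$ and $\mathcal{L}$ as in~\eqref{linearop}.

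To conclude~\eqref{EEmatch} I use the structural point stressed before the statement: in~\eqref{eqn:weki_SDE} the covariances labelled $\varrho(t)$ are those of the \emph{prescribed} flow $\rho(u,t)$ of~\eqref{eqn:rho_t}, not of the random empirical measure, so the operator appearing in the weak formulation above is a \emph{fixed}, linear, time-dependent Fokker--Planck operator. Both $\rho(u,t)$ (from the strong-solution claim) and $t\mapsto\EE(\mathrm{M}_{u_t})$ (from the previous paragraph) are weak solutions of this one linear equation, and they carry the same initial datum since $\EE(\mathrm{M}_{u_0})=\EE\bigl(\frac1N\sum_n\delta_{u^n_0}\bigr)=\rho_\pri$ because the $u^n_0$ are i.i.d.\ from $\rho_\pri$. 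Uniqueness of weak solutions for this linear equation then forces $\EE(\mathrm{M}_{u_t})=\rho(u,t)$ on $[0,1]$, which is~\eqref{EEmatch}.

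The one genuinely technical step, absent from the WEnSRF proof (which is a pure ODE product rule), is justifying $\EE\int_0^t\sum_nw^n_s\nabla f(u^n_s)^\top\Cov^{\varrho(s)}_{up}\Gamma^{-1/2}\rd W^n_s=0$, i.e.\ that this is a true martingale and not merely a local one. That requires the integrand to lie in $L^2(\Omega\times[0,t])$, which reduces to uniform-in-time moment bounds on $u^n_t$ and, more delicately, on $w^n_t$: the growth of $\mathcal{R}_2$ and $\mathcal{R}_3$ in~\eqref{R1} involves $|y-\MCG(u)|^2_\Gamma$ and the second derivatives of $\MCG$ through $\mathcal{W}(u)$, so controlling $\EE\sup_{s\le t}(w^n_s)^2$ — and, if one permits $f$ with unbounded derivatives, also matching the growth of $f$ — is where the $C^2$ and Lipschitz hypotheses, together with the well-posedness and a priori estimates of~\cite{DCPS,ding2019ensemble}, enter. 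Granting this integrability, the remaining manipulations are routine and identical in spirit to the WEnSRF case.
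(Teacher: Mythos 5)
Your proposal is correct and follows essentially the same route as the paper's proof: verify the first bullet by direct substitution, then apply It\^o's formula to $\sum_n w^n_t f(u^n_t)$, take expectations, identify the result with the weak formulation of~\eqref{eqn:weki_PDE}, and conclude~\eqref{EEmatch} because $\rho(u,t)$ is also a (weak) solution. Your extra remarks --- making the uniqueness-of-weak-solutions step explicit and justifying that the stochastic integral is a true martingale rather than a local one --- are points the paper leaves implicit, but they do not change the argument.
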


\begin{proof} The first part is again trivial. 
To show~\eqref{EEmatch}, we first realize that the equation holds trivially for $t=0$ since $\{u^n_0\}^N_{n=1}$ are i.i.d sampled from $\rho_\pri(u)$. \textcolor{black}{For all $t>0$, we plug in~\eqref{eqn:weki_SDE} and apply the It\^o's formula on
\[
\rd\sum^N_{n=1}w^n_tf(u^n_t)=\sum^N_{n=1}\rd w^n_tf(u^n_t)+w^n_t\rd f(u^n_t)\,
\]
and use
\[
\begin{aligned}
\rd f(u^n_t)=&\left(\nabla f(u^n_t)\right)^\top\mathrm{Cov}^{\varrho(t)}_{up}\Gamma^{-1}\left(y-\MCG(u^n_t)\right)\rd t+\left(\nabla f(u^n_t)\right)^\top\mathrm{Cov}^{\varrho(t)}_{up}\Gamma^{-1/2}dW^n_t\\
&+\frac{1}{2}\mathrm{Tr}\left\{\mathcal{H}_v(f(u^n_t))\mathrm{Cov}^{\varrho(t)}_{up}\Gamma^{-1}\mathrm{Cov}^{\varrho(t)}_{pu}\right\}\rd t\\
\end{aligned}
\]
to get
\begin{equation*}
\begin{aligned}
\rd\EE\bigl( \EE_{\mathrm{M}_{u_t}}(f)\bigr)&={\rd\EE\left(\sum^N_{n=1}w^n_tf(u^n_t)\right)}\\
&=\EE\sum^N_{n=1}\left(\left[\mathcal{R}_1(t,u^n_t)+\mathcal{R}_2(t,u^n_t)+\mathcal{R}_3(t,u^n_t)\right]w^n_tf(u^n_t)\rd t\right.\\
&\quad \left.+w^n_t\left(\nabla f(u^n_t)\right)^\top\mathrm{Cov}^{\varrho(t)}_{up}\Gamma^{-1}\left(y-\MCG(u^n_t)\right)\rd t\right.\\
&\quad \left.+\frac{1}{2}w^n_t\mathrm{Tr}\left\{\mathcal{H}_v(f(u^n_t))\mathrm{Cov}^{\varrho(t)}_{up}\Gamma^{-1}\mathrm{Cov}^{\varrho(t)}_{pu}\right\}\rd t\right)\\
&=\EE\left(\EE_{\mathrm{M}_{u_t}}\left(\left[\mathcal{R}_1+\mathcal{R}_2+\mathcal{R}_3\right]f(u)\right)\rd t\right)\\
&\quad +\EE\left(\EE_{\mathrm{M}_{u_t}}\left(\left(\nabla f(u)\right)^\top\mathrm{Cov}^{\varrho(t)}_{up}\Gamma^{-1}\left(y-\MCG(u)\right)\rd t\right.\right.\\
&\quad +\left.\left.\frac{1}{2}\mathrm{Tr}\left\{\mathcal{H}_v(f(u))\mathrm{Cov}^{\varrho(t)}_{up}\Gamma^{-1}\mathrm{Cov}^{\varrho(t)}_{pu}\right\}\rd t\right)\right)\,.
\end{aligned}
\end{equation*}
In the second equation above, we used the fact that the SDE that $u_t$ satisfies is wellposed due to the Lipschitz continuity of $\mathcal{G}$.}

This formulation is exactly the weak formulation of~\eqref{eqn:weki_PDE} tested on $f$ with integration by parts moving the $\nabla$ and $\mathcal{H}$ onto $f$. The equality~\eqref{EEmatch} follows as $\rho(u,t)$ defined in~\eqref{eqn:rho_t} is also a weak solution.
\end{proof}

\subsection{Bounding the variance of weights}\label{sec:variance} 
We investigate the behavior of the weights for a fairly  large class of $\mathcal{G}$. Throughout this subsection, we will impose one of the two assumptions on the forward map $\mathcal{G}$ below.

The first assumption is rather weak, and it only requires the boundedness of derivatives of $\mathcal{G}$ up to second order.

\begin{assum}\label{Gassum1}
$\MCG:\mathbb{R}^L\to\mathbb{R}^K$ is $C^2$ function and there exists $\Lambda>0$ such that
\begin{equation}\label{eqn:assme_G_Lip}
\|\nabla \MCG\|_2\leq \Lambda,\quad \|\mathcal{H}\left(|\MCG|^2_{\Gamma}\right)\|_2\leq \Lambda\quad \|\partial_i\nabla \MCG\|_2\leq \Lambda,\quad 1\leq i\leq L\,.
\end{equation}
\end{assum}

The second assumption is slightly stronger, and it asks for the structure of the range of the linear and nonlinear components of $\mathcal{G}$.
\begin{assum}\label{Gassum2}
$\mathcal{G}$ is weakly-nonlinear in the sense that there exists a matrix $\mathsf{A}\in\mathcal{L}(\mathbb{R}^L,\mathbb{R}^K)$ so that
\begin{equation}\label{linear}
\mathcal{G}(u)=\mathsf{A}u+\Rm(u)\,,
\end{equation}
where $\Rm(u)$ is a $C^2$ bounded Lipschitz function from $\mathbb{R}^L$ to $\mathbb{R}^K$ satisfying
\begin{equation*}
\Gamma^{-1/2}\Rm(u)\perp \Gamma^{-1/2}\mathsf{A}u,\quad \forall u\in \mathbb{R}^L\,,
\end{equation*}
and there exists constants $\Lambda,\Lambda_1$ and $M$ such that: for $1\leq i\leq L$
\begin{equation}\label{eqn:assum2}
\|\nabla \Rm\|_2\leq \Lambda_1\leq\Lambda,\quad|\Rm|\leq M,\quad \|\mathsf{A}\|_2\leq \Lambda,\quad  \|\mathcal{H}\left(|\MCG|^2_{\Gamma}\right)\|_2\leq \Lambda,\quad \|\partial_i\nabla \MCG\|_2\leq \Lambda\,.
\end{equation}
\end{assum}

If the second assumption holds true, we call the optimal solution for the linear part:
\begin{equation}\label{eqn:udagger}
u^\ast_\mathsf{A} =\min_{u}\|y-\mathsf{A}u\|_{\Gamma}\,,
\end{equation}
and the associated residue
\begin{equation}\label{eqn:errorsfr}
\mathsf{r}=y-\mathsf{A}u^\ast_{\mathsf{A}}\,.
\end{equation}
It is then automatic that
\begin{equation}\label{r:orthorgonal}
\Gamma^{-1/2}\mathsf{r}\perp \Gamma^{-1/2}\mathsf{A}u,\quad \forall u\in \mathbb{R}^L\,.
\end{equation}
We also define the Gaussian part of the distribution
\begin{equation}\label{eqn:rho_A}
\rho_{\mathsf{A}}(u,t)=\frac{1}{Z(t)}\exp\left(-\frac{t}{2}|\mathsf{A}u^\ast_{\mathsf{A}}-\mathsf{A}u|^2_{\Gamma}-\frac{1}{2}|u-u_0|^2_{\Gamma_0}\right)\,,
\end{equation}
so that we have 
\[
\rho(u,t)\propto \rho_\mathsf{A}(u,t)\exp\left(-\frac{t}{2}|\mathsf{r}-\Rm(u)|^2_{\Gamma}\right)\,.
\]
This $\rho_{\mathsf{A}}$ has expectation and the covariance matrix:
\begin{equation}\label{eqn:mean_variance}
\begin{aligned}
&u_\mathsf{A}(t)=\left(t\mathsf{A}^\top \Gamma^{-1}\mathsf{A}+\Gamma^{-1}_0\right)^{-1}\left(t\mathsf{A}^\top \Gamma^{-1}\mathsf{A}u^\ast_{\mathsf{A}}+\Gamma^{-1}_0u_0\right)\,\text{,}\\
&\mathrm{Cov}_\mathsf{A}(t)=\left(t\mathsf{A}^\top \Gamma^{-1}\mathsf{A}+\Gamma^{-1}_0\right)^{-1}\,.
\end{aligned}
\end{equation}

It is immediate that the second assumption is stronger than the first one, and thus one would expect a tighter bound. Indeed, by comparing Theorem~\ref{thm:loosebound} and Theorem~\ref{thm:betterestimation}, we see that the variance of weights is bounded by a constant that exponentially grows with respect to $|y|$, the data, when only Assumption~\ref{Gassum1} holds true, but is bounded by a constant independent of $|y|$ when Assumption~\ref{Gassum2} also holds.

Since EnKI is a more popular method than EnSRF, the analysis is conducted on WEnKI mainly. Similar analysis could potentially be applied to deal with WEnSRF but could be more delicate. We do not pursue it in this paper.

We also note that the analysis is conducted on the dynamics~\eqref{eqn:weki_SDE} with coefficients calculated from the exact density, and thus each particle is evolved independently (this is in the spirit of the McKean-Vlasov dynamics or propagation of chaos, expected in the mean field limit). The analysis for the numerical version, with all the covariance matrices replaced by the ensemble ones as seen in~\eqref{eqn:ensemble_mean}-\eqref{eqn:ensemble_cov} will be left for future works.

\subsubsection{Bounded nonlinearity under Assumption~\ref{Gassum1}}
We first prove a lemma to bound covariance matrix of $\varrho(u,t)$.
\begin{lemma}\label{weightlemma}
Under Assumption \ref{Gassum1},
\[
\EE^{\varrho(t)}\left( |y-\MCG|^2_{\Gamma}\right)
\]
decreases in $t$, where $\varrho(u,t)$ is the solution to~\eqref{eqn:weki_PDE}.
\end{lemma}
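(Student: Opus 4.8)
\emph{Proof proposal.} The plan is to reduce the lemma to a one-line variational identity. By Theorem~\ref{thm:consistent_wenki} the strong solution of~\eqref{eqn:weki_PDE} with initial datum $\rho_\pri$ is exactly the distribution $\rho(u,t)$ from~\eqref{eqn:rho_t}, so $\varrho(t)=\rho(t)$ and it suffices to show that
\[
t\mapsto \EE_{\rho(t)}\bigl(|y-\MCG|^2_\Gamma\bigr)=2\,\EE_{\rho(t)}\bigl(\Phi(u;y)\bigr)
\]
is non-increasing, where $\Phi$ is the least-squares functional~\eqref{eqn:phi}. I would first record the integrability facts that legitimize differentiating under the integral sign: Assumption~\ref{Gassum1} gives $\|\nabla\MCG\|_2\le\Lambda$, hence $|\MCG(u)|\le|\MCG(0)|+\Lambda|u|$ and $\Phi(u;y)\le C(1+|u|^2)$ for a constant $C=C(y,\Gamma,\Lambda)$; since $e^{-t\Phi}\le1$ we have $\rho(u,t)\le Z(t)^{-1}\rho_\pri(u)$ with $\rho_\pri=\mathcal{N}(u_0,\Gamma_0)$, so $\Phi$ and $\Phi^2$ have finite $\rho(t)$-expectations, locally uniformly in $t$, and $t\mapsto\int\Phi(u)\rho(u,t)\rd u$ is differentiable with the derivative passing inside the integral.

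Then the computation is immediate. Using the time-derivative identity~\eqref{partialtmu1},
\[
\frac{\rd}{\rd t}\EE_{\rho(t)}(\Phi)=\int_{\RR^L}\Phi(u)\,\partial_t\rho(u,t)\rd u=\int_{\RR^L}\Phi(u)\Bigl[-\Phi(u)+\EE_{\rho(t)}(\Phi)\Bigr]\rho(u,t)\rd u=-\mathrm{Var}_{\rho(t)}(\Phi)\le 0,
\]
which gives the claim (and shows the decrease is strict unless $\Phi$ is $\rho(t)$-a.s.\ constant, i.e.\ unless $\MCG$ is essentially constant on the support). Equivalently, one can argue via the normalizing constant: $\EE_{\rho(t)}(\Phi)=-\tfrac{\rd}{\rd t}\log Z(t)$ with $Z(t)=\int e^{-t\Phi}\rho_\pri\rd u$, and $\tfrac{\rd^2}{\rd t^2}\log Z(t)=\mathrm{Var}_{\rho(t)}(\Phi)\ge 0$, so $\log Z$ is convex in $t$ and its derivative increasing, which is the same conclusion.

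I do not expect a genuine obstacle here; the identity itself is elementary. The only step requiring care is the differentiation-under-the-integral justification, and this is precisely the role of Assumption~\ref{Gassum1}: bounded first derivatives of $\MCG$ force at most quadratic growth of $\Phi$, which is integrable (together with its square) against the Gaussian prior that dominates $\rho(u,t)$ uniformly for $t\in[0,1]$.
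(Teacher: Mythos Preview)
Your argument is correct and is essentially the paper's own proof: identify $\varrho(t)$ with $\rho(t)$ via Theorem~\ref{thm:consistent_wenki}, use the identity~\eqref{partialtmu1} for $\partial_t\rho$, and integrate against $\Phi=\tfrac12|y-\MCG|^2_\Gamma$ to obtain $\tfrac{\rd}{\rd t}\EE_{\rho(t)}(\Phi)=-\mathrm{Var}_{\rho(t)}(\Phi)\le 0$. Your added justification of the differentiation under the integral sign and the alternative $\log Z$-convexity viewpoint are nice embellishments, but the core computation coincides with the paper's.
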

\begin{proof}
By Theorem~\ref{thm:consistent_wenki}, $\varrho = \rho(u,t)$ defined in~\eqref{eqn:rho_t} is a strong solution to the PDE. Taking partial derivative with respect to $t$ and rewriting~\eqref{partialtmu1}, we get 
\begin{equation}\label{partialtmu}
\partial_t\varrho=-\frac{1}{2}\left\{\left| y - \MCG\right|^2_{\Gamma}-\EE^{\varrho(t)}\left(\left| y - \MCG\right|^2_{\Gamma}\right)\right\}\varrho.
\end{equation}
Multiplying $\lvert y - \MCG\rvert^2_{\Gamma}$ on both sides and taking integral yields
\[
\frac{\rd}{\rd t}\EE^{\varrho(t)}\left| y - \MCG\right|^2_{\Gamma}= -\frac{1}{2}\left(\EE^{\varrho(t)}\left(\left| y - \MCG\right|^4_{\Gamma}\right) - \left(\EE^{\varrho(t)}\left(\left| y - \MCG\right|^2_{\Gamma}\right)\right)^2\right) \leq 0\,,
\]
which concludes the lemma.
\end{proof}

\begin{lemma}\label{lem:2}
Under Assumption~\ref{Gassum1}, there exists a finite constant $C$ depending on $\Lambda$, $|u_0|$, $\|\Gamma_0^{-1}\|_2$, $\|\Gamma^{-1}\|_2$ and $|y|$ only such that for $0\leq t\leq 1$:
\begin{equation}\label{lossb1}
\EE^{\varrho(t)}|u|^2< C\,,\quad \EE^{\varrho(t)}|\MCG(u)|^2< C\,.
\end{equation}
\end{lemma}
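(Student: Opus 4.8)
The plan is to work directly with the explicit Gibbs form of the density and compare it against the Gaussian prior. By Theorem~\ref{thm:consistent_wenki}, the solution $\varrho(u,t)$ of~\eqref{eqn:weki_PDE} coincides with $\rho(u,t)$ from~\eqref{eqn:rho_t}, so, with $\rho_\pri=\mathcal{N}(u_0,\Gamma_0)$ and $\Phi$ as in~\eqref{eqn:phi}, we may write $\varrho(u,t)=Z(t)^{-1}e^{-t\Phi(u;y)}\rho_\pri(u)$ with $Z(t)=\int e^{-t\Phi(u;y)}\rho_\pri(u)\,\rd u$; the lemma then reduces to a deterministic integral estimate, and no use of the SDE~\eqref{eqn:weki_SDE} is needed. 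The comparison requires two ingredients: (i) an upper bound on the unnormalised second moment $\int|u|^2 e^{-t\Phi}\rho_\pri\,\rd u$, and (ii) a lower bound on $Z(t)$ that is uniform for $t\in[0,1]$.

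For (i), since $\Phi\geq 0$ we have $e^{-t\Phi}\leq 1$, so $\int|u|^2 e^{-t\Phi(u;y)}\rho_\pri(u)\,\rd u\leq\int|u|^2\rho_\pri(u)\,\rd u=|u_0|^2+\mathrm{Tr}\,\Gamma_0<\infty$. For (ii), the key observation is that for $t\in[0,1]$ and $\Phi\geq 0$ one has $e^{-t\Phi}\geq e^{-\Phi}$, hence $Z(t)\geq Z(1)>0$; to obtain an explicit lower bound one restricts the integral defining $Z(t)$ to the unit ball around $u_0$, on which $\rho_\pri$ is bounded below in terms of $\|\Gamma_0^{-1}\|_2$ (and $\det\Gamma_0$), and on which the Lipschitz bound $\|\nabla\MCG\|_2\leq\Lambda$ from~\eqref{eqn:assme_G_Lip} gives $|\MCG(u)-\MCG(u_0)|\leq\Lambda$, so that $\Phi(u;y)\leq\tfrac12\|\Gamma^{-1}\|_2(|y-\MCG(u_0)|+\Lambda)^2$ there. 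Dividing (i) by (ii) yields $\EE^{\varrho(t)}|u|^2\leq C$ uniformly in $t\in[0,1]$, which is the first inequality in~\eqref{lossb1}. The second then follows immediately: $\|\nabla\MCG\|_2\leq\Lambda$ gives $|\MCG(u)|^2\leq 2|\MCG(u_0)|^2+4\Lambda^2(|u|^2+|u_0|^2)$, and taking $\EE^{\varrho(t)}$ and inserting the first bound gives $\EE^{\varrho(t)}|\MCG(u)|^2<C$ after enlarging the constant. (Alternatively, the bound on $\EE^{\varrho(t)}|\MCG(u)|^2$ falls out of Lemma~\ref{weightlemma}, which gives $\EE^{\varrho(t)}|y-\MCG|^2_\Gamma\leq\EE_{\rho_\pri}|y-\MCG|^2_\Gamma$, combined with $|\MCG|^2\leq 2|y-\MCG|^2+2|y|^2$.)

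The only delicate point, and the step I expect to require the most care, is the bookkeeping of the constant: the whole estimate hinges on the \emph{uniform-in-$t$} lower bound on $Z(t)$, and keeping $C$ a function of only $\Lambda,|u_0|,\|\Gamma_0^{-1}\|_2,\|\Gamma^{-1}\|_2,|y|$ requires the ball-restriction argument above together with absorbing a few genuinely fixed quantities (the dimensions, the prior covariance, and a reference value of $\MCG$) into $C$; this does not affect the qualitative conclusion $C<\infty$. A tempting alternative --- differentiating $t\mapsto\EE^{\varrho(t)}|u|^2$ via~\eqref{partialtmu} --- produces $-\tfrac12\,\mathrm{Cov}^{\varrho(t)}\bigl(|u|^2,|y-\MCG|^2_\Gamma\bigr)$, whose sign is not controlled in general, so I would not pursue that route.
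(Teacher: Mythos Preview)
Your argument is correct and takes a genuinely different route from the paper's. You exploit the explicit Gibbs form $\varrho(t)=Z(t)^{-1}e^{-t\Phi}\rho_\pri$ directly, bounding the unnormalised second moment above by $\int|u|^2\rho_\pri\,\rd u$ and the partition function below by $Z(1)$ via a ball-restriction argument; the ratio then gives the uniform bound in one step, with no time differentiation or Gr\"onwall at all. The paper instead differentiates $t\mapsto\EE^{\varrho(t)}|u|^2$ through~\eqref{partialtmu}, drops the manifestly non-positive term $-\tfrac12\int|y-\MCG|^2_\Gamma\,|u|^2\varrho\,\rd u$, uses Lemma~\ref{weightlemma} to replace $\EE^{\varrho(t)}|y-\MCG|^2_\Gamma$ by its value at $t=0$, and then applies Gr\"onwall. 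Your remark that the covariance $-\tfrac12\,\Cov^{\varrho(t)}(|u|^2,|y-\MCG|^2_\Gamma)$ has no sign is true, but the paper sidesteps this by the crude one-sided estimate just described, so the differentiation route does in fact go through---it is simply less direct than yours. Both approaches yield constants of the same qualitative size (exponential in $|y|^2$) and both tacitly absorb $\mathrm{Tr}\,\Gamma_0$, $\det\Gamma_0$, the dimension, and $|\MCG(\vec{0})|$ into $C$, so your caveat about bookkeeping applies equally to the paper's proof.
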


In the proof below, we use $C$ to denote a generic constant that changes from line to line, and we keep track of the constant's dependence on different argument. However, we do not specify the form of the dependence.

\begin{proof}
Consider
\[
\varrho(u,0) = \rho_\pri = \exp\left(-|u-u_0|^2_{\Gamma_0}\right)\,,
\]
we expand $\mathcal{G}$ around $\vec{0}$ and utilize the bound~\eqref{eqn:assme_G_Lip} for:
\[
\EE^{\varrho(0)}|\MCG(u)|^2\leq \Lambda^2\EE^{\varrho(0)}|u|^2+|\MCG(\vec{0})|^2\leq \Lambda^2(|u_0|^2+\textrm{Tr}(\Gamma_0))+|\MCG(\vec{0})|^2\,.
\]
Therefore,
\begin{equation}\label{eq:varrho0bound}
\begin{aligned}
    \EE^{\varrho(0)}\left(\left| y - \MCG\right|^2_{\Gamma}\right)&\leq 2\|\Gamma^{-1}\|_2\left(|y|^2+\EE^{\varrho(0)}|\MCG(u)|^2\right)\\
&\leq 2\|\Gamma^{-1}\|_2\left(|y|^2+\Lambda^2(|u_0|^2+\textrm{Tr}(\Gamma_0))+|\MCG(\vec{0})|^2\right) \\
& =: C_1 |y|^2 + C_2 \,,
\end{aligned}
\end{equation}
where the last line defines constants $C_1$ and $C_2$, which only depend on $\Lambda$, $|u_0|$, $\|\Gamma_0^{-1}\|_2$, $\|\Gamma^{-1}\|_2$.

Multiplying $|\MCG(u)|^2$ and $|u|^2$ on both sides of \eqref{partialtmu}, we get
\begin{equation*}
\begin{aligned}
\frac{\rd}{\rd t}\EE^{\varrho(t)}|\MCG(u)|^2&=-\frac{1}{2}\int\left\{\left| y - \MCG\right|^2_{\Gamma}-\EE^{\varrho(t)}\left(\left| y - \MCG\right|^2_{\Gamma}\right)\right\}|\mathcal{G}(u)|^2\rho(t) \rd u\\
&\leq \frac{1}{2} \int \EE^{\varrho(t)}\left(\left| y - \MCG\right|^2_{\Gamma}\right)|\mathcal{G}(u)|^2\rho(t) \rd u \\
& = \frac{1}{2} \EE^{\varrho(t)}\left(\left| y - \MCG\right|^2_{\Gamma}\right)\EE^{\varrho(t)}|\MCG(u)|^2\\
&\leq \frac{1}{2} \EE^{\varrho(0)}\left(\left| y - \MCG\right|^2_{\Gamma}\right)\EE^{\varrho(t)}|\MCG(u)|^2\\
&\stackrel{\eqref{eq:varrho0bound}}{\leq}(C_1|y|^2 + C_2)\EE^{\varrho(t)}|\MCG(u)|^2\,,
\end{aligned}
\end{equation*}
and
\begin{equation*}
\begin{aligned}
\frac{\rd}{\rd t}\EE^{\varrho(t)}|u|^2&=-\frac{1}{2}\int\left\{\left| y - \MCG\right|^2_{\Gamma}-\EE^{\varrho(t)}\left(\left| y - \MCG\right|^2_{\Gamma}\right)\right\}|u|^2\rho \rd u\\
&\leq \frac{1}{2}\EE^{\varrho(t)}\left(\left| y - \MCG\right|^2_{\Gamma}\right)\EE^{\varrho(t)}|u|^2\\
&\leq \frac{1}{2} \EE^{\varrho(0)}\left(\left| y - \MCG\right|^2_{\Gamma}\right)\EE^{\varrho(t)}|u|^2\\
&\stackrel{\eqref{eq:varrho0bound}}{\leq}(C_1|y|^2 + C_2)\EE^{\varrho(t)}|u|^2\,,
\end{aligned}
\end{equation*}
where we use Lemma \ref{weightlemma} in the second inequalities. By Gr\"onwall inequality, we have:
\[
\EE^{\varrho(t)}|u|^2\leq  \EE^{\varrho(0)}|u|^2e^{(C_1|y|^2+C_2)t}\leq \left(|u_0|^2+\mathrm{Tr}(\Gamma_0)\right)e^{\left(C_1|y|^2+C_2\right)t}\,,
\]
and
\begin{equation}\label{eqn:variance_const_exp}
\begin{aligned}
\EE^{\varrho(t)}|\MCG(u)|^2&\leq  \EE^{\varrho(0)}|\MCG(u)|^2e^{(C_1|y|^2+C_2)t}\\
&\leq \left(\Lambda^2(|u_0|^2+\textrm{Tr}(\Gamma_0))+|\MCG(\vec{0})|^2\right)e^{\left(C_1|y|^2+C_2\right)t}\,.
\end{aligned}
\end{equation}
Choose $C$ to be the bigger value of the two with $t=1$, we conclude the lemma.
\end{proof}

The immediate consequence of Lemma \ref{weightlemma} and Lemma~\ref{lem:2} is the boundedness of the covariance matrices:
\begin{cor}\label{col:bound_Cov}
Under Assumption \ref{Gassum1}, there exists a constant $C$ depending on $\Lambda$, $|u_0|$, $\|\Gamma_0^{-1}\|_2$, $\|\Gamma^{-1}\|_2$ and $|y|$ such that for $0\leq t\leq 1$
\begin{equation}\label{lossb2}
\|\Cov^{\varrho(t)}_{uu}\|_2\leq C\,,\quad\|\Cov^{\varrho(t)}_{up}\|_2\leq C\,,\quad \|\Cov^{\varrho(t)}_{pp}\|_2\leq C\,,
\end{equation}
where $\Cov^{\varrho(t)}_{uu},\Cov^{\varrho(t)}_{up},\Cov^{\varrho(t)}_{pp}$ are the corresponding covariance matrices, as defined in \eqref{eqn:mean_weights_rho}.
\end{cor}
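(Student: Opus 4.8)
The plan is to obtain the three covariance bounds as a direct corollary of the second-moment estimates in Lemma~\ref{lem:2}, via the elementary fact that the operator norm of a symmetric positive semidefinite matrix is controlled by its trace. Write $\overline{u}=\EE^{\varrho(t)}(u)$ and $\overline{\MCG}=\EE^{\varrho(t)}(\MCG(u))$. First I would note that Jensen's inequality gives $|\overline{u}|^2\le\EE^{\varrho(t)}|u|^2$ and $|\overline{\MCG}|^2\le\EE^{\varrho(t)}|\MCG(u)|^2$, hence
\[
\EE^{\varrho(t)}|u-\overline{u}|^2=\EE^{\varrho(t)}|u|^2-|\overline{u}|^2\le\EE^{\varrho(t)}|u|^2\le C,\qquad
\EE^{\varrho(t)}|\MCG(u)-\overline{\MCG}|^2\le\EE^{\varrho(t)}|\MCG(u)|^2\le C,
\]
where $C$ is the constant from Lemma~\ref{lem:2}, which holds uniformly for $t\in[0,1]$.

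Next, since $\Cov^{\varrho(t)}_{uu}$ and $\Cov^{\varrho(t)}_{pp}$ are symmetric positive semidefinite, I would use $\|M\|_2\le\mathrm{Tr}(M)$ to conclude
\[
\|\Cov^{\varrho(t)}_{uu}\|_2\le\mathrm{Tr}\,\Cov^{\varrho(t)}_{uu}=\EE^{\varrho(t)}|u-\overline{u}|^2\le C,\qquad
\|\Cov^{\varrho(t)}_{pp}\|_2\le\mathrm{Tr}\,\Cov^{\varrho(t)}_{pp}=\EE^{\varrho(t)}|\MCG(u)-\overline{\MCG}|^2\le C.
\]
For the off-diagonal block $\Cov^{\varrho(t)}_{up}=\EE^{\varrho(t)}\bigl((u-\overline{u})\otimes(\MCG(u)-\overline{\MCG})\bigr)$, which need not be symmetric, I would bound the operator norm by $\EE^{\varrho(t)}\bigl(|u-\overline{u}|\,|\MCG(u)-\overline{\MCG}|\bigr)$ and apply the Cauchy--Schwarz inequality:
\[
\|\Cov^{\varrho(t)}_{up}\|_2\le\sqrt{\EE^{\varrho(t)}|u-\overline{u}|^2}\,\sqrt{\EE^{\varrho(t)}|\MCG(u)-\overline{\MCG}|^2}\le C.
\]
Taking the largest of the three constants obtained above gives a single $C$, depending only on $\Lambda$, $|u_0|$, $\|\Gamma_0^{-1}\|_2$, $\|\Gamma^{-1}\|_2$ and $|y|$ as inherited from Lemma~\ref{lem:2}, valid for all $t\in[0,1]$.

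There is no genuine obstacle here: the whole corollary is a two-line reduction to Lemma~\ref{lem:2}. The only points requiring a touch of care are the matrix inequality $\|M\|_2\le\mathrm{Tr}(M)$ for the symmetric blocks (valid because they are positive semidefinite) and the tensor form of Cauchy--Schwarz for the cross term; both are completely standard.
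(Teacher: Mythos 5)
Your proof is correct and is exactly the argument the paper intends: the corollary is stated without proof as an ``immediate consequence'' of Lemma~\ref{lem:2}, and your reduction via $\mathrm{Tr}$ for the symmetric blocks and Cauchy--Schwarz for $\Cov^{\varrho(t)}_{up}$ is the standard way to make that implication explicit. No gaps.
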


These \emph{a priori} estimates are now used to bound the variance of the weights.
\begin{theorem}\label{thm:loosebound}
Under Assumptions \ref{Gassum1}, let $\{u^n_t,\omega^n_t\}^N_{n=1}$ solve \eqref{eqn:weki_SDE}. Then there exists a constant $C$ only depending on $\Lambda$, $|u_0|$, $\|\Gamma_0^{-1}\|_2$, $\|\Gamma^{-1}\|_2$ and $|y|$ such that for any $0\leq t\leq 1$. 
\begin{equation*}
\mathrm{Var}(N\omega^n_t) \leq C\,.
\end{equation*}
\end{theorem}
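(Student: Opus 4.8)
The plan is to reduce the weight variance to an exponential moment of the location process and then control that moment with elementary stochastic calculus. Since the system \eqref{eqn:weki_SDE} is decoupled, the pairs $\{(u^n_t,\omega^n_t)\}_n$ are i.i.d.; taking $f\equiv 1$ in Theorem~\ref{thm:consistent_wenki} gives $\sum_n\EE(\omega^n_t)=1$ and hence $\EE(N\omega^n_t)=1$, so that $\mathrm{Var}(N\omega^n_t)=\EE\bigl((N\omega^n_t)^2\bigr)-1$ and it suffices to bound the second moment. From the weight equation in \eqref{eqn:weki_SDE} with $N\omega^n_0=1$ we have the closed form $N\omega^n_t=\exp\!\left(\int_0^t(\mathcal{R}_1+\mathcal{R}_2+\mathcal{R}_3)(u^n_s,s)\,\rd s\right)$, so the task is to bound $\mathcal{R}_1+\mathcal{R}_2+\mathcal{R}_3$ from above.

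Inspecting \eqref{R1} and using Assumption~\ref{Gassum1} (bounds on $\nabla\MCG$, $\mathcal{H}(|\MCG|^2_\Gamma)$, $\partial_i\nabla\MCG$), Corollary~\ref{col:bound_Cov} (bounds on $\Cov^{\varrho(t)}_{uu},\Cov^{\varrho(t)}_{up},\Cov^{\varrho(t)}_{pp}$), and Lemma~\ref{lem:2} (which gives $|\overline{\MCG}^{\varrho(t)}|\le(\EE^{\varrho(t)}|\MCG|^2)^{1/2}\le C$), one checks: $|\mathcal{R}_1|\le C$ because every factor in it is bounded; $\mathcal{R}_2\le C$ because $\tfrac12|y-\overline{\MCG}^{\varrho(t)}|_\Gamma\le C$ while the term $-\tfrac12|y-\MCG(u)-\Cov^{\varrho(t)}_{pu}\mathcal{V}(u,t)|_\Gamma$ is non-positive; and $|\mathcal{R}_3|\le C(1+|\MCG(u)|)\le C(1+|u|)$, using $\|\mathcal{W}(u)\|\le C(|y|+|\MCG(u)|)$ together with the global Lipschitz bound $|\MCG(u)|\le|\MCG(\vec 0)|+\Lambda|u|$. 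Hence $\mathcal{R}_1+\mathcal{R}_2+\mathcal{R}_3\le C(1+|u|)$, and for $0\le t\le 1$ Jensen's inequality applied to $s\mapsto|u^n_s|$ on $[0,1]$ yields $(N\omega^n_t)^2\le e^{2C}\int_0^1\exp(2C|u^n_s|)\,\rd s$, so that $\EE\bigl((N\omega^n_t)^2\bigr)\le e^{2C}\sup_{0\le s\le 1}\EE\exp(2C|u^n_s|)$. All constants here depend only on $\Lambda$, $|u_0|$, $\|\Gamma^{-1}_0\|_2$, $\|\Gamma^{-1}\|_2$, $|y|$, as in Lemma~\ref{lem:2}.

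It remains to bound $\sup_{0\le s\le1}\EE\exp(\beta|u^n_s|)$ for $\beta>0$. In \eqref{eqn:weki_SDE} the drift $b(u,t)=\Cov^{\varrho(t)}_{up}\Gamma^{-1}(y-\MCG(u))$ is globally Lipschitz in $u$, uniformly in $t\in[0,1]$ (by Assumption~\ref{Gassum1} and Corollary~\ref{col:bound_Cov}), and the diffusion coefficient $\sigma(t)=\Cov^{\varrho(t)}_{up}\Gamma^{-1/2}$ is bounded and independent of $u$. Writing $u^n_t=u^n_0+\int_0^t b(u^n_s,s)\,\rd s+M^n_t$ with $M^n_t=\int_0^t\sigma(s)\,\rd W^n_s$, the linear-growth bound $|b|\le C(1+|u|)$ and a pathwise Grönwall argument give $\sup_{0\le s\le 1}|u^n_s|\le e^{C}\bigl(|u^n_0|+C+\sup_{0\le s\le1}|M^n_s|\bigr)$. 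Since each component of $M^n$ has deterministically bounded quadratic variation $\langle M^n_i\rangle_1\le C$, $\sup_{0\le s\le1}|M^n_s|$ has finite exponential moments of all orders (exponential martingale inequality, componentwise), and $u^n_0\sim\mathcal{N}(u_0,\Gamma_0)$ likewise does; Cauchy--Schwarz then gives $\EE\exp(\beta\sup_{0\le s\le1}|u^n_s|)<\infty$ with a bound depending only on $\beta$ and the parameters above. Taking $\beta=2C$ and combining with the previous display closes the proof, the final constant being allowed to grow exponentially in $|y|$ through $C$.

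The main obstacle is the linear, sign-indefinite growth of $\mathcal{R}_3$ in $|u|$ (coming from $\mathcal{W}(u)$), which rules out a naive pointwise bound on the weight rate and forces the detour through exponential moments of $u^n$. The delicate point in that detour is that the drift of \eqref{eqn:weki_SDE} only has linear growth, so it cannot be bounded uniformly; one must instead exploit that the diffusion coefficient is bounded — hence $\langle M^n\rangle$ is bounded — together with a pathwise Grönwall estimate. A secondary, bookkeeping, issue is to verify that every constant depends only on $\Lambda$, $|u_0|$, $\|\Gamma^{-1}_0\|_2$, $\|\Gamma^{-1}\|_2$, $|y|$, which follows by chaining the dependences recorded in Lemma~\ref{lem:2} and Corollary~\ref{col:bound_Cov}.
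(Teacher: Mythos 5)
Your argument is correct, and the reduction $\mathrm{Var}(N\omega^n_t)=\EE\bigl((N\omega^n_t)^2\bigr)-1$ together with bounding $\mathcal{R}_1$, the positive part of $\mathcal{R}_2$, and $\mathcal{R}_3$ matches the paper's strategy. The genuine divergence is in how $\mathcal{R}_3$ is handled, and it is worth noting because it is the whole point of the assumption $\|\mathcal{H}(|\MCG|^2_\Gamma)\|_2\le\Lambda$. Writing the $i$-th column of $\mathcal{W}$ as $(\partial_i\nabla\MCG)^\top\Gamma^{-1}y-(\partial_i\nabla\MCG)^\top\Gamma^{-1}\MCG(u)$, the second piece equals a linear combination of entries of $\mathcal{H}(|\MCG|^2_\Gamma)$ and $(\nabla\MCG)^\top\Gamma^{-1}\nabla\MCG$, both of which are uniformly bounded under Assumption~\ref{Gassum1}; hence the paper's bound \eqref{boundforwu} gives $\|\mathcal{W}(u)\|_2\le C$ \emph{uniformly in $u$}, so $\|\mathcal{R}_3\|_\infty\le C$, the whole weight rate is bounded, and a one-line Gr\"onwall on $\EE|\omega^n_t|^2$ closes the proof. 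You instead settle for $\|\mathcal{W}(u)\|_2\le C(|y|+|\MCG(u)|)$, accept linear growth $\mathcal{R}_3\le C(1+|u|)$, and compensate by establishing exponential moments of $\sup_{s\le1}|u^n_s|$ via the explicit exponential form of the weight, Jensen's inequality, a pathwise Gr\"onwall estimate exploiting the bounded, $u$-independent diffusion coefficient, and the exponential martingale inequality. This is heavier machinery but sound, and it even shows the theorem holds without the Hessian hypothesis in \eqref{eqn:assme_G_Lip} (you only use the bounds on $\nabla\MCG$ and $\partial_i\nabla\MCG$ plus Corollary~\ref{col:bound_Cov}); what it costs is roughly a page of stochastic calculus that the paper's observation about $\mathcal{W}$ renders unnecessary. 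Both routes yield the same qualitative constant, growing like $e^{C|y|^2}$ through Lemma~\ref{lem:2}.
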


\begin{remark}
We note that the result in Theorem~\ref{thm:loosebound} is not optimal. The constant, if traced carefully, blows up as $|y|\to\infty$ with a rate of at least $e^{|y|^2}$, as suggested in~\eqref{eqn:variance_const_exp}. Essentially this result does not demonstrate WEnKI superior than the classical IS. However, as will be shown in Theorem~\ref{thm:betterestimation}, under a stronger assumption (Assumption~\ref{Gassum2}), the dependence on $y$ could be removed.
\end{remark}

\begin{proof}
Note that
\[
\mathrm{Var}(N\omega^n_t) = \EE(N\omega_t^n-1)^2 = N^2\left(\EE|\omega^n_t|^2-\frac{1}{N^2}\right)\,,
\]
thus to prove the theorem, it suffices to show that 
\begin{equation}\label{omegajsquare}
N^2\EE|\omega^n_t|^2 \leq C\,
\end{equation}
with $C$ depending on $\Lambda$, $|u_0|$, $\|\Gamma_0^{-1}\|_2$, $\|\Gamma^{-1}\|_2$ and $|y|$.

Multiplying $\omega^n$ on both sides of the second equation of \eqref{eqn:weki_SDE} and taking expectation, we have 
\begin{equation}\label{omegainequality}
\begin{aligned}
\frac{\rd}{\rd t}\EE|\omega^n|^2 &\leq 2\EE\left\{\left[\mathcal{R}_1(u^n_t,t)+\mathcal{R}_2(u^n_t,t)+\mathcal{R}_3(u^n_t,t)\right]|w^n_t|^2\right\}\\
&\leq 2\left(\|\mathcal{R}_1\|_{\infty}+\frac{1}{2}\left(y-\overline{\MCG}^{\varrho}\right)^\top\Gamma^{-1}\left(y-\overline{\MCG}^{\varrho}\right)+\|\mathcal{R}_3\|_{\infty}\right)\EE|\omega^n|^2,
\end{aligned}.
\end{equation}
where we have omitted the last three terms in $\mathcal{R}_2$ because the sum of them is negative. We then bound the three terms in bracket separately. As a preparation, we note that
\[
\mathrm{Tr}\left\{\Cov^{\varrho(t)}_{pp}\Gamma^{-1}\right\}=\int (\MCG(u)-\overline{\MCG})^\top\Gamma^{-1}(\MCG(u)-\overline{\MCG}) \varrho(u,t)\rd u\leq \EE^{\varrho(t)}|\MCG(u)-\overline{G}|^2\|\Gamma^{-1}\|_2\,,
\]
and
\[
\begin{aligned}
\mathrm{Tr}\left\{\Cov^{\varrho(t)}_{up}\Gamma^{-1}\right\}&=\int (u-\overline{u})^\top \Gamma^{-1}(\MCG(u)-\overline{\MCG}) \varrho(u,t)\rd u\\
&\leq \left(\EE^{\varrho(t)}|u-\overline{u}||\MCG(u)-\overline{\MCG}|\right)\|\Gamma^{-1}\|_2\,.
\end{aligned}
\] 
Apply these inequalities to estimate $\mathcal{R}_k$ defined in~\eqref{R1}, we arrive at the following bounds.

\begin{equation}\label{boundr1}
\begin{aligned}
&|\mathcal{R}_1(u,t)|\\
\leq &\frac{\|\Gamma^{-1}\|_2}{2}\Biggl\{\EE^{\varrho(t)}|\MCG(u)-\overline{\MCG}^{\varrho(t)}|^2+\left[2\Lambda+\|\Cov^{\varrho(t)}_{up}\|_2\left(t\|\Gamma^{-1}\|_2\Lambda^2+\|\Gamma^{-1}_0\|_2\right)\right] \\
& \hspace{8em} \times \left(\EE^{\varrho(t)}|u-\overline{u}^{\varrho(t)}||\MCG(u)-\overline{\MCG}^{\varrho(t)}|\right)\Biggr\}\\
\leq &\frac{\|\Gamma^{-1}\|_2}{2}\Bigl\{\mathrm{Tr}(\Cov^{\varrho(t)}_{pp})+\left[2\Lambda+\|\Cov^{\varrho(t)}_{up}\|_2\left(t\|\Gamma^{-1}\|_2\Lambda^2+\|\Gamma^{-1}_0\|_2\right)\right] \\
&\hspace{8em} \times \mathrm{Tr}(\Cov^{\varrho(t)}_{pp})^{1/2}\mathrm{Tr}(\Cov^{\varrho(t)}_{uu})^{1/2}\Bigr\}\\
\leq &C\,,
\end{aligned}
\end{equation}
where the last inequality comes from Corollary~\ref{col:bound_Cov}.

For the non-negative contribution from $\mathcal{R}_2$, we have 
\begin{equation}\label{boundr2}
\begin{aligned}
\left(y-\overline{\MCG}^{\varrho(t)}\right)^\top\Gamma^{-1}\left(y-\overline{\MCG}^{\varrho(t)}\right)&\leq \|\Gamma^{-1}\|_2\left|y-\overline{\MCG}^{\varrho(t)}\right|^2\\
&\leq 2\|\Gamma^{-1}\|_2\left(|y|^2+\EE^{\varrho(t)}|\MCG(u)|^2\right)\leq C\,,
\end{aligned}
\end{equation}
where the last inequality comes from Lemma~\ref{lem:2}.

Finally, for $\mathcal{R}_3$, we have 
\begin{equation}\label{boundr3}
\begin{aligned}
|\mathcal{R}_3(u,t)|&\leq \frac{1}{2} t\|\Gamma^{-1}\|_2\|\Cov_{up}^{\varrho(t)}\|_2\|\mathcal{W}(u)\|_2 \left(\EE^{\varrho(t)}|u-\overline{u}^{\varrho(t)}||\MCG(u)-\overline{\MCG}^{\varrho(t)}|\right)\\
&\leq \frac{1}{2} t\|\Gamma^{-1}\|_2\|\Cov_{up}^{\varrho(t)}\|_2\|\mathcal{W}(u)\|_2\mathrm{Tr}(\Cov^{\varrho(t)}_{pp})^{1/2}\mathrm{Tr}(\Cov^{\varrho(t)}_{uu})^{1/2}\\
&\leq C\,,
\end{aligned}
\end{equation}
where we have used Corollary~\ref{col:bound_Cov}, and that, by definition of $\mathcal{W}$,
\begin{equation}\label{boundforwu}
\begin{aligned}
\|\mathcal{W}(u)\|_2&\leq \|\mathcal{W}(u)\|_F\\
&\leq\frac{L}{2}\left(\|\mathcal{H}\left(|\MCG|^2_{\Gamma}\right)\|_2+\|\Gamma^{-1}\|_2\|\nabla \MCG\|^2_2\right)+|y|\|\Gamma^{-1}\|_2\max_{1\leq i\leq L}\{\|\partial_i\nabla \MCG\|_2\}\leq C\,. 
\end{aligned}
\end{equation}

All the constants above depend on $\Lambda$, $|u_0|$, $\|\Gamma_0^{-1}\|_2$, $\|\Gamma^{-1}\|_2$ and $|y|$. Substitute these into \eqref{omegainequality}, we have
\[
\rd \EE|\omega^n_t|^2\leq C\EE|\omega^n_t|^2\,.
\]
Realizing that $\omega^n_0=\frac{1}{N}$ so that $\EE|\omega^n_0|^2=\frac{1}{N^2}$, we obtain 
\[
\EE|\omega^n_t|^2\leq \frac{e^{Ct}}{N^2}\,.
\]
This concludes~\eqref{omegajsquare} and this theorem.
\end{proof}

\subsubsection{Weak nonlinearity under Assumption~\ref{Gassum2}} 
The variance bound can be improved when we assume further structure of the nonlinearity, namely, when the nonlinear component $\Rm(u)$ is perpendicular to the range of the linear component $\mathsf{A}$, weighted by $\Gamma^{-1/2}$. In particular, the bound becomes independent of $y$, as shown in the following theorem. 
\begin{theorem}\label{thm:betterestimation}
Under Assumption \ref{Gassum2}, there exists a finite constant $C$ depending on $\Lambda_1$, $\Lambda$, $\|\Gamma^{-1}\|_2$, $\|\Gamma^{-1}_0\|_2$, $|\mathsf{r}|$, 
$M$, and $|u^\ast_{\mathsf{A}}|$, such that
\begin{equation}\label{estimate:bettervarththm}
\|\mathrm{Var}(N\omega^n_t)\|_{{L^\infty}[0,1]}\leq C\,.
\end{equation}
Furthermore,
\begin{equation}\label{eqn:ass2_estimate_bound}
\lim_{\Lambda_1\rightarrow0} C\leq C_1\,,
\end{equation}
where $C_1$ only depends on $\Lambda$, $\|\Gamma_0^{-1}\|_2$, $\|\Gamma^{-1}\|_2$.
\end{theorem}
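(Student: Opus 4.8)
The plan is to re-run the proof of Theorem~\ref{thm:loosebound}, replacing the $|y|$-dependent a~priori estimates (Lemma~\ref{lem:2} and Corollary~\ref{col:bound_Cov}) with $|y|$-free versions that exploit the structure in Assumption~\ref{Gassum2}. Recall from the weight equation in \eqref{eqn:weki_SDE} that along a trajectory $N w^n_t=\exp\!\big(\int_0^t(\mathcal R_1+\mathcal R_2+\mathcal R_3)(u^n_s,s)\,\rd s\big)$, so, exactly as in Theorem~\ref{thm:loosebound}, it suffices to bound $\|\mathcal R_1(\cdot,t)\|_\infty$, $\|\mathcal R_3(\cdot,t)\|_\infty$ and the deterministic nonnegative part $\tfrac12|y-\overline{\MCG}^{\varrho(t)}|^2_\Gamma$ of $\mathcal R_2$ by a constant $C$ free of $|y|$, and then conclude by Gr\"onwall with $N^2\EE|\omega^n_0|^2=1$.

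The structural input is the factorization already recorded after \eqref{eqn:rho_A}: since $\Gamma^{-1/2}\Rm(u)$ and $\Gamma^{-1/2}\mathsf r$ both lie in the orthogonal complement of the range of $\Gamma^{-1/2}\mathsf A$, one has $|y-\MCG(u)|^2_\Gamma=|\mathsf A u^\ast_{\mathsf A}-\mathsf Au|^2_\Gamma+|\mathsf r-\Rm(u)|^2_\Gamma$ with $|\mathsf r-\Rm(u)|^2_\Gamma\le 2\|\Gamma^{-1}\|_2(|\mathsf r|^2+M^2)=:2a$, uniformly in $u$. Comparing the normalizations of $\rho(u,t)$ in \eqref{eqn:rho_t} and of the Gaussian $\rho_{\mathsf A}(u,t)$ in \eqref{eqn:rho_A} then gives $e^{-a}\rho_{\mathsf A}(u,t)\le \rho(u,t)\le e^{a}\rho_{\mathsf A}(u,t)$ for all $u$ and all $t\in[0,1]$. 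Hence $\EE^{\varrho(t)}(g)\le e^{a}\EE^{\rho_{\mathsf A}(t)}(g)$ for every $g\ge0$, and since $\rho_{\mathsf A}(t)$ is Gaussian with mean and covariance given explicitly in \eqref{eqn:mean_variance} — the covariance $(t\mathsf A^\top\Gamma^{-1}\mathsf A+\Gamma_0^{-1})^{-1}$ and the mean both bounded uniformly in $t\in[0,1]$ by the data of the linear problem — the moments $\EE^{\varrho(t)}|u|^2$, $\EE^{\varrho(t)}|\MCG(u)|^2$ (via $\MCG=\mathsf Au+\Rm$, $\|\mathsf A\|_2\le\Lambda$, $|\Rm|\le M$), and consequently $\|\mathrm{Cov}^{\varrho(t)}_{uu}\|_2$, $\|\mathrm{Cov}^{\varrho(t)}_{up}\|_2$, $\|\mathrm{Cov}^{\varrho(t)}_{pp}\|_2$, are all bounded uniformly in $t$ by constants independent of $|y|$. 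This is the promised $|y|$-free replacement of Lemma~\ref{lem:2} and Corollary~\ref{col:bound_Cov}.

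With these bounds, the estimate \eqref{boundr1} for $\mathcal R_1$ goes through verbatim and yields a $|y|$-free constant; for the nonnegative part of $\mathcal R_2$ one writes $y-\overline{\MCG}^{\varrho(t)}=\mathsf r+\mathsf A(u^\ast_{\mathsf A}-\overline u^{\varrho(t)})-\EE^{\varrho(t)}(\Rm)$ and uses the moment bounds, so that $|\mathsf r|$, $|u^\ast_{\mathsf A}|$, $M$ take over the role of $|y|$ in \eqref{boundr2}. The only term needing a new idea is $\mathcal R_3$, where $\mathcal W(u)$ contains $y-\MCG(u)=O(|u|)$. Here one differentiates the orthogonality condition twice: $\Gamma^{-1/2}\Rm(u)\perp\mathrm{Range}(\Gamma^{-1/2}\mathsf A)$ for every $u$ forces $\Gamma^{-1/2}\partial_i\partial_j\Rm(u)\perp\mathrm{Range}(\Gamma^{-1/2}\mathsf A)$, and since $\partial_i\nabla\MCG=\partial_i\nabla\Rm$ and $\mathsf A^\top\Gamma^{-1}\mathsf r=0$, the entries of $\mathcal W(u)$ collapse to $(\partial_i\nabla\Rm(u))^\top\Gamma^{-1}(\mathsf r-\Rm(u))$ — the unbounded component $\mathsf A(u^\ast_{\mathsf A}-u)$ cancels, so $\|\mathcal W(u)\|_2\le L\Lambda\|\Gamma^{-1}\|_2(|\mathsf r|+M)$ uniformly in $u$. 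Feeding this into \eqref{boundr3} with the $|y|$-free covariance bounds gives $\|\mathcal R_3(\cdot,t)\|_\infty\le C$, and Gr\"onwall yields \eqref{estimate:bettervarththm}.

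For the refinement \eqref{eqn:ass2_estimate_bound}, one tracks how the constant $C$ depends on the data: every occurrence of $|\mathsf r|$, $M$, $|u^\ast_{\mathsf A}|$ is attached either to $\nabla\Rm$ or to $\partial_i\nabla\Rm$, which — together with the non-Gaussian corrections to the covariance matrices — vanish as $\Rm$ degenerates to an affine map, i.e.\ as $\Lambda_1\to0$, so in the limit only $\Lambda$, $\|\Gamma_0^{-1}\|_2$, $\|\Gamma^{-1}\|_2$ survive. I expect this last bookkeeping step — and confirming that the cancellation in $\mathcal W$ genuinely removes all growth in $|y|$ — to be the most delicate part of the argument; the remainder is a routine re-derivation of the proof of Theorem~\ref{thm:loosebound} with the comparability $\rho\sim\rho_{\mathsf A}$ installed as the new a~priori input.
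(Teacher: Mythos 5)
Your argument for the main bound \eqref{estimate:bettervarththm} is sound and takes a genuinely different (and in one respect more complete) route than the paper. The paper does not use a scalar Gr\"onwall with sup-norm bounds on $\mathcal R_1,\mathcal R_2,\mathcal R_3$; instead it proves the pointwise estimates of Lemma~\ref{lem:boundforR1R3} and runs the coupled $2\times 2$ Gr\"onwall system of Proposition~\ref{thm:newweightbound} for the pair $\bigl(\EE|u^n_t|^2(N\omega^n_t)^2,\ \mathrm{Var}(N\omega^n_t)+1\bigr)$. Your two key ingredients — the two-sided comparison $e^{-a}\rho_{\mathsf A}\le\rho\le e^{a}\rho_{\mathsf A}$ yielding $|y|$-free moment and covariance bounds, and the cancellation $(\partial_i\nabla\Rm)^\top\Gamma^{-1}\mathsf A=0$ obtained by differentiating the orthogonality condition twice (which collapses $\mathcal W$ to $(\partial_i\nabla\Rm)^\top\Gamma^{-1}(\mathsf r-\Rm)$) — are both correct; the second is exactly what the paper invokes when it rewrites $\mathcal W$ "by plugging in \eqref{linear}, \eqref{eqn:errorsfr}, \eqref{r:orthorgonal}", and the first supplies the uniform-in-$t$, $|y|$-free bounds on $\mathrm{Var}^{\rho(t)}(u)$, $|\overline u^{\rho(t)}|$, etc.\ that the paper needs in order to integrate $W(t)$ over $[0,1]$ but never states explicitly. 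For the first claim of the theorem your scalar argument is simpler and complete.

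The gap is in the limit claim \eqref{eqn:ass2_estimate_bound}, and it is not merely bookkeeping. Your bound on $\mathcal R_2$ keeps only the nonnegative term, $\mathcal R_2\le\tfrac12|y-\overline{\MCG}^{\rho(t)}|^2_\Gamma=\tfrac12|\mathsf A(u^\ast_{\mathsf A}-\overline u^{\rho(t)})|^2_\Gamma+\tfrac12|\mathsf r-\overline\Rm|^2_\Gamma$. None of $|u^\ast_{\mathsf A}|$, $|\mathsf r|$, $M$ here is "attached to $\nabla\Rm$ or $\partial_i\nabla\Rm$": this quantity is of order $|u^\ast_{\mathsf A}|^2+|\mathsf r|^2+M^2$ even when $\mathcal G$ is exactly linear, so your Gr\"onwall constant remains of size $e^{c(|u^\ast_{\mathsf A}|^2+|\mathsf r|^2+M^2)}$ uniformly as $\Lambda_1\to0$ — precisely the IS-type blowup that \eqref{eqn:ass2_estimate_bound} asserts disappears. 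The point is that in the linear limit $\mathcal R_1+\mathcal R_2+\mathcal R_3\equiv0$ (EnKI is exact there), so the positive and negative quadratic terms inside $\mathcal R_2$ cancel, and any one-sided sup bound destroys this. The paper's refined estimate \eqref{R2bestestimation} is built to exhibit the cancellation: every surviving term carries a factor such as $\|I-(\Cov_{\mathsf A})^{-1}\Cov^{\rho(t)}_{u,u}\|_2$, $\|\Cov^{\rho(t)}_{\Rm,u}\|_2$ or $|\overline u^{\rho(t)}-\Cov^{\rho(t)}_{u,u}(\Cov_{\mathsf A})^{-1}u_{\mathsf A}|$, all of which vanish as $\Lambda_1\to0$. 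The price is an explicit factor $|u^n_t|$ in \eqref{R2bestestimation}, which is why the paper must track $\EE|u^n_t|^2(N\omega^n_t)^2$ alongside the weight variance and close a coupled system rather than a scalar one. To repair your proof of the second claim you would need to reproduce this finer decomposition of $\mathcal R_2$ (and hence, in some form, the coupled Gr\"onwall argument); the scalar route cannot see the cancellation.
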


\begin{remark}
This theorem is a counterpart of Theorem~\ref{thm:loosebound}, but stronger assumption on the nonlinearity is added. As a result, the variance of weight is bounded, independent of $y$. In the most extreme case, suppose $\mathcal{G}$ is entirely linear, $\Lambda_1=0$, then according to the theorem, the variance is bounded by a fixed constant. As a comparison, if one applies Important Sampling directly, for large $y$ and thus large $u^\ast$, the variance blows up at the order of $\mathcal{O}(e^{|u^\ast|^2})$, equivalently to $\mathcal{O}(e^{|y|^2})$ for reasonably conditioned $\mathsf{A}$. This means that under mild conditions (Assumption~\ref{Gassum2}), the newly proposed WEnKI method significantly reduces the weight variance from the classical method IS.
\end{remark}

The proof of the theorem is largely based on the following calculation.
\begin{proposition}\label{thm:newweightbound} 
Under Assumption \ref{Gassum2}, let $\{u^n_t,\omega^n_t\}^N_{n=1}$ solve \eqref{eqn:weki_SDE}, we have
\begin{equation}\label{betterboundforvarianceofweight}
\frac{\rd }{\rd t}\left(
\begin{aligned}
& \EE|u^n_t|^2(N\omega^n_t)^2\\
&\mathrm{Var}(N\omega^n_t)+1
\end{aligned}
\right) \leq CW(t)\left(
\begin{aligned}
& \EE|u^n_t|^2(N\omega^n_t)^2\\
&\mathrm{Var}(N\omega^n_t)+1
\end{aligned}
\right),\quad \forall 0\leq t\leq 1.
\end{equation}
where $W(t)$ is a $2\times 2$ matrix defined by
\begin{align*}
W_{1,1}(t) & =C\left[(\mathrm{Var}^{\rho(t)}(u))^2+\mathrm{Var}^{\rho(t)}(u)+|u^\ast_{\mathsf{A}}|\mathrm{Var}^{\rho(t)}(u)+\left|u^\ast_{\mathsf{A}}-\overline{u}^{\rho(t)}\right|^2+1\right]\,,\\
W_{1,2}(t) & =C|\mathrm{Var}^{\rho(t)}(u)|\left[\mathrm{Var}^{\rho(t)}(u)+\left|u^\ast_{\mathsf{A}}\right|+1\right]\,, \\
W_{2,1}(t)&=C\left(\left|\overline{u}^{\rho(t)}-u^\ast_{\mathsf{A}}\right|\left\|I-(\Cov_\mathsf{A})^{-1}\Cov^{\rho(t)}_{u,u}\right\|_2+\|\Cov^{\rho(t)}_{\Rm,u}\|_2\right)\,,\\
\intertext{and}
W_{2,2}(t)&=C\left[\left|\overline{u}^{\rho(t)}-u^\ast_{\mathsf{A}}\right|\left(\left|\overline{u}^{\rho(t)}-\Cov^{\rho(t)}_{u,u}(\Cov_\mathsf{A}(t))^{-1}u_\mathsf{A}\right|\right.\right.\\
&\qquad+\left.\left\|I-(\Cov_\mathsf{A})^{-1}\Cov^{\rho(t)}_{u,u}\right\|_2+\|\Cov_{u,u}^{\rho(t)}\|_2\Lambda_1\right)\\
&\qquad\left.+(\mathrm{Var}^{\rho(t)}(u))^2+\mathrm{Var}^{\rho(t)}(u)\right]+\|\Cov^{\rho(t)}_{\Rm,u}\|_2(|u_\mathsf{A}|+\Lambda_1+1)\,,
\end{align*}
where $\mathrm{Var}^{\rho(t)}(u)=\mathrm{Tr}\left(\Cov^{\rho(t)}_{u,u}\right)$ and $C$ is a constant depending on $\Lambda$, $\|\Gamma^{-1}\|_2$, $\|\Gamma^{-1}_0\|_2$, $|\mathsf{r}|$, and $M$.
\end{proposition}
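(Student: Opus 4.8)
The plan is to write down the two ordinary differential equations governing the components of the vector in~\eqref{betterboundforvarianceofweight} and then bound their right‑hand sides by $CW(t)$ acting on the same vector. Abbreviate $v^n_t=N\omega^n_t$, and note first that Theorem~\ref{thm:consistent_wenki} with $f\equiv 1$ gives $\EE(v^n_t)=1$, so $\mathrm{Var}(N\omega^n_t)+1=\EE(v^n_t)^2$. The $u$‑equation in~\eqref{eqn:weki_SDE} has drift $b_t=\Cov^{\rho(t)}_{up}\Gamma^{-1}(y-\MCG(u^n_t))$ and diffusion $\sigma_t=\Cov^{\rho(t)}_{up}\Gamma^{-1/2}$, while $w^n_t$ has no diffusion, so It\^o's formula yields
\begin{align*}
\tfrac{\rd}{\rd t}\EE(v^n_t)^2 &= 2\,\EE\big[(\mathcal{R}_1+\mathcal{R}_2+\mathcal{R}_3)(u^n_t,t)\,(v^n_t)^2\big],\\
\tfrac{\rd}{\rd t}\EE\big(|u^n_t|^2(v^n_t)^2\big) &= \EE\big[\big(2\,u^n_t\cdot b_t+\mathrm{Tr}(\sigma_t\sigma_t^\top)\big)(v^n_t)^2\big]+2\,\EE\big[|u^n_t|^2(\mathcal{R}_1+\mathcal{R}_2+\mathcal{R}_3)(u^n_t,t)\,(v^n_t)^2\big].
\end{align*}
Lipschitz continuity of $\MCG$ under Assumption~\ref{Gassum2} makes the SDE well posed, and a standard localisation handles the local martingale terms.

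The core of the argument is to bound, \emph{pointwise in} $u=u^n_t$, the scalar coefficients $\mathcal{R}_1+\mathcal{R}_2+\mathcal{R}_3$, $\mathrm{Tr}(\sigma_t\sigma_t^\top)$ and $2\,u\cdot b_t$ by expressions of the form $a(t)+c(t)|u|+d(t)|u|^2$, with $a,c,d$ built from the $\rho(t)$‑moments appearing in $W(t)$, and to check that \emph{no power $|u|^3$ or higher survives}. I would use the splitting $\MCG(u)=\mathsf{A}u+\Rm(u)$ together with $y=\mathsf{A}u^\ast_{\mathsf{A}}+\mathsf{r}$, so that $y-\MCG(u)=\mathsf{A}(u^\ast_{\mathsf{A}}-u)+(\mathsf{r}-\Rm(u))$ with the two summands $\Gamma$‑orthogonal by~\eqref{r:orthorgonal} and Assumption~\ref{Gassum2}. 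For $\mathcal{R}_2$ the decisive fact is that $\mathcal{V}_{\mathsf{A}}(u,t):=t\mathsf{A}^\top\Gamma^{-1}(\mathsf{A}u^\ast_{\mathsf{A}}-\mathsf{A}u)-\Gamma_0^{-1}(u-u_0)=(\Cov_{\mathsf{A}}(t))^{-1}(u_{\mathsf{A}}(t)-u)$ by~\eqref{eqn:mean_variance}, so that for the Gaussian $\rho_{\mathsf{A}}$ (for which $\Cov_{pu}=\mathsf{A}\Cov_{\mathsf{A}}$) one gets the exact cancellation $\Cov_{pu}\mathcal{V}_{\mathsf{A}}(u,t)=\mathsf{A}(u_{\mathsf{A}}(t)-u)$ and hence $y-\MCG(u)-\Cov_{pu}\mathcal{V}(u,t)=\mathsf{A}(u^\ast_{\mathsf{A}}-u_{\mathsf{A}}(t))+\mathsf{r}$, which is independent of $u$, so $\mathcal{R}_2\equiv 0$ in the purely linear case. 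Expanding $\mathcal{R}_2$ around this identity, the genuine nonlinearity enters only through (i) $\Cov^{\rho(t)}_{u,u}$ in place of $\Cov_{\mathsf{A}}(t)$, (ii) the cross‑covariance $\Cov^{\rho(t)}_{\Rm,u}$ inside $\Cov^{\rho(t)}_{pu}$, (iii) the gradient $\nabla\Rm$, of size $\Lambda_1$, and (iv) the shift $\overline u^{\rho(t)}-u_{\mathsf{A}}(t)$; one more use of $\Gamma$‑orthogonality keeps each of these at most linear in $u$, while every negative square is simply discarded. The surviving coefficients are exactly the $\|I-(\Cov_{\mathsf{A}})^{-1}\Cov^{\rho(t)}_{u,u}\|_2$, $\|\Cov^{\rho(t)}_{\Rm,u}\|_2$, $\Lambda_1$, $|\overline u^{\rho(t)}-u^\ast_{\mathsf{A}}|$, $(\mathrm{Var}^{\rho(t)}(u))^{2}$, $\mathrm{Var}^{\rho(t)}(u)$ factors displayed in $W(t)$.

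For $\mathcal{R}_3$ the only subtlety is that $\|\mathcal{W}(u)\|_2$ must be bounded \emph{uniformly in} $u$: differentiating $\Gamma^{-1/2}\Rm(u)\perp\Gamma^{-1/2}\mathsf{A}u$ twice gives $(\partial_i\partial_j\Rm(u))^\top\Gamma^{-1}\mathsf{A}u=-(\partial_i\Rm(u))^\top\Gamma^{-1}\mathsf{A}e_j-(\partial_j\Rm(u))^\top\Gamma^{-1}\mathsf{A}e_i$, bounded by $2\Lambda\Lambda_1\|\Gamma^{-1}\|_2$, so the part of $\mathcal{W}(u)$ that \emph{looks} linear in $u$ is actually bounded, and $\mathcal{R}_3$ contributes only $\|\Cov^{\rho(t)}_{\Rm,u}\|_2$‑type factors. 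The terms $\mathcal{R}_1$ and $\mathrm{Tr}(\sigma_t\sigma_t^\top)=\mathrm{Tr}(\Cov^{\rho(t)}_{up}\Gamma^{-1}\Cov^{\rho(t)}_{pu})$ are bounded directly from $\|\nabla\MCG\|_2\le 2\Lambda$ and the covariance identities, yielding the $(\mathrm{Var}^{\rho(t)}(u))^2+\mathrm{Var}^{\rho(t)}(u)$ contributions, and $2\,u\cdot b_t$, after inserting the splitting of $y-\MCG(u)$, produces a bounded‑coefficient $|u|^2$ term feeding $W_{1,1}$ together with linear‑in‑$u$ terms carrying the $|u^\ast_{\mathsf{A}}|$ factor. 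Finally, every term proportional to $|u^n_t|(v^n_t)^2$ is split by Young's inequality, $|u^n_t|(v^n_t)^2\le\tfrac12|u^n_t|^2(v^n_t)^2+\tfrac12(v^n_t)^2$, distributing it between the two components; collecting coefficients gives precisely the matrix $W(t)$ of~\eqref{betterboundforvarianceofweight}. I expect the main obstacle to be the bookkeeping in the expansion of $\mathcal{R}_2$: one must track which perturbation carries which small or bounded factor so the outcome matches the stated $W(t)$, and one must repeatedly and correctly invoke the orthogonality $\Gamma^{-1/2}\Rm\perp\Gamma^{-1/2}\mathsf{A}u$ and its first two derivatives to guarantee that no term of degree $\ge 3$ in $|u^n_t|$ appears — otherwise the $2\times 2$ system would fail to close.
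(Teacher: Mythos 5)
Your proposal follows essentially the same route as the paper's proof in Appendix B: It\^o's formula for $\EE|u^n_t|^2(\omega^n_t)^2$ and $\EE(\omega^n_t)^2$, the splitting $\MCG(u)=\mathsf{A}u+\Rm(u)$ with the $\Gamma$-orthogonality of $\mathsf{r}-\Rm$ to the range of $\mathsf{A}$, the identity $\mathcal{V}(u,t)=(\Cov_\mathsf{A}(t))^{-1}(u_\mathsf{A}-u)+t(\nabla\Rm)^\top\Gamma^{-1}(\mathsf{r}-\Rm)$ driving the refined bound on $\mathcal{R}_2$ (which indeed vanishes in the purely linear case), crude variance-power bounds on $\mathcal{R}_1,\mathcal{R}_3$, and Young's inequality $|u|(\omega)^2\le\tfrac12|u|^2(\omega)^2+\tfrac12(\omega)^2$ to close the $2\times2$ system, with $\EE(N\omega^n_t)=1$ converting $\EE(N\omega^n_t)^2$ into $\mathrm{Var}(N\omega^n_t)+1$. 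The plan is correct and matches the paper's argument in all essential respects.
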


The proof for the proposition is deferred to Appendix \ref{Appendix1}. We now give the proof for Theorem~\ref{thm:betterestimation} based on the above proposition.

\begin{proof}[Proof of Theorem~\ref{thm:betterestimation}]
For fixed $1\leq n\leq N$, let
\[
p(t)=\EE|u^n_t|^2(N\omega^n_t)^2,\quad q(t)=\mathrm{Var}(N\omega^n_t)+1\,.
\]
Since $\omega^n_0 =\frac{1}{N}$,
\[
p(0)=\EE_{\rho_{\pri}}|u|^2,\quad q(0)=1\,.
\]
According to Proposition~\ref{thm:newweightbound},
\[
\frac{\rd}{\rd t}\left(
\begin{aligned}
&p(t)\\
&q(t)
\end{aligned}
\right)\leq W(t)\left(
\begin{aligned}
&p(t)\\
&q(t)
\end{aligned}
\right)\,,
\]
which implies \eqref{estimate:bettervarththm}. If $\Lambda_1\rightarrow0$, nonlinear function $\Rm(u)$ is almost a constant. Therefore, we also have $\overline{u}^{\rho(t)}\rightarrow u_\mathsf{A}(t)$, $\Cov^{\rho(t)}_{u,u}\rightarrow\Cov_\mathsf{A}(t)$ and $\|\Cov^{\rho(t)}_{\Rm,u}\|_2\rightarrow0$, then the coefficients for $q$ satisfy:
\[
\lim_{\Lambda_1\rightarrow0} W_{2,1}(t)=0,\quad \lim_{\Lambda_1\rightarrow0} W_{2,2}(t)=(\mathrm{Tr}\left(\Cov_\mathsf{A}\right))^2+\mathrm{Tr}\left(\Cov_\mathsf{A}\right)\,.
\]
Then~\eqref{eqn:ass2_estimate_bound} is a direct consequence, concluding the theorem.
\end{proof}

\subsection{EnKI with nonlinear forward map}\label{sec:diff}
In this section, we study a slightly different topic: how different are WEnKI and EnKI?
In fact, it was proved in~\cite{ding2019ensemble} that EnKI is not a consistent sampling method when the forward map is nonlinear. The algorithm, without the weight, can be regarded as the discrete version of PDE~\eqref{eqn:eki_PDE}, but the target distribution $\rho(u,t)$ is not the solution to the PDE, and hence EnKI is inconsistent. 

It is numerically observed, however, that despite being inconsistent, EnKI mysteriously performs rather well~\cite{Reich2011}, especially when the target distribution is almost Gaussian-like, no matter how nonlinear $\mathcal{G}$ is, also see the book~\cite{reich_cotter_2015} for more examples. To the best of our knowledge, such discrepancy in terms of theoretical and practical performance,  has not been addressed in literature. In this subsection, as a first attempt to explain it, we provide one criterion, under which, EnKI performs similarly well as WEnKI.

The argument in the end comes down to comparing the continuous version of WEnKI and EnKI, two Fokker-Planck equations, with the former one having a weight term while the latter not.

Once again we denote $\rho$ the target distribution, defined in~\eqref{eqn:rho_t} and proved to be the solution to equation~\eqref{eqn:weki_PDE} in Theorem~\ref{thm:consistent_wenki}, and let $\varrho$ the solution to the Fokker-Planck equation without the weight:
\begin{equation}\label{eqn:middleeki_PDEthm1}
\left\{
\begin{aligned}
&\partial_t\varrho(u,t)+\nabla_u\cdot\left(\left(y-\mathcal{G}(u)\right)^\top\Gamma^{-1}\mathrm{Cov}^{\varrho(t)}_{pu}\varrho\right)=\frac{1}{2}\mathrm{Tr}\left(\mathrm{Cov}^{\varrho(t)}_{up}\Gamma^{-1}\mathrm{Cov}^{\varrho(t)}_{pu}\mathcal{H}_u\varrho\right)\\
&\varrho(u,0)=\rho_{\pri}
\end{aligned}
\right.\ ,
\end{equation}
where $\mathrm{Cov}^{\varrho(t)}_{up}$, and $\mathrm{Cov}^{\varrho(t)}_{pu}$ are covariance of $(u,\mathcal{G})$ and $(\mathcal{G},u)$ in $\varrho(u,t)$. It was proved in~\cite{ding2019ensemble} that \eqref{eqn:middleeki_PDEthm1} is the mean-field limit of EnKI.

We will now show that $\rho$ and $\varrho$ are close when the weight term (defined in~\eqref{R1})
\[
\mathcal{W}(u,t)=\mathcal{R}_1(u,t)+\mathcal{R}_2(u,t)+\mathcal{R}_3(u,t)
\]
is small. This means that WEnKI and EnKI give more or less the same results when the weight term is small. We recall the  bounded Lipschitz metric $(d_{BL})$ between probability measures:
\[
d_{BL}(\mu,\nu)=\sup_{f\in\mathrm{Lip}(\mathbb{R}^L)}\left|\int_{\mathbb{R}^L}fd\mu-\int_{\mathbb{R}^L}fd\nu\right|\,,    
\]
where 
\[
\mathrm{Lip}(\mathbb{R}^L)=\Bigl\{f\in \mathrm{C}_b\,:\,\sup_x |f(x)|\leq1,\,\sup_{x\neq y}\frac{|f(x)-f(y)}{|x-y|}\leq 1\Bigr\}\,.
\]
Since the admissible set in the supremum is smaller than the class of Lipschitz-$1$ function and $1$-bounded function, this metric can be bounded by $L^2$-Wasserstein distance $W_2(\mu,\nu)$ and total variation $\mathrm{TV}(\mu,\nu)$
\begin{equation}\label{boundofnorm}
d_{BL}(\mu,\nu)\leq W_2(\mu,\nu),\quad d_{BL}(\mu,\nu)\leq \mathrm{TV}(\mu,\nu)\,.
\end{equation}

We have the following theorem characterizing the difference between $\varrho$ and $\rho$, \textit{i.e.}, EnKI and WEnKI (that is consistent to the target distribution).
\begin{theorem}\label{EKibound}
Under Assumption \ref{Gassum1}, there exists a constant $C$ depending on $\Lambda$, $|u_0|$, $\|\Gamma^{-1}_0\|_2$, $\|\Gamma^{-1}\|_2$, $|y|$, such that
\begin{equation}
d_{BL}(\varrho(u,t) \rd u,\rho(u,t) \rd u)\leq C\int^1_0\int (1+|u|^2)|\mathcal{W}|\rho\rd u\rd s\,
\end{equation}
for all $0\leq t\leq 1$.
\end{theorem}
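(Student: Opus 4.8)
The plan is to compare the two Fokker-Planck evolutions by testing both against a fixed $f \in \mathrm{Lip}(\mathbb{R}^L)$ and tracking the difference in time. Write $g(t) := \int f (\varrho(u,t) - \rho(u,t)) \rd u$. We know from Theorem~\ref{thm:consistent_wenki} that $\rho$ solves the weighted PDE~\eqref{eqn:weki_PDE}, i.e.\ $\partial_t \rho + \mathcal{L}^\rho[\rho] = \mathcal{W}(u,t)\rho$ where $\mathcal{L}^\rho$ denotes the linear transport-diffusion operator~\eqref{linearop} with covariances computed from $\rho(t)$; whereas $\varrho$ solves~\eqref{eqn:middleeki_PDEthm1}, i.e.\ $\partial_t \varrho + \mathcal{L}^\varrho[\varrho] = 0$ with covariances computed from $\varrho(t)$. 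Differentiating $g$ and using the weak (tested-on-$f$) forms, I would split
\[
\frac{\rd}{\rd t} g(t) = \underbrace{\int f\bigl(\mathcal{L}^\varrho[\varrho] - \mathcal{L}^\rho[\varrho]\bigr)\rd u}_{\text{(I): covariance mismatch}} \;-\; \underbrace{\int f\bigl(\mathcal{L}^\rho[\varrho - \rho]\bigr)\rd u}_{\text{(II): transport of the difference}} \;-\; \underbrace{\int f\,\mathcal{W}(u,t)\rho\,\rd u}_{\text{(III): source term}} .
\]
Term (III) is precisely the quantity appearing on the right-hand side of the claimed bound (after integrating in time and bounding $|f|\le 1$), so the whole game is to absorb (I) and (II) back into $g$ itself (a Gr\"onwall-type closure) plus a controlled multiple of the same $\mathcal{W}$-integral.

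For term (II): after integration by parts, $\int f \,\mathcal{L}^\rho[\varrho-\rho]\rd u$ becomes a pairing of $\varrho - \rho$ against $\nabla f$, $\mathcal{H}_u f$ contracted with the $\rho$-covariances and the drift $(y-\mathcal{G}(u))^\top\Gamma^{-1}\mathrm{Cov}^{\rho}_{pu}$. Because $f \in \mathrm{Lip}$ only controls $f$ and $\nabla f$ but not $\mathcal{H}_u f$, I would first move all derivatives onto the test function via integration by parts so that only $\nabla f$ (and $f$) appear, turning the second-order term into a first-order transport term $\nabla\cdot(\mathrm{Cov}^\rho_{up}\Gamma^{-1}\mathrm{Cov}^\rho_{pu}\cdot)$ acting on $\varrho-\rho$; the resulting coefficient, by Corollary~\ref{col:bound_Cov}, is bounded by $C(1+|u|)$-type weights. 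This is where the weight $(1+|u|^2)$ in the statement enters: to control $\int |u|^k |\varrho - \rho|$ I need second-moment bounds on both $\varrho$ and $\rho$. Lemma~\ref{lem:2} gives them for $\rho$; for $\varrho$ I would invoke the analogous a priori moment bound for the EnKI mean-field PDE, which under Assumption~\ref{Gassum1} follows from essentially the same computation (or is cited from~\cite{ding2019ensemble}). Thus (II) is estimated by $C\,d_{BL}$-like quantities of $\varrho-\rho$ against weighted test functions; upgrading $d_{BL}$ to include the weight $(1+|u|^2)$ requires a standard truncation argument, controlling the tail by the uniform second moments.

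For term (I): this is the difference of the \emph{same} operator form but with covariances evaluated at $\varrho(t)$ versus $\rho(t)$, acting on $\varrho$. Expanding, it is a sum of terms each linear in $\mathrm{Cov}^{\varrho}_{\bullet\bullet} - \mathrm{Cov}^{\rho}_{\bullet\bullet}$ (and in $\overline{\mathcal{G}}^\varrho - \overline{\mathcal{G}}^\rho$), paired against bounded (by Corollary~\ref{col:bound_Cov} and Assumption~\ref{Gassum1}) quantities and $\nabla f$. The key estimate is that each covariance difference is itself controlled by a $d_{BL}$-type distance between $\varrho(t)$ and $\rho(t)$ against test functions of the form $u\mapsto u$, $u\mapsto\mathcal{G}(u)$, $u\mapsto u\otimes u$, etc.; since $\mathcal{G}$ is Lipschitz and has bounded derivatives (Assumption~\ref{Gassum1}), and since the quadratic test functions are handled by the uniform second-moment bounds via truncation, one gets $\|\mathrm{Cov}^\varrho_{\bullet\bullet} - \mathrm{Cov}^\rho_{\bullet\bullet}\|_2 \le C\,\widetilde{d}(t)$ where $\widetilde{d}(t)$ is a weighted bounded-Lipschitz distance. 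The upshot is $|{\rm (I)}| + |{\rm (II)}| \le C\,\widetilde{d}(\varrho(t),\rho(t))$, and by an elementary argument the weighted distance $\widetilde{d}$ is itself controlled by $g$-type quantities up to the truncation error. Assembling: $\frac{\rd}{\rd t}(\text{weighted distance}) \le C\,(\text{weighted distance}) + C\int(1+|u|^2)|\mathcal{W}|\rho\,\rd u$, with zero initial data since $\varrho(\cdot,0) = \rho(\cdot,0) = \rho_{\pri}$; Gr\"onwall then yields the stated bound, using $d_{BL}\le\widetilde d$ from~\eqref{boundofnorm}-type comparisons.

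\textbf{Main obstacle.} The delicate point is the self-referential closure: the transport operator $\mathcal{L}^\rho$ and the covariance differences are controlled by \emph{weighted} bounded-Lipschitz distances (involving $|u|$, $|u|^2$), not by $d_{BL}$ itself, so one cannot directly Gr\"onwall in $d_{BL}$. Rather, one must run the estimate in a norm adapted to the $(1+|u|^2)$ weight (a weighted $d_{BL}$, or equivalently track $\int(1+|u|^2)|\varrho-\rho|$ alongside $d_{BL}$ as a coupled system, much as Proposition~\ref{thm:newweightbound} does for the variance), and then at the very end compare back to $d_{BL}$. Making the truncation/tail arguments rigorous — ensuring that approximating $|u|^2$ by bounded Lipschitz functions produces only errors controlled by the uniform second moments of Lemma~\ref{lem:2} and its EnKI analogue — is the part that needs genuine care; everything else is bookkeeping with the bounds already assembled in Corollary~\ref{col:bound_Cov} and~\eqref{boundforwu}.
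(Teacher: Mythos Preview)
Your direct PDE-comparison strategy has a genuine gap at the diffusion term. When you write ``move all derivatives onto the test function via integration by parts so that only $\nabla f$ (and $f$) appear, turning the second-order term into a first-order transport term,'' this cannot be done: since $\mathrm{Cov}^{\rho(t)}_{up}\Gamma^{-1}\mathrm{Cov}^{\rho(t)}_{pu}$ is constant in $u$, the weak form of $\frac{1}{2}\mathrm{Tr}(A\mathcal{H}_u(\varrho-\rho))$ tested against $f$ is exactly $\frac{1}{2}\int\mathrm{Tr}(A\mathcal{H}_u f)(\varrho-\rho)\rd u$, and there is no way to shed the second derivative of $f$. For $f\in\mathrm{Lip}(\mathbb{R}^L)$ this term is simply undefined, so your Gr\"onwall loop in $d_{BL}$ (or any weighted version built from Lipschitz test functions) does not close. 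The same obstruction contaminates term (I): the covariance mismatch in the diffusion coefficient also produces a Hessian of $f$.

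The paper avoids this entirely by working probabilistically rather than at the PDE level. It introduces an intermediate density $\widetilde\rho$ solving the \emph{unweighted} Fokker--Planck equation but with the covariances frozen at those of $\rho(t)$, and then bounds $d_{BL}(\rho,\varrho)$ by the triangle inequality through $\widetilde\rho$. For $d_{BL}(\rho,\widetilde\rho)$: a single SDE with a multiplicative weight $w_t$ represents both measures (the law of $u_t$ is $\widetilde\rho$, the $w_t$-weighted law is $\rho$), and one simply bounds $\mathrm{TV}\le\EE|w_t-1|\le\int_0^1\int|\mathcal{W}|\rho\rd u\rd s$. For $d_{BL}(\widetilde\rho,\varrho)$: two SDEs are coupled \emph{synchronously} (same Brownian motion, same initial data), so the diffusion terms subtract to $(\mathrm{Cov}^{\rho}_{up}-\mathrm{Cov}^{\varrho}_{up})\Gamma^{-1/2}\rd W_t$, whose It\^o contribution to $\EE|u_t-v_t|^2$ is quadratic in the covariance mismatch --- no second derivatives of any test function appear. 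A separate moment computation (again via the weighted particle) gives $\|\mathrm{Cov}^{\rho}_{up}-\mathrm{Cov}^{\widetilde\rho}_{up}\|_2\le C\int_0^1\int(1+|u|^2)|\mathcal{W}|\rho\rd u\rd s$, and Gr\"onwall in $\EE|u_t-v_t|^2$ closes. The $(1+|u|^2)$ weight thus enters not through a weighted metric on the difference $\varrho-\rho$, as you propose, but through the second-moment estimate needed to compare covariances of $\rho$ and $\widetilde\rho$. If you want to salvage a direct argument, you would need to replace $d_{BL}$ by a metric whose test functions are $C^2_b$ (or run the estimate in $W_2$ via optimal-transport duality), but at that point you are essentially reinventing the coupling.
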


This theorem states that the size of the weight gives control over the distance between $\rho$ and $\varrho$. To compare them, we introduce an intermediate surrogate $\widetilde{\rho}$, given by 
\begin{equation}\label{eqn:middleeki_PDE}
\left\{
\begin{aligned}
&\partial_t\widetilde{\rho}(u,t)+\nabla_u\cdot\left(\left(y-\mathcal{G}(u)\right)^\top\Gamma^{-1}\mathrm{Cov}^{\rho(t)}_{pu}\widetilde{\rho}\right)=\frac{1}{2}\mathrm{Tr}\left(\mathrm{Cov}^{\rho(t)}_{up}\Gamma^{-1}\mathrm{Cov}^{\rho(t)}_{pu}\mathcal{H}_u\widetilde{\rho}\right)\\
&\widetilde{\rho}(u,0)=\rho_{\pri}
\end{aligned}
\right.\ ,
\end{equation}
where $\mathrm{Cov}^{\rho(t)}_{up}$ and $\mathrm{Cov}^{\rho(t)}_{pu}$ are given by $\rho(u,t)$. We will bound $d_{BL}(\rho, \widetilde{\rho})$ and $d_{BL}(\widetilde{\rho}, \varrho)$ in the following two propositions. The theorem is a direct consequence of the two. 

\begin{proposition}\label{prop:stability}
Under Assumption \ref{Gassum1},  there exists a constant $C$ depending on $\Lambda$, $|u_0|$, $\|\Gamma^{-1}_0\|_2$, $\|\Gamma^{-1}\|_2$ and $|y|$, such that 
\begin{equation}\label{thm:stability}
d_{BL}(\widetilde{\rho}(u,t) \rd u,\rho(u,t) \rd u)\leq \mathrm{TV}(\widetilde{\rho}(u,t) \rd u,\rho(u,t) \rd u)\leq C\int^1_0\int |\mathcal{W}|\rho\rd u\rd s\,
\end{equation}
for all $0\leq t\leq 1$.
\end{proposition}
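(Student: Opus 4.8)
The plan is to observe that $\rho$ and $\widetilde{\rho}$ solve \emph{the very same} linear Fokker--Planck equation, differing only by the source term $\mathcal{W}\rho$, and then to estimate the total variation of the difference by duality against the backward Kolmogorov equation. By Theorem~\ref{thm:consistent_wenki}, $\rho$ defined in \eqref{eqn:rho_t} is a strong solution of \eqref{eqn:weki_PDE}; substituting $\varrho=\rho$ so that every covariance in $\mathcal{L}$ and in $\mathcal{R}_1,\mathcal{R}_2,\mathcal{R}_3$ becomes that of $\rho(t)$ (and is therefore a function of $t$ alone), this reads
\[
\partial_t\rho+\nabla_u\cdot\bigl(b(u,t)\rho\bigr)-\tfrac12\mathrm{Tr}\bigl(a(t)\mathcal{H}_u\rho\bigr)=\mathcal{W}(u,t)\rho ,
\]
with drift $b(u,t)=\mathrm{Cov}^{\rho(t)}_{up}\Gamma^{-1}\bigl(y-\MCG(u)\bigr)$ and diffusion matrix $a(t)=\mathrm{Cov}^{\rho(t)}_{up}\Gamma^{-1}\mathrm{Cov}^{\rho(t)}_{pu}\succeq0$. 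The surrogate $\widetilde{\rho}$ in \eqref{eqn:middleeki_PDE} solves exactly this equation with the right-hand side set to $0$ and with the \emph{same} coefficients $b,a$ (this is precisely why the covariances there were frozen at those of $\rho$). Hence $\sigma:=\rho-\widetilde{\rho}$ solves the linear inhomogeneous equation
\[
\partial_t\sigma+\nabla_u\cdot(b\sigma)-\tfrac12\mathrm{Tr}(a\,\mathcal{H}_u\sigma)=\mathcal{W}\rho ,\qquad \sigma(\cdot,0)=0 .
\]

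Next I would estimate $\mathrm{TV}(\rho(t),\widetilde{\rho}(t))=\|\sigma(t)\|_{L^1}$. Fix $t\in[0,1]$ and a test function $\psi$ with $\|\psi\|_{L^\infty}\leq1$, and let $\phi(u,s)$, $s\in[0,t]$, solve the backward equation $\partial_s\phi+b\cdot\nabla_u\phi+\tfrac12\mathrm{Tr}(a\,\mathcal{H}_u\phi)=0$ with terminal datum $\phi(\cdot,t)=\psi$. Equivalently $\phi(u,s)=\EE[\psi(X_t)\mid X_s=u]$ for the diffusion with generator $b\cdot\nabla+\tfrac12\mathrm{Tr}(a\,\mathcal{H})$, a Markov (mass- and positivity-preserving) process, so $\|\phi(\cdot,s)\|_{L^\infty}\leq1$ for all $s\leq t$ by the maximum principle. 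Pairing $\phi$ against $\sigma$, the $\mathcal{L}$-terms cancel by construction (integrating the transport term by parts once and the diffusion term twice, using that $a$ is $u$-independent), leaving $\frac{\rd}{\rd s}\int\phi(u,s)\sigma(u,s)\rd u=\int\phi(u,s)\mathcal{W}(u,s)\rho(u,s)\rd u$; integrating from $0$ to $t$ and using $\sigma(\cdot,0)=0$,
\[
\int\psi(u)\,\sigma(u,t)\rd u=\int_0^t\!\!\int\phi(u,s)\,\mathcal{W}(u,s)\rho(u,s)\rd u\rd s\leq\int_0^t\!\!\int|\mathcal{W}(u,s)|\rho(u,s)\rd u\rd s .
\]
Taking the supremum over $\psi$ with $\|\psi\|_{L^\infty}\leq1$ gives $\mathrm{TV}(\rho(t),\widetilde{\rho}(t))\leq\int_0^1\!\int|\mathcal{W}|\rho\rd u\rd s$, and $d_{BL}\leq\mathrm{TV}$ is \eqref{boundofnorm}; this yields \eqref{thm:stability} with absolute constant $1$.

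To make the duality step rigorous I would invoke the a priori bounds already established: Corollary~\ref{col:bound_Cov} bounds $\mathrm{Cov}^{\rho(t)}_{up},\mathrm{Cov}^{\rho(t)}_{pp}$ uniformly on $[0,1]$, so $a(t)$ is bounded and $b(u,t)$ grows at most linearly in $u$ (since $\MCG$ is Lipschitz under Assumption~\ref{Gassum1}); together with the Gaussian-type tail of $\rho$ inherited from \eqref{eqn:rho_t} and the prior, this gives well-posedness of the forward equation for $\sigma$ and the backward equation for $\phi$ in suitable weighted spaces and enough decay in $u$ to justify the integrations by parts with no boundary contributions. If one prefers to avoid quoting a black-box solvability result, one may add a vanishing viscosity $\epsilon\Delta$ to both PDEs, run the (now non-degenerate, classical) duality argument with a bound uniform in $\epsilon$, and let $\epsilon\to0$; the constant $C$ in the statement, depending on $\Lambda,|u_0|,\|\Gamma_0^{-1}\|_2,\|\Gamma^{-1}\|_2,|y|$, is exactly what is needed to control the coefficients and tails through this regularization-and-limit step. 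The main obstacle is precisely that $a(t)=\mathrm{Cov}_{up}\Gamma^{-1}\mathrm{Cov}_{pu}$ is only positive \emph{semi}definite (rank at most $K$), so the backward Kolmogorov equation is degenerate parabolic and both its solvability and the maximum principle require care — handled either through the probabilistic (Markov-process) representation or through the vanishing-viscosity regularization above, leaning on the covariance and moment bounds of Lemma~\ref{lem:2} and Corollary~\ref{col:bound_Cov}.
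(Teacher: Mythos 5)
Your proof is correct, but it runs the argument in the dual (PDE) direction rather than the paper's probabilistic direction. The paper introduces a single weighted particle $(u_t,w_t)$ solving $\rd u_t=\mathrm{Cov}_{up}^{\rho(t)}\Gamma^{-1}(y-\MCG(u_t))\rd t+\mathrm{Cov}_{up}^{\rho(t)}\Gamma^{-1/2}\rd W_t$ and $\rd w_t=\mathcal{W}(u_t,t)w_t\rd t$ with $w_0=1$, uses the representations $\EE f(u_t)=\EE_{\widetilde\rho(t)}f$ and $\EE(w_tf(u_t))=\EE_{\rho(t)}f$, and then gets the elementary bound $\frac{\rd}{\rd t}\EE|w_t-1|\leq\EE(|\mathcal{W}(u_t,t)|w_t)=\int|\mathcal{W}|\rho\rd u$, so that testing against $\|f\|_\infty\leq1$ gives $\mathrm{TV}(\widetilde\rho,\rho)\leq\EE|w_t-1|\leq\int_0^1\int|\mathcal{W}|\rho\rd u\rd s$ --- the same constant-$1$ bound you obtain. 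Your backward-Kolmogorov function $\phi(u,s)=\EE[\psi(X_t)\mid X_s=u]$ is exactly the transition semigroup of that same diffusion, and your Duhamel identity $\int\psi\sigma(t)=\int_0^t\int\phi\,\mathcal{W}\rho$ is the dual form of the paper's $\EE((w_t-1)f(u_t))$ computation; the Feynman--Kac weight $w_t=\exp(\int_0^t\mathcal{W}(u_s,s)\rd s)$ is what replaces your $\phi$. What the paper's route buys is that it never has to face the issue you correctly flag: $a(t)=\mathrm{Cov}_{up}^{\rho(t)}\Gamma^{-1}\mathrm{Cov}_{pu}^{\rho(t)}$ is only positive semidefinite, so the backward equation is degenerate parabolic and both its solvability and the $L^\infty$ maximum principle need justification; in the probabilistic formulation the bound $\|\phi\|_\infty\leq1$ is automatic from the Markov property, and the only analytic input is well-posedness of the (Lipschitz-coefficient) SDE plus uniqueness of the linear Fokker--Planck equations so that the particle representations identify $\widetilde\rho$ and $\rho$ --- a requirement your proof shares. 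Your vanishing-viscosity alternative is a legitimate purely analytic substitute, at the cost of an extra regularization-and-limit step that the paper avoids.
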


\begin{proposition}\label{prop:stability2}
Under Assumption~\ref{Gassum1}, there exists a constant $C$ depending on $\Lambda$, $|u_0|$, $\|\Gamma^{-1}_0\|_2$, $\|\Gamma^{-1}\|_2$ and $|y|$, such that 
\begin{equation}\label{thm:wass:stability}
d_{BL}(\varrho(u,t) \rd u,\widetilde{\rho}(u,t) \rd u)\leq W_2(\varrho(u,t)\rd u,\widetilde{\rho}(u,t) \rd u)\leq C\int^1_0\int (1+|u|^2)|\mathcal{W}|\rho \rd u\rd s\,
\end{equation}
for all $0\leq t\leq 1$.
\end{proposition}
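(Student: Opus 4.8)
The plan is to reduce everything to a synchronous coupling of the two McKean--Vlasov SDEs behind \eqref{eqn:middleeki_PDE} and \eqref{eqn:middleeki_PDEthm1}, exactly as in the propagation-of-chaos arguments underlying EnKI. Since $d_{BL}\le W_2$ is already recorded in \eqref{boundofnorm}, it suffices to bound $W_2(\varrho(\cdot,t),\widetilde\rho(\cdot,t))$. The first ingredient is a \emph{weighted} version of Proposition~\ref{prop:stability}. Recall that $\rho$ in \eqref{eqn:rho_t} solves the weighted PDE~\eqref{eqn:weki_PDE} (Theorem~\ref{thm:consistent_wenki}) and $\widetilde\rho$ solves \eqref{eqn:middleeki_PDE}; crucially, once the covariances in \eqref{eqn:weki_PDE} are evaluated along the solution $\rho$, the two PDEs carry \emph{the same} linear Fokker--Planck operator, call it $\mathcal{L}_\rho$, with drift $(y-\MCG)^\top\Gamma^{-1}\Cov^{\rho(t)}_{pu}$ and (nonnegative) diffusion $\Cov^{\rho(t)}_{up}\Gamma^{-1}\Cov^{\rho(t)}_{pu}$. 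Hence $\delta:=\rho-\widetilde\rho$ solves $\partial_t\delta+\mathcal{L}_\rho[\delta]=\mathcal{W}\rho$ with $\delta(\cdot,0)=0$, and $\int\mathcal{W}\rho\rd u=0$ because $\rho(t)$ is a probability density for all $t$. Writing Duhamel's formula $\delta(t)=\int_0^t S(t,s)\bigl[\mathcal{W}(\cdot,s)\rho(\cdot,s)\bigr]\rd s$ with $S(t,s)$ the propagator of $\partial_t+\mathcal{L}_\rho=0$, and using that its drift has at most linear growth and its diffusion is bounded (Corollary~\ref{col:bound_Cov} together with $\|\nabla\MCG\|_2\le\Lambda$), a Grönwall estimate for the weighted mass $\mu\mapsto\int(1+|u|^2)\rd|\mu|$ along the characteristics of $\mathcal{L}_\rho$ shows $S(t,s)$ does not inflate it by more than $e^{C(t-s)}$. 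Therefore
\[
\int(1+|u|^2)\,\bigl|\rho(u,t)-\widetilde\rho(u,t)\bigr|\rd u\ \le\ C\int_0^1\!\!\int(1+|u|^2)|\mathcal{W}|\rho\rd u\rd s\ =:\ C\varepsilon,\qquad 0\le t\le1 .
\]
This is the argument of Proposition~\ref{prop:stability} run with the weight $1+|u|^2$, and it is the source of the $1+|u|^2$ factor in the claimed bound.

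The second ingredient is a comparison of the covariance matrices. Using $|\MCG(u)|\le C(1+|u|)$ (Assumption~\ref{Gassum1}) and the uniform second-moment bounds on $[0,1]$ for $\rho$ (Lemma~\ref{lem:2}), for $\widetilde\rho$ (whose SDE has bounded coefficients), and for $\varrho$ (the a~priori moment bounds of the EnKI mean-field flow, \cite{ding2019ensemble,DCPS}), a short calculation from $\Cov^\mu_{up}=\EE_\mu[u\otimes\MCG(u)]-\EE_\mu[u]\otimes\EE_\mu[\MCG(u)]$ gives, first,
\[
\|\Cov^{\rho(t)}_{up}-\Cov^{\widetilde\rho(t)}_{up}\|_2\ \le\ C\int(1+|u|^2)\bigl|\rho(u,t)-\widetilde\rho(u,t)\bigr|\rd u\ \le\ C\varepsilon ,
\]
and second, estimating $\Cov^{\widetilde\rho}_{up}-\Cov^{\varrho}_{up}$ through an optimal $W_2$-coupling and using that $u\mapsto u$ and $u\mapsto\MCG(u)$ are Lipschitz with linear growth,
\[
\|\Cov^{\widetilde\rho(t)}_{up}-\Cov^{\varrho(t)}_{up}\|_2\ \le\ C\,W_2(\widetilde\rho(t),\varrho(t)) .
\]
Combining, $\|\Cov^{\rho(t)}_{up}-\Cov^{\varrho(t)}_{up}\|_2\le C\varepsilon+C\,W_2(\widetilde\rho(t),\varrho(t))$, and the same for $\Cov^{\cdot}_{pu}$.

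The third ingredient is the coupling itself. Fix a Brownian motion $W_t$ and an initial draw $\widetilde u_0=\bar u_0\sim\rho_\mathrm{prior}$, and let $\widetilde u_t,\bar u_t$ solve the SDEs associated with \eqref{eqn:middleeki_PDE} and \eqref{eqn:middleeki_PDEthm1} driven by the same $W_t$, so that $\mathrm{Law}(\widetilde u_t)=\widetilde\rho(t)$, $\mathrm{Law}(\bar u_t)=\varrho(t)$ and $g(t):=\EE|\widetilde u_t-\bar u_t|^2\ge W_2^2(\widetilde\rho(t),\varrho(t))$. Applying It\^o's formula to $|\widetilde u_t-\bar u_t|^2$, splitting the drift difference into the part proportional to $\MCG(\bar u_t)-\MCG(\widetilde u_t)$ (Lipschitz in $|\widetilde u_t-\bar u_t|$ since $\|\Cov^{\rho}_{up}\|_2\le C$) and the part carrying the covariance mismatch, and treating the diffusion difference likewise, one obtains
\[
g'(t)\ \le\ C\,g(t)+C\bigl(1+\EE^{\varrho(t)}|u|^2\bigr)\,\|\Cov^{\rho(t)}_{up}-\Cov^{\varrho(t)}_{up}\|_2^2 .
\]
Inserting the second ingredient and $\EE^{\varrho(t)}|u|^2\le C$ (uniformly on $[0,1]$), the $W_2(\widetilde\rho,\varrho)^2=g$ contribution of the mismatch is absorbed into $C g$, leaving $g'(t)\le C g(t)+C\varepsilon^2$; since $g(0)=0$, Grönwall gives $g(t)\le C\varepsilon^2$, hence $W_2(\varrho(t),\widetilde\rho(t))\le C\varepsilon$ for all $t\in[0,1]$, which is \eqref{thm:wass:stability} after recalling $d_{BL}\le W_2$.

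The main obstacle is the first ingredient: showing that the Fokker--Planck propagator $S(t,s)$ is bounded, with a Grönwall-type constant, on the $(1+|u|^2)$-weighted total-variation space, notwithstanding that its diffusion $\Cov^{\rho}_{up}\Gamma^{-1}\Cov^{\rho}_{pu}$ may be degenerate. This is handled by representing $S(t,s)$ through the (well-posed, bounded-coefficient, linear-growth-drift) SDE generated by $\mathcal{L}_\rho$, computing $\tfrac{\rd}{\rd t}\EE(1+|X_t|^2)$, and splitting the zero-mass source $\mathcal{W}\rho$ into its positive and negative parts and transporting each; the moment bounds of Corollary~\ref{col:bound_Cov} and Lemma~\ref{lem:2} keep all constants dependent only on $\Lambda,|u_0|,\|\Gamma_0^{-1}\|_2,\|\Gamma^{-1}\|_2,|y|$. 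A secondary technical point, which we simply import from the EnKI mean-field well-posedness theory, is the uniform-in-$[0,1]$ second moment bound for $\varrho$ used in the last step.
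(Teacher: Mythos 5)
Your proposal is correct and follows essentially the same route as the paper: a weighted control of $\rho-\widetilde\rho$ (your Duhamel/propagator bound for $\int(1+|u|^2)|\rho-\widetilde\rho|\rd u$ is exactly the dual formulation of the paper's estimate of $\EE\bigl[(1+|u_t|^2)|w_t-1|\bigr]$ for the weight process attached to the SDE with $\rho$-covariances), followed by the covariance comparison and the synchronous coupling of the $\widetilde\rho$- and $\varrho$-dynamics with It\^o plus Gr\"onwall. Your endgame is in fact slightly cleaner: absorbing the mismatch via Young's inequality to get $g'\leq Cg+C\varepsilon^2$ yields $W_2\leq C\varepsilon$ directly, whereas the paper's displayed conclusion $\EE|\gamma_t|^2\leq C\sup_t\|\Cov_{up}^{\rho(t)}-\Cov_{up}^{\widetilde\rho(t)}\|_2$ as written would only give the square root of that quantity.
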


\begin{proof}[Proof of Proposition~\ref{prop:stability}] 
The proof is based on the following construction of particle system. Let
\begin{equation}\label{eqn:weki_SDE2}
\left\{
\begin{aligned}
&\rd u_t=\mathrm{Cov}_{up}^{\rho(t)}\Gamma^{-1}\left(y-\MCG(u_t)\right)\rd t+\mathrm{Cov}_{up}^{\rho(t)}\Gamma^{-1/2}\rd W_t\,\\
&\rd w_t=\mathcal{W}(u,t)w_t\rd t
\end{aligned}\,,
\right.
\end{equation}
with initial data $u_0$ sampled from $\mu_\pri$ and $w_0=1$. This is a Langevin dynamics, so that for any test function $f$:
\[
\EE(f(u_t))=\EE_{\widetilde{\rho}(t)}f,\quad \EE(w_tf(u_t))=\EE_{\rho(t)}f\,.
\]

It is clear from second equality in \eqref{eqn:weki_SDE2}:
\[
w_t>0
\]
for all $t$, and that
\[
\rd|w_t-1|\leq |\rd w_t-1|\leq \left|\mathcal{W}(u_t,t)\right|w_t\rd t\,.
\]
This means
\[
\frac{\rd}{\rd t} \EE|w_t-1|\leq \EE\left(\left|\mathcal{W}(u_t,t)\right|w_t\right)=\int \left|\mathcal{W}\right|\rho \rd u
\]
and
\[
\EE|w_t-1|\leq \int^1_0\int \left|\mathcal{W}\right|\rho \rd u \rd t,\quad \forall 0\leq t\leq 1.
\]
The boundedness~\eqref{thm:stability} is a direct result by using $L^\infty$ test function to bound total variation:
\begin{equation*}
\begin{aligned}
\mathrm{TV}(\widetilde{\rho}\rd u,\rho \rd u)&\leq\left|\sup_{\|f\|_\infty=1}\int f(\widetilde{\rho}-\rho)\rd u\right|\leq \sup_{\|f\|_\infty=1}\left|\EE (w_t-1)f(u_t)\right|\\
&\leq \sup_{\|f\|_\infty=1}\|f\|_\infty \EE|w_t-1|\\
&\leq \int^1_0\int \left|\mathcal{W}\right|\rho \rd u \rd s\,.
\end{aligned}
\end{equation*}
\end{proof}

\begin{proof}[Proof of Proposition~\ref{prop:stability2}]

We first state and prove an estimate of the difference of covariance
\begin{equation}\label{stability2}
\|\Cov^{\rho(t)}_{u,p}-\Cov^{\widetilde{\rho}(t)}_{u,p}\|_2\leq C\int^1_0\int (1+|u|^2)|\mathcal{W}|\rho \rd u\rd s
\end{equation}
for all $0 \leq t \leq 1$. For this, we first bound $\EE |u_t|^2 |w_t-1|$ using It\^o's formula and \eqref{eqn:weki_SDE2}:
\[
\rd |u_t|^2 |w_t-1|=2\left\langle \rd u_t,u_t\right\rangle\left|w_t-1\right|+\left\langle \rd u_t,\rd u_t\right\rangle\left|w_t-1\right|+|u_t|^2\rd |w_t-1|\,.
\]
Taking expectation on both sides, we have
\[
\begin{aligned}
\rd \EE|u_t|^2 |w_t-1|\leq &\EE\left\langle\mathrm{Cov}_{up}^{\rho(t)}\Gamma^{-1}\left(y-\MCG(u_t)\right),u_t\right\rangle|w_t-1|\rd t+C\EE|w_t-1|\rd t\\
&+\EE\left[|u_t|^2\left|\mathcal{W}(u_t,t)\right|w_t\right]\rd t\\
\leq &C\EE \left[(|u_t|^2+|u_t|)|w_t-1|\right]\rd t+C\int^1_0\int (1+|u|^2)\left|\mathcal{W}\right|\rho \rd u \rd s\\
\leq &C\EE \left[|u_t|^2|w_t-1|\right]+C\left(\EE |w_t-1|\right)^{1/2}(\EE |u_t|^2|w_t-1|)^{1/2}\\
&+C\int^1_0\int (1+|u|^2)\left|\mathcal{W}\right|\rho \rd u \rd s\\
\leq &C\EE |u_t|^2|w_t-1|\\
&+C\left(\int^1_0\int (1+|u|^2)\left|\mathcal{W}\right|\rho \rd u \rd t\right)^{1/2}(\EE |u_t|^2|w_t-1|)^{1/2}\\
&+C\int^1_0\int (1+|u|^2)\left|\mathcal{W}\right|\rho \rd u \rd s\,,
\end{aligned}
\]
where we use Corollary \ref{col:bound_Cov} and equation~\eqref{lossb2}.

By Gr\"onwall's inequality and $w_0=1$, we get
\begin{equation}\label{2momentstability}
\left\|\int u\otimes u (\widetilde{\rho}-\rho)\rd u\right\|_2\leq\EE\left[|u_t|^2 |w_t-1|\right]\leq C\int^1_0\int (1+|u|^2)\left|\mathcal{W}\right|\rho \rd u \rd s
\end{equation}
and
\begin{equation}\label{1momentstability}
\begin{aligned}
\left|\int |u|(\widetilde{\rho}-\rho)\rd u\right|&\leq\EE|u_t| |w_t-1|\leq \left(\EE |w_t-1|\right)^{1/2}(\EE |u_t|^2|w_t-1|)^{1/2}\\
&\leq C\int^1_0\int (1+|u|^2)\left|\mathcal{W}\right|\rho \rd u \rd s
\end{aligned}
\end{equation}
for any $t\leq 1$. Combining \eqref{2momentstability} and \eqref{1momentstability}, we have
\[
\begin{aligned}
\|\Cov^{\rho(t)}_{u,p}-\Cov^{\widetilde{\rho}(t)}_{u,p}\|_2&\leq \left\|\int u\otimes u (\widetilde{\rho}-\rho)\rd u\right\|_2+\left|\int |u|(\widetilde{\rho}-\rho)\rd u\right|\left|\int |u|(\widetilde{\rho}+\rho)\rd u\right|\\
&\leq C\int^1_0\int (1+|u|^2)\left|\mathcal{W}\right|\rho \rd u \rd s\,,
\end{aligned}
\]
which proves \eqref{stability2}.

\medskip 

We now come back to the Proposition to prove \eqref{thm:wass:stability}. We use two particle systems to represent \eqref{eqn:middleeki_PDE} and \eqref{eqn:middleeki_PDEthm1}. Let
\begin{equation}\label{eqn:stabilityweki_SDE}
\rd u_t=\mathrm{Cov}^{\rho(t)}_{up}\Gamma^{-1}\left(y-\MCG(u_t)\right)\rd t+\mathrm{Cov}_{up}^{\rho(t)}\Gamma^{-1/2}\rd W_t\,,
\end{equation}
where the initial data $u_0$ is sampled from $\rho_\pri(u)$, and let
\begin{equation}\label{eqn:stabilityweki_SDE2}
\rd v_t=\mathrm{Cov}^{\varrho(t)}_{up}\Gamma^{-1}\left(y-\MCG(v_t)\right)\rd t+\mathrm{Cov}_{up}^{\varrho(t)}\Gamma^{-1/2}\rd W_t\,
\end{equation}
with the same initial data $v_0=u_0$. Then immediately
\[
W_2(\widetilde{\rho},\varrho)\leq \left(\EE|u_t-v_t|^2\right)^{1/2}\,.
\]
To show the theorem, it suffices to prove
\begin{equation}\label{wass:stability2}
\EE|u_t-v_t|^2\leq C\sup_{t \in [0, 1]} \|\mathrm{Cov}_{up}^{\rho(t)}-\mathrm{Cov}_{up}^{\widetilde{\rho}(t)}\|_2\,
\end{equation}
for all $t$ and then utilize~\eqref{stability2}.

Let $\gamma_t=u_t-v_t$, one subtracts \eqref{eqn:stabilityweki_SDE2} from \eqref{eqn:stabilityweki_SDE} and uses It\^o's formula to obtain
\begin{equation*}
\begin{aligned}
\frac{\rd}{\rd t}\EE|\gamma_t|^2& \leq \EE\left\langle \mathrm{Cov}^\rho_{up}\Gamma^{-1}\left(y-\MCG(u_t)\right)-\mathrm{Cov}^{\varrho}_{up}\Gamma^{-1}\left(y-\MCG(v_t)\right),\gamma_t\right\rangle \rd t\\
&\qquad +\frac{1}{2}\mathrm{Tr}\left\{\left(\mathrm{Cov}_{up}^{\rho(t)}-\mathrm{Cov}_{up}^{\varrho(t)}\right)\Gamma^{-1}\left(\mathrm{Cov}_{up}^{\rho(t)}-\mathrm{Cov}_{up}^{\varrho(t)}\right)\right\}\rd t\\
& =\EE\left\langle \left(\mathrm{Cov}^{\rho(t)}_{up}-\mathrm{Cov}^{\varrho(t)}_{up}\right)\Gamma^{-1}\left(y-\MCG(u_t)\right),\gamma_t\right\rangle\rd t\\
&\quad -\EE\left\langle\mathrm{Cov}^{\varrho(t)}_{up}\Gamma^{-1}\left(\MCG(u_t)-\MCG(v_t)\right),\gamma_t\right\rangle \rd t\\
&\quad +\frac{1}{2}\mathrm{Tr}\left\{\left(\mathrm{Cov}_{up}^{\rho(t)}-\mathrm{Cov}_{up}^{\varrho(t)}\right)\Gamma^{-1}\left(\mathrm{Cov}_{up}^{\rho(t)}-\mathrm{Cov}_{up}^{\varrho(t)}\right)\right\}\rd t\\
& \leq C\|\mathrm{Cov}_{up}^{\rho(t)}-\mathrm{Cov}_{up}^{\varrho(t)}\|_2(|y|^2+\EE|u_t|^2)^{1/2}(\EE|\gamma_t|^2)^{1/2}+C\|\mathrm{Cov}_{up}^{\varrho(t)}\|_2\EE|\gamma_t|^2\\
&\quad +C\|\mathrm{Cov}_{up}^{\rho(t)}-\mathrm{Cov}_{up}^{\varrho(t)}\|^2_2\,.
\end{aligned}
\end{equation*}
Since
\[
\|\mathrm{Cov}_{up}^{\widetilde{\rho}(t)}-\mathrm{Cov}_{up}^{\varrho(t)}\|_2\leq (\EE|\gamma_t|^2)^{1/2},
\]
we have
\[
\begin{aligned}
\|\mathrm{Cov}_{up}^{\rho(t)}-\mathrm{Cov}_{up}^{\varrho(t)}\|_2&\leq \|\mathrm{Cov}_{up}^{\rho(t)}-\mathrm{Cov}_{up}^{\widetilde{\rho}(t)}\|_2+\|\mathrm{Cov}_{up}^{\widetilde{\rho}(t)}-\mathrm{Cov}_{up}^{\varrho(t)}\|_2\\&\leq \|\mathrm{Cov}_{up}^{\rho(t)}-\mathrm{Cov}_{up}^{\widetilde{\rho}(t)}\|_2+C(\EE|\gamma_t|^2)^{1/2}\,.
\end{aligned}
\]

Therefore
\begin{equation*}
\begin{aligned}
\frac{\rd}{\rd t}\EE|\gamma_t|^2\leq C\EE|\gamma_t|^2+C\|\mathrm{Cov}_{up}^{\rho(t)}-\mathrm{Cov}_{up}^{\widetilde{\rho}(t)}\|_2(\EE|\gamma_t|^2)^{1/2}+\|\mathrm{Cov}_{up}^{\rho(t)}-\mathrm{Cov}_{up}^{\widetilde{\rho}(t)}\|^2_2\,.
\end{aligned}
\end{equation*}
Since $\gamma_0=0$, by Gr\"onwall's inequality, we finally arrive at 
\[
\EE|\gamma_t|^2\leq C\|\|\mathrm{Cov}_{up}^{\rho(t)}-\mathrm{Cov}_{up}^{\widetilde{\rho}(t)}\|_2\|_{L^\infty_{[0,1]}}
\]
for all $t<1$, which proves~\eqref{wass:stability2}, concluding the proposition.
\end{proof}

\section{Numerical results}\label{numericalresult}
In this section, we show some numerical evidence to demonstrate the superiority of the proposed method. All numerical examples are highly nonlinear, so we are away from the known ``safe zone''  where EnKI and EnSRF work both perfect. We remark that we conduct numerical experiments only in low dimensional setting to have a clear illustration of the behavior of the algorithm.

\subsection{One dimension example}
As a start, we first test out the 1D case. We set the normal distribution $\mathcal{N}(0,1)$ as the prior distribution.

\begin{enumerate}[label=$\bullet$, wide]
\item Example 1: In this example we set $\mathcal{G}(u)=4\cos(2(u-3))+\sin(u-3)$ and the data (with only one observation) is given at $y=0$. The posterior distribution is a multimodal distributions, as shown in Figure~\ref{Figure1}. The number of samples is set to be $N=1000$, and in WEnKI and WEnSRF, we choose the time step $\Delta t=10^{-3}$. As a comparison, we plot the result using WEnKF (Remark~\ref{rmk:WEnKF}) and Important Sampling, EnKI and EnSRF. In this example, the prior and the posterior distributions share supports, so IS and WEnKF, the two methods that achieve consistency, behave relatively well. But due to nonlinearity and non-Gaussianity, EnKI and EnSRF, the two methods that tend to give one-mode Gaussian-like profile, fail.
\begin{figure}[!ht]
     \centering
     \subfloat{\includegraphics[width=1\textwidth,height=0.4\textheight]{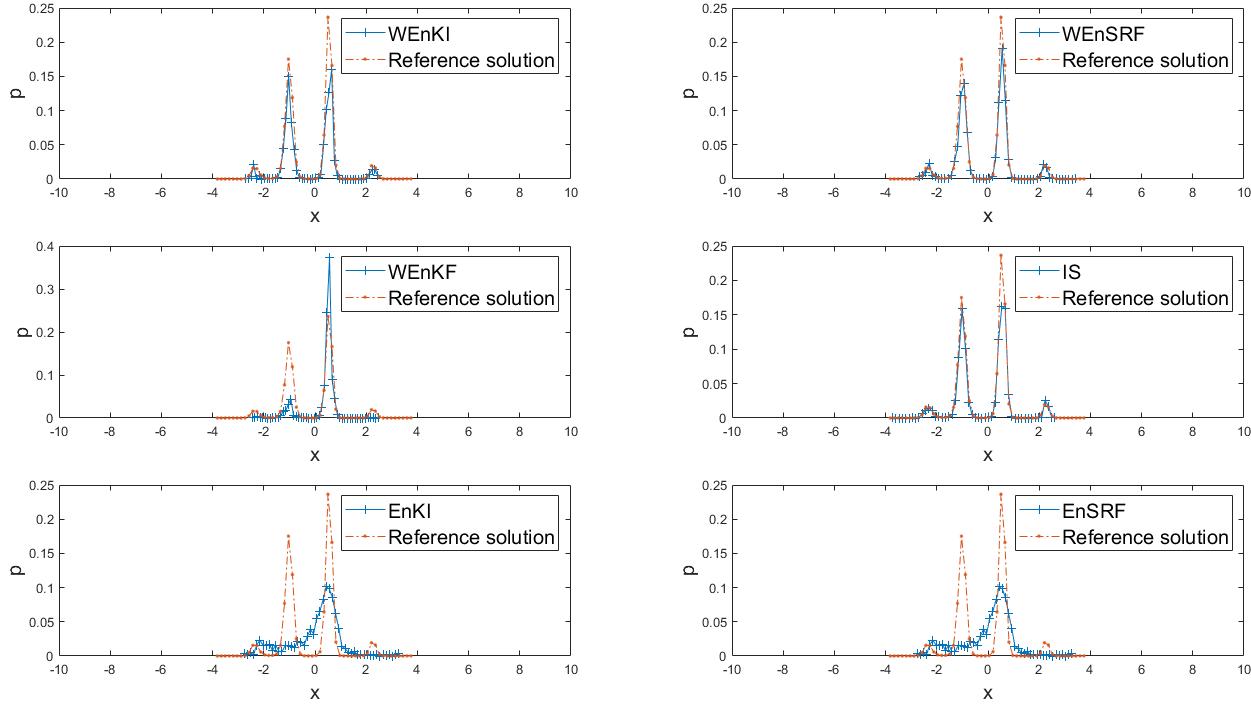}}
     \caption{Example $1$: from left top to bottom right: WEnKI; WEnSRF; WEnKF, as shown in Remark~\ref{rmk:WEnKF} and equation ~\eqref{weightupdatewenkf}; IS; EnKI and EnSRF. (All evolutional equation take $\Delta t = 10^{-3}$.) }
     \label{Figure1}
\end{figure}

\item Example 2: This is a highly nonlinear example with $4$-th power in $\mathcal{G}$: $G(u)=(u-3)^4-1$, and data is still set to be $y=0$. $N=2000$ and $\Delta t = 10^{-5}$. WEnKI and WEnSRF clearly outperform the others, see Figure~\ref{Figure3}. Note that in the experiment we find that for stability of the Euler solver~\eqref{eqn:wensrf_SDE}, and~\eqref{eqn:weki_SDE}, the time step is chosen to be rather small.
\begin{figure}[!ht]
     \centering
     \subfloat{\includegraphics[width=1\textwidth,height=0.4\textheight]{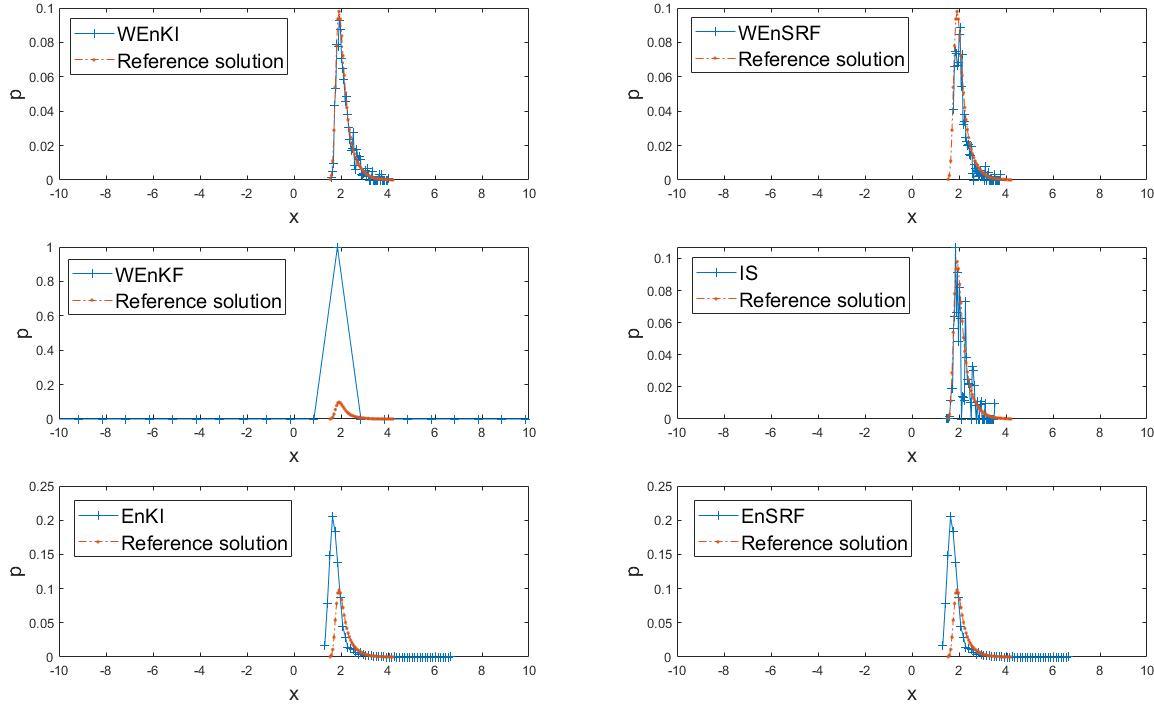}}
     \caption{Example $2$: from left top to bottom right: WEnKI; WEnSRF; WEnKF; IS; EnKI and EnSRF.}
     \label{Figure3}
\end{figure}

\item Example 3: In this example we set $\mathcal{G}(u)=(u-5)^2$ and data $y=0$. $N=2000$ and $\Delta t = 10^{-3}$. The posterior distribution has one peak, but is non-Gaussian. While the center of the prior is at $0$, the center of the likelihood function is at $u=5$: so there is a big shift of support from the prior to the posterior distribution. As seen in Figure \ref{Figure2}, both WEnKI and WEnSRF still capture the posterior distribution rather well. EnKI and EnSRF cannot capture the entire profile, but at least can move to fit relatively accurate support. WEnKF and IS completely fail.
\begin{figure}[!ht]
     \centering
     \subfloat{\includegraphics[width=1\textwidth,height=0.4\textheight]{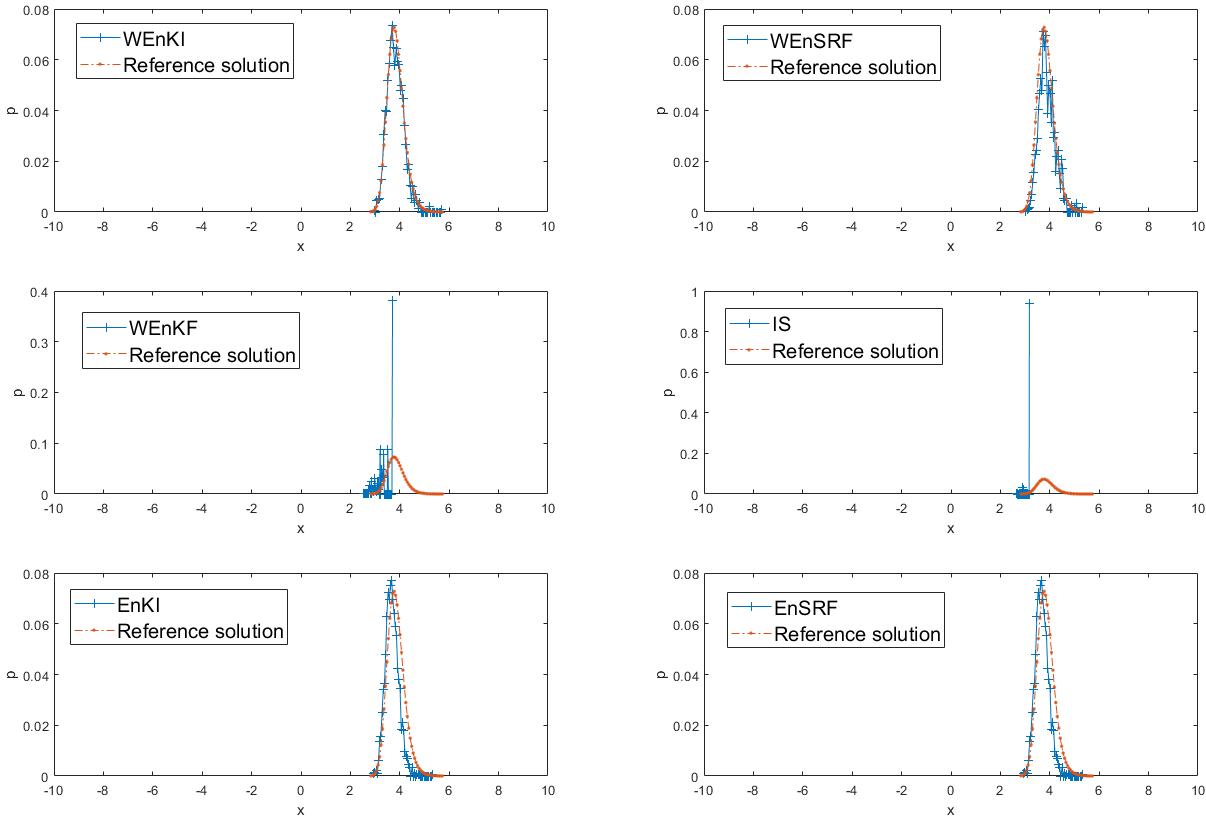}}
     \caption{Example $3$: from left top to bottom right: WEnKI; WEnSRF; WEnKF; IS; EnKI and EnSRF.}
     \label{Figure2}
\end{figure}
\end{enumerate}

To quantitatively study the behavior, we numerically compute the weights variance
\begin{equation}\label{var:weight}
\textrm{Var}(Nw(t))\approx\frac{1}{N}\sum^N_{n=1}|Nw^n(t)|^2-1
\end{equation}
of all three methods (WEnKI, WEnSRF, and IS). (For IS, the weight at $t$ is calculated  using $\rho(u,t)$ (defined in~\eqref{eqn:rho_t}).)  In Figure~\ref{Figure7}, we plot evolution of the weight variance with respect to $t$ in log scale (shifted by $1$ for positivity). It shows the variance of weights in IS quickly blows up in time, while the quantity for the other two keep reasonably bounded.
\begin{figure}[!ht]
     \centering
     \includegraphics[width=.65\textwidth]{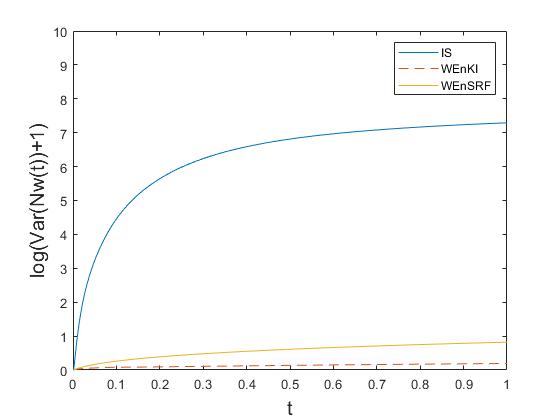}
     \caption{Example 3: $\log(\textrm{Var}(Nw(t))+1)$ for WEnKI, WEnSRF and IS.}
     \label{Figure7}
\end{figure}

As a demonstration of consistency, we compare moments computed using the four methods, and the reference solution, as shown in Table~\ref{table:example2}. It is clear that despite EnKI and EnSRF are visually close to the groundtruth solution, the errors in the moments are still rather large. This is expected: the groundtruth solution is not a Gaussian distribution, but the underlying assumption for EnKI and EnSRF to be valid is the Gaussianity.
\begin{table}[ht]
\caption{Error of moments estimation in Example 3}
\label{table:example2}
\begin{center}
\begin{tabular}{ |c|c|c|c|c|c|c|c|c|}
 \hline
&\multicolumn{2}{|c|}{WEnKI}&\multicolumn{2}{|c|}{WEnSRF}\\
\hline
Moments&Est.&Re. Error&Est.&Re. Error\\
\hline
$\EE|u|^1=3.84$&3.82&0.0056&3.88&0.0098\\
\hline
$\EE|u|^2=14.90$&14.73&0.0114&15.19&0.0192\\
\hline
$\EE|u|^3=58.22$&57.19&0.0177&59.86&0.0281\\
\hline
$\EE|u|^4=229.36$&223.79&0.0243&237.75&0.0366\\
\hline
$\EE|u|^5=911.22$&882.83&0.0312&951.95&0.0447\\
\hline
\end{tabular}
\begin{tabular}{ |c|c|c|c|c|c|c|c|c|}
 \hline
&\multicolumn{2}{|c|}{EnKI}&\multicolumn{2}{|c|}{EnSRF}\\
\hline
Moments&Est.&Re. Error&Est.&Re. Error\\
\hline
$\EE|u|^1=3.84$&3.69&0.0413&3.70&0.0391\\
\hline
$\EE|u|^2=14.90$&13.66&0.0833&13.73&0.0785 \\
\hline
$\EE|u|^3=58.22$&50.90&0.1258&51.35&0.1181 \\
\hline
$\EE|u|^4=229.36$&190.68&0.1687&193.24&0.1575 \\
\hline
$\EE|u|^5=911.22$&718.31&0.2117&732.17&0.1965 \\
\hline
\end{tabular}
\begin{tabular}{ |c|c|c|c|c|c|c|c|c|}
 \hline
&\multicolumn{2}{|c|}{WEnKF}&\multicolumn{2}{|c|}{IS}\\
\hline
Moments&Est.&Re. Error&Est.&Re. Error\\
\hline
$\EE|u|^1=3.84$&3.40&0.1156&3.52&0.0858 \\
\hline
$\EE|u|^2=14.90$&11.65&0.2181&12.37&0.1699 \\
\hline
$\EE|u|^3=58.22$&40.22&0.3093&43.57&0.2517 \\
\hline
$\EE|u|^4=229.36$&139.72&0.3908&153.56&0.3305 \\
\hline
$\EE|u|^5=911.22$&488.51&0.4639&541.71&0.4055 \\
\hline
\end{tabular}
\end{center}
\end{table}

\textcolor{black}{As noted before, the weight terms for the two methods are very complicated. The updating formula also requires the computation of the derivatives of $\mathcal{G}$. This will introduce a high cost in practice. We document the cost in Table~\ref{table:runningtime}. The weighted version of the algorithms almost double the cost. This is understandable. The number of ODEs that we need to compute is doubled: instead of computing $u_t$ only, we compute both $u_t$ and $w_t$.}
\begin{table}[ht]
\caption{Simulation time in Example 1-3}
\label{table:runningtime}
\begin{center}
\begin{tabular}{ |c|c|c|c|c|c|c|c|c|}
\hline
Case&WEnKI&WEnSRF&EnKI&EnSRF\\
\hline
Example 1 &0.362s&0.197s&0.138s&0.178s \\
\hline
Example 2&50.041s&41.739s&26.564s&18.518s \\
\hline
Example 3&0.198s&0.115s&0.120s&0.072s \\
\hline
\end{tabular}
\end{center}
\end{table}

\subsection{Two dimension example}
We also present some $2$-D examples. Normal distribution $\mathcal{N}(0,\mathrm{I}_2)$ is chosen as the prior distribution.
\begin{enumerate}[label=$\bullet$, wide]

\item Example 4: We consider likelihood function
\[
\exp(-\Phi(u;y))=\frac{1}{4}\sum^1_{i,j=0}\exp\left(-\frac{(u_1-a_{i,j})^2+(u_2-b_{i,j})^2}{0.2}\right)\,,
\]
with
\[
a=\begin{bmatrix}
6 & 3\\
3 & 0
\end{bmatrix},\quad
b=\begin{bmatrix}
3 & 6\\
0 & 3
\end{bmatrix}\,.
\]
This design of likelihood function induces two separate centers, as shown in Figure \ref{Figure4}. Here, we use $N=2000$ and choose $\Delta t=10^{-4}$ for WEnKI and WEnSRF. They capture the motion of the particles accurately. In comparison, IS loses a lot of particles. In this multimodal example, EnKI and EnSRF fail as expected due to the Gaussian assumption. \textcolor{black}{We should emphasize that WEnSRF and WEnKI are not perfect in practice. Indeed, they are nevertheless the corrected version of EnKI and EnSRF, the two methods that drove most of the particles to one (the red) block. The weighted correction makes sampling theoretically unbiased, but in the end more particles are left in this particular block.}
\begin{figure}[!ht]
     \centering
     \subfloat{\includegraphics[width=1\textwidth,height=0.4\textheight]{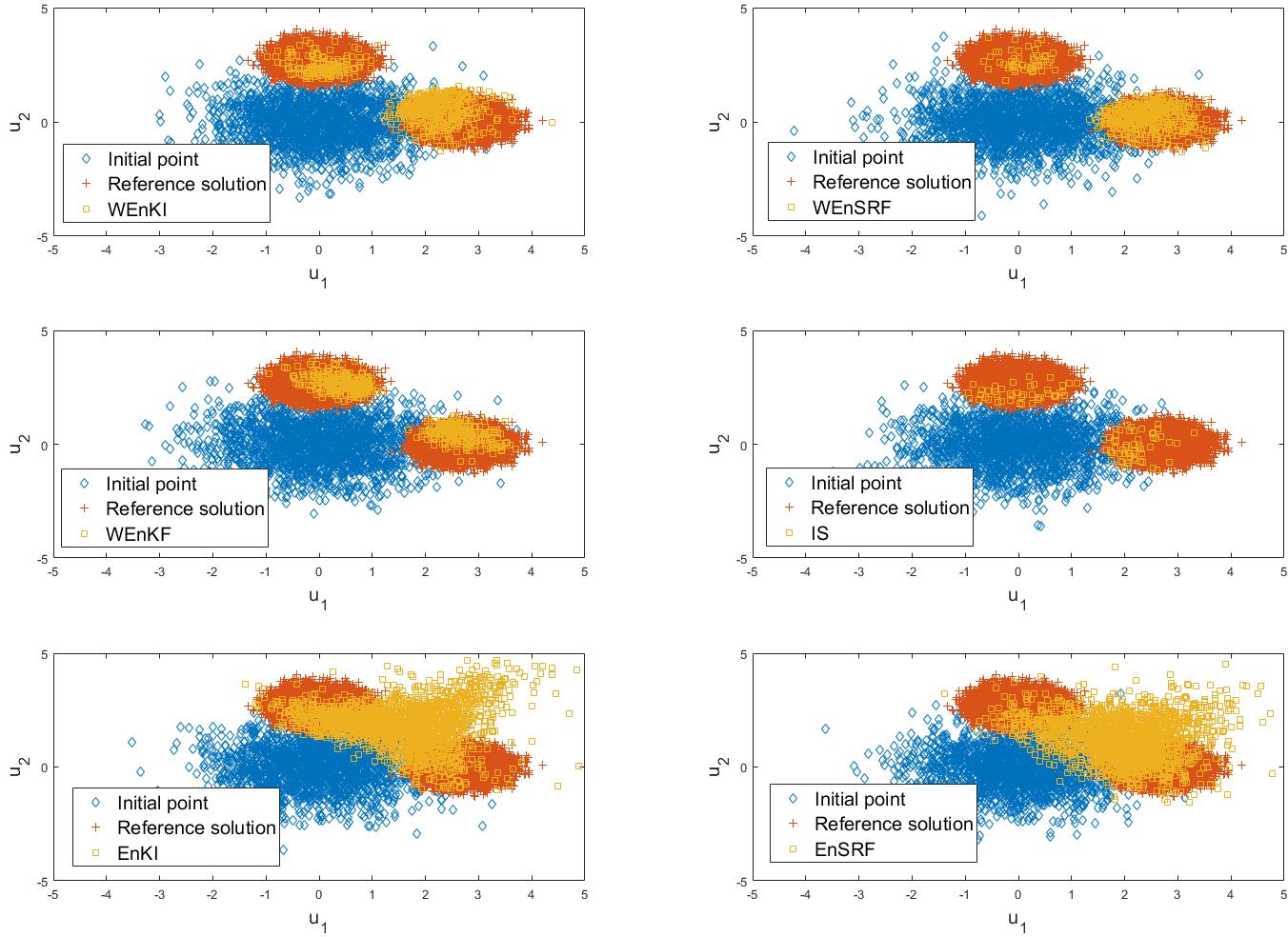}}
     \caption{Example $4$: from left top to bottom right: WEnKI; WEnSRF; WEnKF; IS; EnKI and EnSRF.}
     \label{Figure4}
\end{figure}

\item Example 5: In this case we consider $\mathcal{G}(u_1,u_2)=\left(g_1(u_1,u_2),g_2(u_1,u_2)\right)$ and $y=(0, 0)$, where
\begin{align*}
g_1(u_1,u_2)=(u_1-3)^2+\frac{(u_2-3)^2}{2},\quad g_2(u_1,u_2)=\frac{(u_1-3)^2}{2}+(u_2-3)^2\,.
\end{align*}
$N = 1000$ and $\Delta t=10^{-3}$ for WEnKI and WEnSRF. Results are presented in Figure \ref{Figure5}. Due to the form of $\mathcal{G}$, the center of the likelihood function is $(2,2)$ instead of $(0,0)$ for the prior distribution. Such transition of support is hard for IS to capture. After resampling, only a few samples survive. Visually EnKI and EnSRF still give satisfying results.
\begin{figure}[!ht]
     \centering
     \subfloat{\includegraphics[width=1\textwidth,height=0.4\textheight]{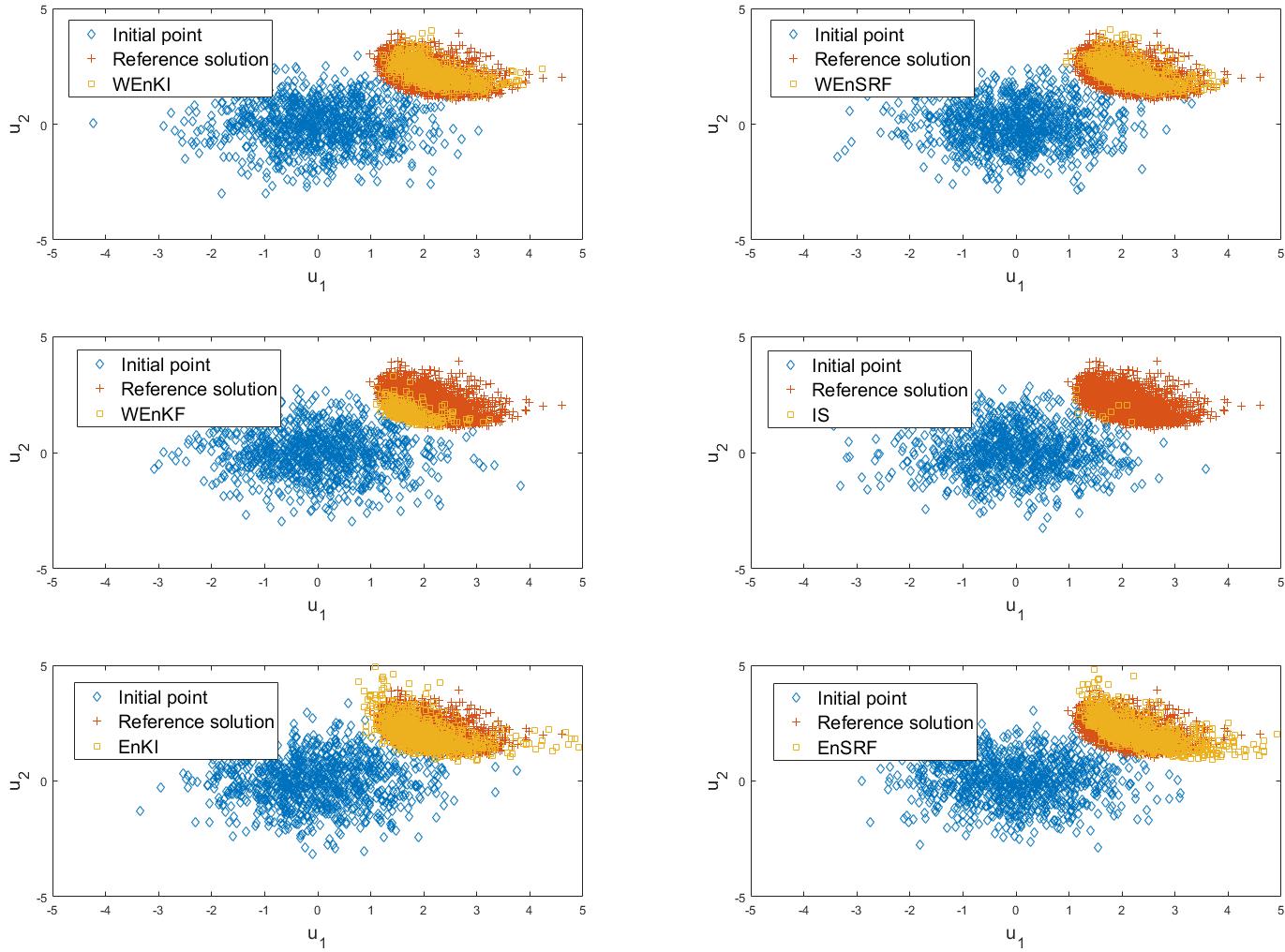}}
     \caption{Example $5$: from left top to bottom right: WEnKI; WEnSRF; WEnKF; IS; EnKI and EnSRF.}
     \label{Figure5}
\end{figure}

\end{enumerate}

To quantitatively understand the performance of the algorithms, we compute the variance of weight~\eqref{var:weight} and accuracy of moments estimation in this example. The variance of the weight, as a function of time, is plotted in Figure \ref{Figureweight:example4} in log scale (shifted by $1$ for positivity). As can be seen clearly, the weight of IS blows up quickly while the two newly proposed methods stay reasonable. In Table \ref{table:example4}, we tabulate the error of higher moments. Even though EnKI and EnSRF are visually good methods, in comparison, they do not capture the moments as well as their weighted versions. 

\begin{figure}[!ht]
     \centering
     \includegraphics[width = .65\textwidth]{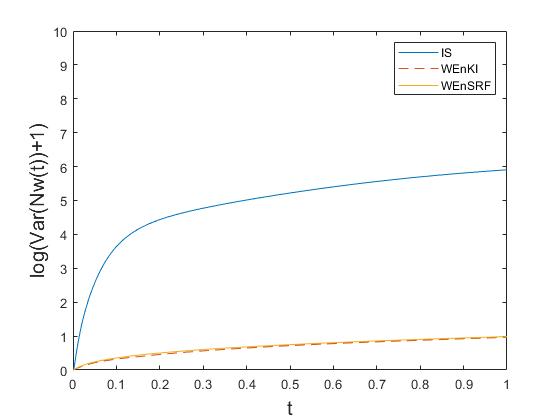}
     \caption{Example 5: $\log(\textrm{Var}(Nw(t))+1)$ for WEnKI, WEnSRF and IS}
     \label{Figureweight:example4}
\end{figure}
\begin{table}[ht]
\caption{Error of moments estimation in Example $5$}
\label{table:example4}
\begin{center}
\begin{tabular}{ |c|c|c|c|c|c|c|c|c|}
 \hline
&\multicolumn{2}{|c|}{WEnKI}&\multicolumn{2}{|c|}{WEnSRF}\\
\hline
Moments&Est.&Re. Error&Est.&Re. Error\\
\hline
$\EE|u|^1=3.32$&3.30&0.0055&3.32&0.0017\\
\hline
$\EE|u|^2=11.16$&10.99&0.0147&11.19&0.0023\\
\hline
$\EE|u|^3=38.05$&36.99&0.0279&38.12&0.0019\\
\hline
$\EE|u|^4=131.45$&125.53&0.0451&131.47&0.0001\\
\hline
$\EE|u|^5=460.56$&429.99&0.0664&459.16&0.0030\\
\hline
\end{tabular}
\begin{tabular}{ |c|c|c|c|c|c|c|c|c|}
 \hline
&\multicolumn{2}{|c|}{EnKI}&\multicolumn{2}{|c|}{EnSRF}\\
\hline
Moments&Est.&Re. Error&Est.&Re. Error\\
\hline
$\EE|u|^1=3.32$&2.96&0.1084&3.28&0.0112 \\
\hline
$\EE|u|^2=11.16$&9.07&0.1872&11.04&0.0111 \\
\hline
$\EE|u|^3=38.05$&29.17&0.2332&38.25&0.0053 \\
\hline
$\EE|u|^4=131.45$&100.32&0.2369&137.43&0.0455 \\
\hline
$\EE|u|^5=460.56$&379.73&0.1755&516.22&0.1208 \\
\hline
\end{tabular}
\begin{tabular}{ |c|c|c|c|c|c|c|c|c|}
 \hline
&\multicolumn{2}{|c|}{WEnKF}&\multicolumn{2}{|c|}{IS}\\
\hline
Moments&Est.&Re. Error&Est.&Re. Error\\
\hline
$\EE|u|^1=3.32$&3.40&0.1658&3.24&0.0245 \\
\hline
$\EE|u|^2=11.16$&7.72&0.3077&10.50&0.0592 \\
\hline
$\EE|u|^3=38.05$&21.74&0.4287&34.10&0.1037 \\
\hline
$\EE|u|^4=131.45$&61.62&0.5313&110.81&0.1571 \\
\hline
$\EE|u|^5=460.56$&175.99&0.6179&360.27&0.2178 \\
\hline
\end{tabular}
\end{center}
\end{table}

\section{Conclusion}
We conclude the paper with a few remarks of the proposed algorithms.

Since EnKI was proposed in~\cite{Iglesias_2013}, the mystery of if and how it works for the nonlinear case has attracted a lot of attention. The surrounding work, such as the wellposedness of the coupled SDE~\cite{Schilling2018}, the wellposedness of the PDE~\cite{ding2019ensemble, ding2019ensembleS}, the mean-field limit of SDE to the PDE, and the convergence rate~\cite{ding2019ensemble, Fournier2015}, and convergence as an optimization method~\cite{chada2019tikhonov,chada2019convergence}, have all been studied in depth, and the use of similar idea leads to development of new algorithms~\cite{lu2019accelerating,ding2019ensembleS}. The investigation into the core sampling problem with nonlinear forward map, however, is thin.

\textcolor{black}{In this paper, by adding the weights to the particles, we are able to correct EnKI (and similarly EnSRF) to ensure the consistency of the algorithm for nonlinear $\mathcal{G}$. The derivation, though tedious, is mathematically straightforward. The resulting ``weight" factor~\eqref{R1}, however, is mathematically messy and physically not intuitive at all. We would like to emphasize that this nonphysical weight term is uniquely determined once the flow is set, namely, if one follows the flow of EnKI,
\[
\rd u^n=\mathrm{Cov}_{up}\Gamma^{-1}\left(y-\MCG(u^n_t)\right)dt+\mathrm{Cov}_{up}\Gamma^{-1/2}\rd W_t\,,
\]
so that in time, the flow provides a linear interpolation between the origin and the target on the logarithmic scale:
\[
\rho(u,t) \sim \mu_\pri\exp\{-t|y-\MCG(u)|^2_{\Gamma}/2\}\,,
\]
then there will be no other ways to define the weight term, and it has to be as tedious and nonphysical as was derived in this paper. This naturally leads to the question if some modifications to the flow can result a physically more meaningful weight function. This, however, is beyond the scope of the current paper.}
\newpage
\appendix
\textcolor{black}{
\section{Derivatives of $\rho$}\label{Derivetiveofrho}
Recall the definition of $\rho$, we compute its time derivative to have:
\[
\begin{aligned}
\partial_t\rho&=-\frac{Z'(t)}{Z^2(t)}\exp\{-t\Phi(u;y)\}\rho_\pri(u)-\frac{\Phi(u;y)}{Z(t)}\exp\{-t\Phi(u;y)\}\rho_\pri(u)\\
& = -\frac{Z'(t)}{Z(t)}\rho -\Phi(u;y)\rho\,.
\end{aligned}
\]
Since $\Phi(u;y)=\frac{1}{2}|y-\MCG(u)|^2_{\Gamma}$
\[
-\frac{Z'(t)}{Z(t)}=-\int -\Phi(u;y) \rho(u,t)du = \EE_{\rho(t)}\left(\frac{1}{2}|y-\mathcal{G}(u)|^2_{\Gamma}\right)\,,
\]
we obtain~\eqref{partialtmu1}.
\\
Compute the $u$ derivative, we have:
\begin{equation}\label{derivative_u}
\nabla \rho=\frac{1}{Z(t)}(\nabla \exp\{-t\Phi(u;y)\})\rho_\pri(u)+\frac{1}{Z(t)}\exp\{-t\Phi(u;y)\}\nabla \rho_\pri(u)\,.
\end{equation}
Noticing
\begin{equation*}
\nabla \exp\{-t\Phi(u;y)\}=-t\nabla \Phi(u;y)\exp\{-t\Phi(u;y)\}
\end{equation*}
and
\begin{equation*}
\nabla \rho_\pri(u)=-\Gamma^{-1}_0(u-u_0)\rho_\pri(u)\,,
\end{equation*}
we plug them in~\eqref{derivative_u} to obtain~\eqref{partialxmu} for $\nabla \rho = \mathcal{V}\rho$.
\\
To compute the hessian, we repeat the process above for
\[
\mathcal{H}_u\rho = \nabla\mathcal{V}u + \mathcal{V}\left(\nabla\rho\right)^\top\,.
\]
While $\mathcal{V}\left(\nabla\rho\right)^\top$ contributes $\mathcal{V}\mathcal{V}^\top\rho$, the derivative of $\mathcal{V}$ become $-t\left(\nabla \mathcal{G}\right)^\top\Gamma^{-1}\nabla\mathcal{G} +t\mathcal{W}$, leading to~\eqref{partialxxmu} in the end.
}
\section{Proof of Proposition~\ref{thm:newweightbound}}\label{Appendix1}
In this appendix, we derive the explicit bound for $\mathcal{R}_i$ in the following lemma, and show Proof of Proposition~\ref{thm:newweightbound}. It plays the crucial role in Theorem~\ref{thm:betterestimation}.
\begin{lemma}\label{lem:boundforR1R3}
Under Assumption \ref{Gassum2}, for all $0<t<1$, $\mathcal{R}_1$, $\mathcal{R}_2$ and $\mathcal{R}_3$ defined in~\eqref{R1} satisfy:
\begin{itemize}
    \item For $\mathcal{R}_1$
\begin{equation}\label{boundr1:new}
\left|\mathcal{R}_1(u,t)\right|\leq C_2(\mathrm{Var}^{\rho(t)}(u))^2+C_1\mathrm{Var}^{\rho(t)}(u)\,.
\end{equation}
\item For $\mathcal{R}_2$
\begin{equation}\label{boundr2:new}
\begin{aligned}
2|\mathcal{R}_2(u^n_t,t)| \leq & \|\Gamma^{-1}\|_2(\Lambda^2|u^\ast_{\mathsf{A}}-\overline{u}^{\rho(t)}|^2+|r|^2+M^2)\,.
\end{aligned}
\end{equation}
\item For $\mathcal{R}_3$
\begin{equation}\label{boundr3:new}
\left|\mathcal{R}_3(u,t)\right|\leq C_3(\mathrm{Var}^{\rho(t)}(u))^2\,.
\end{equation} 
\item More carefully:
\begin{equation}\label{R2bestestimation}
\begin{aligned}
|\mathcal{R}_2(u^n_t,t)|\leq &C_4\left|\overline{u}^{\rho(t)}-u^\ast_{\mathsf{A}}\right|\left[\left|\overline{u}^{\rho(t)}-\Cov^{\rho(t)}_{u,u}(\Cov_\mathsf{A}(t))^{-1}u_\mathsf{A}\right|+\|\Cov_{u,u}^{\rho(t)}\|_2\Lambda_1\right]\\
&+C_4\|\Cov^{\rho(t)}_{\Rm,u}\|_2(|u_\mathsf{A}|+\Lambda_1)\\
&+C_4\left(\left|\overline{u}^{\rho(t)}-u^\ast_{\mathsf{A}}\right|\left\|I-(\Cov_\mathsf{A})^{-1}\Cov^{\rho(t)}_{u,u}\right\|_2+\|\Cov^{\rho(t)}_{\Rm,u}\|_2\right)|u^n_t|\,.
\end{aligned}
\end{equation}
\end{itemize}
In the equation~$\mathrm{Var}^{\rho(t)}(u)=\mathrm{Tr}\left(\Cov^{\rho(t)}_{u,u}\right)$ and all constants $C_1,\cdots,C_4$ are constants depending on $\Lambda$, $\|\Gamma^{-1}\|_2$, $\|\Gamma^{-1}_0\|_2$, $|\mathsf{r}|$, and $M$.
\end{lemma}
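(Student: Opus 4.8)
\textbf{Proof proposal for Lemma~\ref{lem:boundforR1R3}.}
The plan is to push everything through the weakly--nonlinear splitting $\MCG(u)=\mathsf{A}u+\Rm(u)$ and the orthogonality $\Gamma^{-1/2}\Rm(u)\perp\Gamma^{-1/2}\mathsf{A}v$. First I would record the algebraic consequences: $\mathsf{A}^\top\Gamma^{-1}\mathsf{r}=0$, $\mathsf{A}^\top\Gamma^{-1}\Rm(u)=0$ for all $u$, and (differentiating the last relation in $u$) $(\nabla\Rm(u))^\top\Gamma^{-1}\mathsf{A}=0$ and $(\partial_i\nabla\Rm(u))^\top\Gamma^{-1}\mathsf{A}=0$. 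Feeding $\nabla\MCG=\mathsf{A}+\nabla\Rm$ and $y-\MCG(u)=\mathsf{A}(u^\ast_{\mathsf{A}}-u)+(\mathsf{r}-\Rm(u))$ into \eqref{vu} and collapsing the vanishing cross terms gives the clean form
\[
\mathcal{V}(u,t)=\bigl(\Cov_\mathsf{A}(t)\bigr)^{-1}\bigl(u_\mathsf{A}(t)-u\bigr)+\mathcal{E}(u,t),\qquad \mathcal{E}(u,t)=t\,(\nabla\Rm(u))^\top\Gamma^{-1}(\mathsf{r}-\Rm(u)),
\]
so $\|\mathcal{E}(u,t)\|\le \Lambda_1\|\Gamma^{-1}\|_2(|\mathsf{r}|+M)$. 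I would also record the covariance identities $\Cov^{\rho(t)}_{pu}=\mathsf{A}\Cov^{\rho(t)}_{u,u}+\Cov^{\rho(t)}_{\Rm,u}$ and $\Cov^{\rho(t)}_{pp}=\mathsf{A}\Cov^{\rho(t)}_{u,u}\mathsf{A}^\top+\mathsf{A}\Cov^{\rho(t)}_{u,\Rm}+\Cov^{\rho(t)}_{\Rm,u}\mathsf{A}^\top+\Cov^{\rho(t)}_{\Rm,\Rm}$, together with the Lipschitz--variance inequality $\mathrm{Tr}(\Cov^{\rho(t)}_{\phi,\phi})\le(\mathrm{Lip}\,\phi)^2\,\mathrm{Var}^{\rho(t)}(u)$ (by the independent--copy trick) applied to $\phi=\Rm$ and $\phi=\MCG$; these yield $\|\Cov^{\rho(t)}_{\Rm,u}\|_2\le\Lambda_1\,\mathrm{Var}^{\rho(t)}(u)$, $\mathrm{Tr}(\Cov^{\rho(t)}_{pp})\le 4\Lambda^2\,\mathrm{Var}^{\rho(t)}(u)$, and hence $\|\Cov^{\rho(t)}_{up}\|_2\le 2\Lambda\,\mathrm{Var}^{\rho(t)}(u)$.

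With these in hand $\mathcal{R}_1$ and $\mathcal{R}_3$ are routine. In $\mathcal{R}_1$ the first trace $\tfrac12\mathrm{Tr}\{\Cov_{pp}\Gamma^{-1}-2(\nabla\MCG)^\top\Gamma^{-1}\Cov_{pu}\}$ carries exactly one covariance factor, so it is $O(\mathrm{Var}^{\rho(t)}(u))$, while the second trace carries $\Cov_{up}\Gamma^{-1}\Cov_{pu}$ against the bounded bracket $t(\nabla\MCG)^\top\Gamma^{-1}\nabla\MCG+\Gamma^{-1}_0$, hence is $O((\mathrm{Var}^{\rho(t)}(u))^2)$; this gives \eqref{boundr1:new}. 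For $\mathcal{R}_3$ the key observation is that $\mathcal{W}(u)$ is bounded \emph{independently of $|y|$}: its $i$-th column equals $(\partial_i\nabla\MCG(u))^\top\Gamma^{-1}(y-\MCG(u))=(\partial_i\nabla\Rm(u))^\top\Gamma^{-1}(\mathsf{r}-\Rm(u))$ once the $\mathsf{A}$--part of $y-\MCG(u)$ is annihilated by $(\partial_i\nabla\Rm)^\top\Gamma^{-1}\mathsf{A}=0$, so $\|\mathcal{W}(u)\|_2\le C(\Lambda,\|\Gamma^{-1}\|_2,|\mathsf{r}|,M)$; combined with $\|\Cov_{up}\|_2^2\le4\Lambda^2(\mathrm{Var}^{\rho(t)}(u))^2$ this yields \eqref{boundr3:new}.

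The substantive step is $\mathcal{R}_2$. For the rough bound \eqref{boundr2:new}, drop the nonpositive second term so that $\mathcal{R}_2\le\tfrac12|y-\overline{\MCG}^{\rho(t)}|_\Gamma$; since $y-\overline{\MCG}^{\rho(t)}=\mathsf{A}(u^\ast_{\mathsf{A}}-\overline{u}^{\rho(t)})+(\mathsf{r}-\overline{\Rm}^{\rho(t)})$ is an orthogonal sum after $\Gamma^{-1/2}$, Pythagoras gives $|y-\overline{\MCG}^{\rho(t)}|_\Gamma\le\|\Gamma^{-1}\|_2\bigl(\Lambda^2|u^\ast_{\mathsf{A}}-\overline{u}^{\rho(t)}|^2+(|\mathsf{r}|+M)^2\bigr)$. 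For the refined bound \eqref{R2bestestimation} I would complete the square: with $c=\MCG(u)-\overline{\MCG}^{\rho(t)}+\Cov^{\rho(t)}_{pu}\mathcal{V}(u,t)$ one has $\mathcal{R}_2=\langle y-\overline{\MCG}^{\rho(t)},\Gamma^{-1}c\rangle-\tfrac12|c|_\Gamma\le\langle y-\overline{\MCG}^{\rho(t)},\Gamma^{-1}c\rangle$. Substituting the clean form of $\mathcal{V}$ and $\Cov_{pu}=\mathsf{A}\Cov_{u,u}+\Cov_{\Rm,u}$ and regrouping, the $\mathsf{A}$--part of $c$ is $\mathsf{A}\bigl[(I-\Cov_{u,u}(\Cov_\mathsf{A})^{-1})u+(\Cov_{u,u}(\Cov_\mathsf{A})^{-1}u_\mathsf{A}-\overline{u})+\Cov_{u,u}\mathcal{E}\bigr]$, and the remainder $(\Rm(u)-\overline{\Rm})+\Cov_{\Rm,u}(\Cov_\mathsf{A})^{-1}(u_\mathsf{A}-u)+\Cov_{\Rm,u}\mathcal{E}$ is, after $\Gamma^{-1/2}$, perpendicular to $\mathrm{Range}(\Gamma^{-1/2}\mathsf{A})$ (the $\Cov_{\Rm,u}$--columns are killed by $\mathsf{A}^\top\Gamma^{-1}$). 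Splitting $\langle y-\overline{\MCG}^{\rho(t)},\Gamma^{-1}c\rangle$ along this decomposition, the parallel piece is $(u^\ast_{\mathsf{A}}-\overline{u})^\top\mathsf{A}^\top\Gamma^{-1}\mathsf{A}[\,\cdot\,]$, bounded by $C|u^\ast_{\mathsf{A}}-\overline{u}|\bigl(\|I-(\Cov_\mathsf{A})^{-1}\Cov_{u,u}\|_2|u^n_t|+|\overline{u}-\Cov_{u,u}(\Cov_\mathsf{A})^{-1}u_\mathsf{A}|+\|\Cov_{u,u}\|_2\Lambda_1\bigr)$, and the perpendicular piece is $(\mathsf{r}-\overline{\Rm})^\top\Gamma^{-1}[\,\cdot\,]$, bounded using $|\mathsf{r}-\overline{\Rm}|\le|\mathsf{r}|+M$, $|\Rm(u)-\overline{\Rm}|\le 2M$, $\|(\Cov_\mathsf{A})^{-1}\|_2\le C$ and $\|\mathcal{E}\|\le C\Lambda_1$ by $C\|\Cov_{\Rm,u}\|_2(|u_\mathsf{A}|+|u^n_t|+\Lambda_1)$. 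Summing gives \eqref{R2bestestimation}.

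The hard part is exactly this last regrouping: one must see that after inserting the Gaussian--plus--bounded form of $\mathcal{V}$, the genuinely $u$--linear part of $c$ carries only the \emph{small} operator $\mathsf{A}(I-\Cov_{u,u}(\Cov_\mathsf{A})^{-1})-\Cov_{\Rm,u}(\Cov_\mathsf{A})^{-1}$, which vanishes when $\Rm$ is constant (then $\Cov_{u,u}=\Cov_\mathsf{A}$, $\Cov_{\Rm,u}=0$); this is what makes the $\mathcal{R}_2$--contribution to the differential inequality linear in $|u^n_t|$ with a coefficient vanishing as $\Lambda_1\to0$, and ultimately produces the off-diagonal decay $W_{2,1}(t)\to0$ in Proposition~\ref{thm:newweightbound}. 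Discarding the nonpositive $-\tfrac12|c|_\Gamma$ is harmless because $\mathcal{R}_2$ enters only through $\tfrac{\rd}{\rd t}\EE(N\omega^n_t)^2\le 2\EE\{(\mathcal{R}_1+\mathcal{R}_2+\mathcal{R}_3)(N\omega^n_t)^2\}$, in which any nonpositive term may be dropped.
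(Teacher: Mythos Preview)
Your proposal is correct and follows essentially the same route as the paper: record the orthogonality consequences of Assumption~\ref{Gassum2}, use them to collapse $\mathcal{W}(u)$ to a $|y|$--free form and to bound $\mathcal{R}_1,\mathcal{R}_3$ by variance terms via the Lipschitz--variance inequality, and handle $\mathcal{R}_2$ by splitting $y-\overline{\MCG}$ and $y-\MCG(u)-\Cov_{pu}\mathcal{V}$ into their $\mathsf{A}$-- and $\Rm$--components and inserting the clean form $\mathcal{V}=(\Cov_\mathsf{A})^{-1}(u_\mathsf{A}-u)+\mathcal{E}$. The only tactical difference is that you complete the square once around $y-\overline{\MCG}$ and then split $\langle y-\overline{\MCG},\Gamma^{-1}c\rangle$ by orthogonality, whereas the paper first applies Pythagoras to split $\mathcal{R}_2$ into two differences of $|\cdot|^2_\Gamma$--terms and then bounds each; this produces an extra harmless constant $\langle \mathsf{r}-\overline{\Rm},\Gamma^{-1}(\Rm(u)-\overline{\Rm})\rangle$ in your perpendicular piece that you silently dropped when writing ``$C\|\Cov_{\Rm,u}\|_2(|u_\mathsf{A}|+|u^n_t|+\Lambda_1)$'', so be careful to keep track of it (the paper sidesteps it by centering the perpendicular square at $\mathsf{r}-\Rm(u^n_t)$ rather than $\mathsf{r}-\overline{\Rm}$).
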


\begin{proof}
This comes from direct calculation. Firstly to show~\eqref{boundr1:new}, we plug \eqref{linear} into \eqref{R1}:
\begin{equation*}
\begin{aligned}
&\mathcal{R}_1(u,t)\\
=&\frac{1}{2}\mathrm{Tr}\left\{\Cov_{uu}^{\rho(t)}\mathsf{A}^\top\Gamma^{-1}\mathsf{A}-2\mathsf{A}^\top\Gamma^{-1}\mathsf{A}\Cov_{uu}^{\rho(t)}\right\}\\
\quad&+\frac{1}{2}\mathrm{Tr}\left\{\Cov_{uu}^{\rho(t)}\mathsf{A}^\top\Gamma^{-1}\mathsf{A}\Cov_{uu}^{\rho(t)}\left[t\left(\nabla \MCG(u)\right)^\top\Gamma^{-1}\nabla \MCG(u)+\Gamma^{-1}_0\right]\right\}\\
\quad&+\frac{1}{2}\mathrm{Tr}\left\{\Cov_{\mathsf{m}\mathsf{m}}^{\rho(t)}\Gamma^{-1}-2\left(\nabla \mathsf{m}\right)^\top\Gamma^{-1}\Cov_{\mathsf{m}u}^{\rho(t)}\right\}\\
\quad&+\frac{1}{2}\mathrm{Tr}\left\{\Cov_{u\mathsf{m}}^{\rho(t)}\Gamma^{-1}\Cov_{\mathsf{m}u}^{\rho(t)}\left[t\left(\nabla \MCG(u)\right)^\top\Gamma^{-1}\nabla \MCG(u)+\Gamma^{-1}_0\right]\right\}\,,
\end{aligned}
\end{equation*}
where we use \eqref{eqn:assum2} and the first term is less than $0$. Notice
\[
\begin{aligned}
\mathrm{Var}^{\rho(t)}(\sm(u))&=\mathrm{Tr}\left(\Cov^{\rho(t)}_{\Rm,\Rm}\right)=\EE^{\rho(t)}\left|\Rm(u)-\overline{\Rm}\right|^2\\
&\leq \EE^{\rho(t)}\left|\Rm(u)-\Rm(\overline{u})\right|^2\leq \Lambda^2\EE^{\rho(t)}\left|u-\overline{u}\right|^2=\Lambda^2\mathrm{Var}^{\rho(t)}(u)\,,
\end{aligned}
\]
we have the following five inequalities:
\begin{equation}
\begin{aligned}
\mathrm{Tr}\left\{\Cov_{uu}^{\rho(t)}\mathsf{A}^\top\Gamma^{-1}\mathsf{A}\right\}&\leq \Lambda^2\|\Gamma^{-1}\|_2\mathrm{Var}^{\rho(t)}(u)\,,\\
\mathrm{Tr}\left\{\Cov_{\mathsf{m}\mathsf{m}}^{\rho(t)}\Gamma^{-1}\right\}&\leq \|\Gamma^{-1}\|_2\mathrm{Var}^{\rho(t)}(\sm(u))\leq \Lambda^2\|\Gamma^{-1}\|_2\mathrm{Var}^{\rho(t)}(u)\,,\\
\left|\mathrm{Tr}\left\{\Gamma^{-1}\Cov_{\mathsf{m}u}^{\rho(t)}\right\}\right|&=\EE \left(\Rm(u)-\overline{\Rm}\right)^\top\Gamma^{-1} (u-\overline{u})\leq 
\|\Gamma^{-1}\|_2\EE\left|\Rm(u)-\overline{\Rm}\right|\left|u-\overline{u}\right|\\
&\leq\|\Gamma^{-1}\|_2(\mathrm{Var}^{\rho(t)}(\sm(u)))^{1/2}(\mathrm{Var}^{\rho(t)}(u))^{1/2}\\
&\leq \Lambda\|\Gamma^{-1}\|_2\mathrm{Var}^{\rho(t)}(u)\,,
\end{aligned}
\end{equation}
and furthermore
\begin{equation}\label{Covestimation1}
\begin{aligned}
\mathrm{Tr}\left\{\Cov_{uu}^{\rho(t)}\mathsf{A}^\top\Gamma^{-1}\mathsf{A}\Cov_{uu}^{\rho(t)}\right\}&\leq \mathrm{Tr}\left\{\Cov_{uu}^{\rho(t)}\mathsf{A}^\top\Gamma^{-1}\mathsf{A}\right\}\|\Cov_{uu}^{\rho(t)}\|_2\\
&\leq \Lambda^2\|\Gamma^{-1}\|_2(\mathrm{Var}^{\rho(t)}(u))^2\,,
\end{aligned}
\end{equation}
and
\begin{equation}\label{Covestimation2}
\begin{aligned}
\mathrm{Tr}\left\{\Cov_{u\sm}^{\rho(t)}\Gamma^{-1}\Cov_{\sm u}^{\rho(t)}\right\}
&\leq \|\Gamma^{-1}\|_2\|\Cov_{u\sm}^{\rho(t)}\|^2_F\leq \|\Gamma^{-1}\|_2\EE|u-\overline{u}|^2\EE|\Rm-\overline{\Rm}|^2\\
&\leq \|\Gamma^{-1}\|_2(\mathrm{Var}^{\rho(t)}(\sm(u)))(\mathrm{Var}^{\rho(t)}(u))\\
&\leq \Lambda^2\|\Gamma^{-1}\|_2(\mathrm{Var}^{\rho(t)}(u))^2\,,
\end{aligned}
\end{equation}
we have
\[
|\mathcal{R}_1(u,t)|\leq \frac{3}{2}\Lambda^2\|\Gamma^{-1}\|_2\mathrm{Var}^{\rho(t)}(u)+\left[t\Lambda^2\|\Gamma^{-1}\|_2+\|\Gamma^{-1}_0\|_2\right]\Lambda^2\|\Gamma^{-1}\|_2(\mathrm{Var}^{\rho(t)}(u))^2\,.
\]
Let
\[
C_1=\frac{3}{2}\Lambda^2\|\Gamma^{-1}\|_2\,,\quad C_2=\left[t\Lambda^2\|\Gamma^{-1}\|_2+\|\Gamma^{-1}_0\|_2\right]\Lambda^2\|\Gamma^{-1}\|_2\,,
\]
we obtain \eqref{boundr1:new}.  To bound $\mathcal{R}_3$, we first notice 
\[
\mathcal{W}(u)=\left[(\partial_1\nabla\Rm(u))^\top\Gamma^{-1}(\mathsf{r}-\Rm(u))\,, \cdots\,, (\partial_L\nabla\Rm(u))^\top\Gamma^{-1}(\mathsf{r}-\Rm(u))\right]\,.
\]
by plugging in \eqref{linear},\eqref{eqn:errorsfr},\eqref{r:orthorgonal}. Then we have
\[
\begin{aligned}
\left|\mathcal{R}_3(u,t)\right|&\leq \frac{t\mathrm{Tr}\left\{\Cov^{\rho(t)}_{uu}\mathsf{A}^\top\Gamma^{-1}\mathsf{A}\Cov^{\rho(t)}_{uu}+\Cov^{\rho(t)}_{u\sm}\Gamma^{-1}\Cov^{\rho(t)}_{\sm u}\right\}\|\mathcal{W}(u)\|_2}{2}\\
&\leq C'(\Lambda,\|\Gamma^{-1}\|_2)(1+|\mathsf{r}|)\Lambda^2\|\Gamma^{-1}\|_2(\mathrm{Var}^{\rho(t)}(u))^2\,,
\end{aligned}
\]
where we use \eqref{Covestimation1},\eqref{Covestimation2} and
\begin{align*}
\|\mathcal{W}(u)\|_2\leq &\|\mathcal{W}(u)\|_F\leq\frac{L}{2}\left(\|\mathcal{H}\left(|\Rm|^2_{\Gamma}\right)\|_2+\|\Gamma^{-1}\|_2\|\nabla \Rm\|^2_2\right)\\
&+|\mathsf{r}|\|\Gamma^{-1}\|_2\max_{1\leq i\leq L}\{\|\partial_i\nabla \Rm\|_2\}\\
\leq &C'(\Lambda,\|\Gamma^{-1}\|_2)(1+|\mathsf{r}|)\,.
\end{align*}
We obtain \eqref{boundr3:new} by defining $C_3=C'(\Lambda,\|\Gamma^{-1}\|_2)(1+|\mathsf{r}|)\Lambda^2\|\Gamma^{-1}\|_2$.

To show~\eqref{boundr2:new}, we simply plug in the weak nonlinearity assumption for:
\begin{equation*}
\begin{aligned}
2|\mathcal{R}_2(u^n_t,t)|\leq &\left(y-\overline{\MCG}^{\rho(t)}\right)^\top\Gamma^{-1}\left(y-\overline{\MCG}^{\rho(t)}\right)= \left|\mathsf{A}\left(u^\ast_{\mathsf{A}}-\overline{u}^{\rho(t)}\right)\right|^2_{\Gamma}+\left|\mathsf{r}-\overline{\sm}^{\rho(t)}\right|^2_{\Gamma}\\
\leq & \|\Gamma^{-1}\|_2(\Lambda^2|u^\ast_{\mathsf{A}}-\overline{u}^{\rho(t)}|^2+|r|^2+M^2)\,.
\end{aligned}
\end{equation*}

Finally,
\begin{equation}\label{R2Tfirst}
\begin{aligned}
\mathcal{R}_2(u^n_t,t)&=\frac{1}{2}\left|\mathsf{A}(\overline{u}^{\rho(t)}-u^\ast_{\mathsf{A}})\right|^2_\Gamma-\frac{1}{2}\left|\mathsf{A}(u^\ast_{\mathsf{A}}-u^n_t)-\mathsf{A}\Cov^{\rho(t)}_{u,u}\mathcal{V}(u^n_t,t)\right|^2_\Gamma\\
&+\frac{1}{2}\left|\mathsf{r}-\Rm(u^n_t)\right|^2_\Gamma-\frac{1}{2}\left|\mathsf{r}-\Rm(u^n_t)-\Cov^{\rho(t)}_{\Rm,u}\mathcal{V}(u^n_t,t)\right|^2_\Gamma\,.
\end{aligned}
\end{equation}

According to the definition of $\mathcal{V}$,
\begin{equation}\label{eqn:Vnt}
\begin{aligned}
\mathcal{V}(u^n_t,t)&=t\mathsf{A}^\top\Gamma^{-1}\mathsf{A}(u^\ast_{\mathsf{A}}-u^n_t)-\Gamma^{-1}_0(u^n_t-u_0)+t\left(\nabla \Rm\right)^\top\Gamma^{-1}(\mathsf{r}-\Rm(u^n_t))\\
&=(\Cov_\mathsf{A}(t))^{-1}(u_\mathsf{A}-u^n_t)+t\left(\nabla \Rm\right)^\top\Gamma^{-1}(\mathsf{r}-\Rm(u^n_t))\,,
\end{aligned}
\end{equation}
and thus
\[
\begin{aligned}
&\frac{1}{2}\left|\mathsf{A}(u^\ast_{\mathsf{A}}-u^n_t)-\mathsf{A}\Cov^{\rho(t)}_{u,u}\mathcal{V}(u^n_t,t)\right|^2_\Gamma\\
=&\frac{1}{2}\left|\mathsf{A}\left[(I-\Cov^{\rho(t)}_{u,u}(\Cov_\mathsf{A}(t))^{-1})u^n_t+\Cov^{\rho(t)}_{u,u}(\Cov_\mathsf{A}(t))^{-1}u_\mathsf{A}(t)-u^\ast_{\mathsf{A}}+\mathcal{M}(u)\right]\right|^2_\Gamma\,,
\end{aligned}
\]
where
\[
\mathcal{M}(u)=t\Cov^{\rho(t)}_{u,u}\left(\nabla \Rm\right)^\top\Gamma^{-1}(\mathsf{r}-\Rm(u))\,.
\]
This means the first two terms in~\eqref{R2Tfirst} are controlled by:
\begin{equation}\label{R2firsttwoterms}
\begin{aligned}
&\frac{1}{2}\left|\mathsf{A}(\overline{u}^{\rho(t)}-u^\ast_{\mathsf{A}})\right|^2_\Gamma-\frac{1}{2}\left|\mathsf{A}(u^\ast_{\mathsf{A}}-u^n_t)-\mathsf{A}\Cov^{\rho(t)}_{u,u}\mathcal{V}(u^n_t,t)\right|^2_\Gamma\\
\leq&\frac{1}{2}\left|\mathsf{A}(\overline{u}^{\rho(t)}-u^\ast_{\mathsf{A}})\right|^2_\Gamma\\
&-\frac{1}{2}\left|\mathsf{A}\left[(I-\Cov^{\rho(t)}_{u,u}(\Cov_\mathsf{A})^{-1})u^n_t+\Cov^{\rho(t)}_{u,u}(\Cov_\mathsf{A}(t))^{-1}u_\mathsf{A}-u^\ast_{\mathsf{A}}+\mathcal{M}(u)\right]\right|^2_\Gamma\\
\leq&C'\left|\mathsf{A}(\overline{u}^{\rho(t)}-u^\ast_{\mathsf{A}})\right|_\Gamma\\
&\cdot\left|\mathsf{A}\left[(I-(\Cov_\mathsf{A})^{-1}\Cov^{\rho(t)}_{u,u})u^n_t+\Cov^{\rho(t)}_{u,u}(\Cov_\mathsf{A}(t))^{-1}u_\mathsf{A}-\overline{u}^{\rho(t)}+\mathcal{M}(u)\right]\right|_\Gamma\\
\leq &C'\left|\overline{u}^{\rho(t)}-u^\ast_{\mathsf{A}}\right|\left[\left|\overline{u}^{\rho(t)}-\Cov^{\rho(t)}_{u,u}(\Cov_\mathsf{A}(t))^{-1}u_\mathsf{A}\right|+\|\Cov_{u,u}^{\rho(t)}\|_2\Lambda_1\right]\\
&+C'\left|\overline{u}^{\rho(t)}-u^\ast_{\mathsf{A}}\right|\left\|I-(\Cov_\mathsf{A})^{-1}\Cov^{\rho(t)}_{u,u}\right\|_2|u^n_t|\,,
\end{aligned}
\end{equation}
where $C'$ is a constant depending on $\Lambda$, $\|\Gamma^{-1}\|_2$, $\|\Gamma^{-1}_0\|_2$, $|\mathsf{r}|$ and $M$. Furthermore, the latter two terms in~\eqref{R2Tfirst} are bounded by:
\begin{equation}\label{R2secondtwoterms}
\begin{aligned}
&\frac{1}{2}\left|\mathsf{r}-\Rm(u^n_t)\right|^2_\Gamma-\frac{1}{2}\left|\mathsf{r}-\Rm(u^n_t)-\Cov^{\rho(t)}_{\Rm,u}\mathcal{V}(u^n_t,t)\right|^2_\Gamma\\
\leq &\left\langle \Gamma^{-1}(\mathsf{r}-\Rm(u^n_t)),\Cov^{\rho(t)}_{\Rm,u}(\Cov_\mathsf{A}(t))^{-1}(u_\mathsf{A}-u^n_t)\right\rangle\\
&+\left\langle \Gamma^{-1}(\mathsf{r}-\Rm(u^n_t)),t\Cov^{\rho(t)}_{\Rm,u}\left(\nabla \Rm\right)^\top\Gamma^{-1}(\mathsf{r}-\Rm(u^n_t))\right\rangle\\
\leq &C''\|\Cov^{\rho(t)}_{\Rm,u}\|_2(|u_\mathsf{A}|+|u^n_t|)+C''\Lambda_1\|\Cov^{\rho(t)}_{\Rm,u}\|_2\\
\leq &C''\|\Cov^{\rho(t)}_{\Rm,u}\|_2(|u_\mathsf{A}|+\Lambda_1)+C''\|\Cov^{\rho(t)}_{\Rm,u}\|_2|u^n_t|\,,
\end{aligned}
\end{equation}
where $C''$ is a constant depending on $\Lambda$, $\|\Gamma^{-1}\|_2$, $\|\Gamma^{-1}_0\|_2$, $|\mathsf{r}|$, and $M$.

These together give the upper bound for $\mathcal{R}_2$. Call the constant $C_4$, we finish the proof.
\end{proof}

Now we are ready to prove Proposition~\ref{thm:newweightbound}.
\begin{proof}
We first estimate $\EE\left(|u^n_t|^2(\omega^n_t)^2\right)$. Using \eqref{eqn:weki_SDE} and It\^o's formula, we obtain
\[
\rd |u^n_t|^2(\omega^n_t)^2= 2(\omega^n_t)^2 \left\langle \rd u^n_t,u^n_t\right\rangle+(\omega^n_t)^2\left\langle \rd u^n_t,\rd u^n_t\right\rangle+2|u^n_t|^2 \omega^n_t\rd \omega^n_t\,,
\]
and thus
\begin{equation}\label{eqn:termbound}
\begin{aligned}
\frac{\rd}{\rd t} \EE|u^n_t|^2(\omega^n_t)^2 = &2\EE\left((\omega^n_t)^2\left\langle \mathrm{Cov}_{up}^{\rho(t)}\Gamma^{-1}\left(y-\MCG(u^n_t)\right),u^n_t\right\rangle\right)\\
&+\EE\left\{(\omega^n_t)^2\mathrm{Tr}\left(\mathrm{Cov}_{up}^{\rho(t)}\Gamma^{-1}\mathrm{Cov}_{pu}^{\rho(t)}\right)\right\}\\
&+2\EE\left\{|u^n_t|^2 (\omega^n_t)^2\left|\mathcal{R}_1(u^n_t,t)+\mathcal{R}_2(u^n_t,t)+\mathcal{R}_3(u^n_t,t)\right|\right\}\,.
\end{aligned}
\end{equation}

We now bound the two terms respectively. For the first:
\begin{equation}\label{boundforfirstterm:new}
\begin{aligned}
&\EE\left\{(\omega^n_t)^2\left[\left\langle \mathrm{Cov}_{up}^{\rho(t)}\Gamma^{-1}\left(y-\MCG(u^n_t)\right),u^n_t\right\rangle+\frac{1}{2}\mathrm{Tr}\left(\mathrm{Cov}_{up}^{\rho(t)}\Gamma^{-1}\mathrm{Cov}_{pu}^{\rho(t)}\right)\right]\right\}\\
=&\EE\left\{(\omega^n_t)^2\left[\left\langle \mathrm{Cov}_{uu}^{\rho(t)}\mathsf{A}^\top\Gamma^{-1}\mathsf{A}\left(u^\ast_{\mathsf{A}}-u^n_t\right)+\mathrm{Cov}_{u\mathsf{m}}^{\rho(t)}\Gamma^{-1}(\mathsf{r}-\Rm(u^n_t)),u^n_t\right\rangle\right]\right\}\\
&+\frac{1}{2}\EE\left\{(\omega^n_t)^2\left[\mathrm{Tr}\left(\mathrm{Cov}_{uu}^{\rho(t)}\mathsf{A}^\top\Gamma^{-1}\mathsf{A}\mathrm{Cov}_{u u}^{\rho(t)}+\mathrm{Cov}_{u\sm}^{\rho(t)}\Gamma^{-1}\mathrm{Cov}_{\sm u}^{\rho(t)}\right)\right]\right\}\\
\leq& \Lambda^2\|\Gamma^{-1}\|_2\mathrm{Var}^{\rho(t)}(u)\EE\left[(|u^\ast_{\mathsf{A}}||u^n_t|+|u^n_t|^2+|\mathsf{r}||u^n_t|+M|u^n_t|)(\omega^n_t)^2\right]\\
&+\Lambda^2\|\Gamma^{-1}\|_2(\mathrm{Var}^{\rho(t)}(u))^2\EE(\omega^n_t)^2\\
\leq& \Lambda^2\|\Gamma^{-1}\|_2\mathrm{Var}^{\rho(t)}(u)\left\{\left(\frac{|u^\ast_{\mathsf{A}}|+|\mathsf{r}|+M}{2}+1\right)\EE\left[|u^n_t|^2(\omega^n_t)^2\right]\right.\\
&\left.+\left[(\mathrm{Var}^{\rho(t)}(u))+\left(\frac{|u^\ast_{\mathsf{A}}|+|\mathsf{r}|+M}{2}\right)\right]\EE(\omega^n_t)^2\right\}\,,
\end{aligned}
\end{equation}
where we use \eqref{linear} in the first equality and that
\[
\begin{aligned}
\left\langle \mathrm{Cov}_{uu}^\rho\mathsf{A}^\top\Gamma^{-1}\mathsf{A}\left(u^\ast_{\mathsf{A}}-u^n_t\right),u^n_t\right\rangle&\leq \|\mathrm{Cov}_{uu}^\rho\mathsf{A}^\top\Gamma^{-1}\mathsf{A}\|_2\left[(|u^\ast_{\mathsf{A}}||u^n_t|+|u^n_t|^2\right]\\
&\leq \Lambda^2\|\Gamma^{-1}\|_2\mathrm{Var}^{\rho(t)}(u)\left[|u^\ast_{\mathsf{A}}||u^n_t|+|u^n_t|^2\right]\,
\end{aligned}
\]
and
\[
\begin{aligned}
\left\langle \mathrm{Cov}_{u\Rm}^\rho\Gamma^{-1}(\mathsf{r}-\Rm(u)),u^n_t\right\rangle&\leq \left[\EE \left|\Rm(u)-\overline{\Rm}\right|\|\Gamma^{-1}\|_2 \left|u-\overline{u}\right|\right]\left[|\mathsf{r}||u^n_t|+M|u^n_t|\right]\\
&\leq \Lambda^2\|\Gamma^{-1}\|_2\mathrm{Var}^{\rho(t)}(u)\left[|\mathsf{r}||u^n_t|+M|u^n_t|\right]\,,
\end{aligned}
\]
where~\eqref{Covestimation1} and~\eqref{Covestimation2} are applied.

For second term in \eqref{eqn:termbound}, we simply apply the inequalities~\eqref{boundr1:new}-\eqref{boundr2:new}. These together provides the estimate of~\eqref{eqn:termbound} as
\begin{equation}\label{changeofu2w2}
\begin{aligned}
\frac{\rd}{\rd t} \EE|u^n_t|^2(\omega^n_t)^2 \leq &\widetilde{C}\left[(\mathrm{Var}^{\rho(t)}(u))^2+\mathrm{Var}^{\rho(t)}(u)+|u^\ast_{\mathsf{A}}|\mathrm{Var}^{\rho(t)}(u)\right]\EE\left[|u^n_t|^2(\omega^n_t)^2\right]\\
&+\widetilde{C}\left[\left|u^\ast_{\mathsf{A}}-\overline{u}^{\rho(t)}\right|^2+1\right]\EE\left[|u^n_t|^2(\omega^n_t)^2\right]\\
&+\widetilde{C}\mathrm{Var}^{\rho(t)}(u)\left[\mathrm{Var}^{\rho(t)}(u)+\left|u^\ast_{\mathsf{A}}\right|+1\right]\EE\left[(\omega^n_t)^2\right]\,,
\end{aligned}
\end{equation}
where $\widetilde{C}$ is a constant depends on $\Lambda$, $\|\Gamma^{-1}\|_2$, $\|\Gamma^{-1}_0\|_2$, $|\mathsf{r}|$ and $M$.

Next we estimate $\EE(w^n_t)^2$. Note that, according to~\eqref{eqn:weki_SDE}, one has
\begin{equation}\label{omegainequality:new}
\frac{1}{2}\frac{\rd}{\rd t}\EE(\omega^n_t)^2 \leq \left(\|\mathcal{R}_1\|_{\infty}+\left|\mathcal{R}_2(u^n_t,t)\right|+\|\mathcal{R}_3\|_{\infty}\right)\EE(\omega^n_t)^2\,.
\end{equation}
To control these terms we apply~\eqref{boundr1:new},~\eqref{boundr3:new} and~\eqref{R2bestestimation}, which leads to:
\begin{equation}\label{changeofw2}
\begin{aligned}
&\frac{\rd}{\rd t}\EE(\omega^n_t)^2\\
\leq &\,C\left[(\mathrm{Var}^{\rho(t)}(u))^2+\mathrm{Var}^{\rho(t)}(u)\right]\EE(\omega^n_t)^2\\
&+C\left\{\left|\overline{u}^{\rho(t)}-u^\ast_{\mathsf{A}}\right|\left[\left|\overline{u}^{\rho(t)}-\Cov^{\rho(t)}_{u,u}(\Cov_\mathsf{A}(t))^{-1}u_\mathsf{A}\right|+\|\Cov_{u,u}^{\rho(t)}\|_2\Lambda_1\right]\right\}\EE(\omega^n_t)^2\\
&+C\|\Cov^{\rho(t)}_{\Rm,u}\|_2(|u_\mathsf{A}|+\Lambda_1)\EE(\omega^n_t)^2\\
&+C\left(\left|\overline{u}^{\rho(t)}-u^\ast_{\mathsf{A}}\right|\left\|I-(\Cov_\mathsf{A})^{-1}\Cov^{\rho(t)}_{u,u}\right\|_2+\|\Cov^{\rho(t)}_{\Rm,u}\|_2\right)\EE\left[|u^n_t|(\omega^n_t)^2\right]\\
\leq &\,C\left[(\mathrm{Var}^{\rho(t)}(u))^2+\mathrm{Var}^{\rho(t)}(u)\right]\EE(\omega^n_t)^2\\
&+C\left\{\left|\overline{u}^{\rho(t)}-u^\ast_{\mathsf{A}}\right|\left[\left|\overline{u}^{\rho(t)}-\Cov^{\rho(t)}_{u,u}(\Cov_\mathsf{A}(t))^{-1}u_\mathsf{A}\right|+\|\Cov_{u,u}^{\rho(t)}\|_2\Lambda_1\right]\right\}\EE(\omega^n_t)^2\\
&+C\|\Cov^{\rho(t)}_{\Rm,u}\|_2(|u_\mathsf{A}|+\Lambda_1)\EE(\omega^n_t)^2\\
&+C\left(\left|\overline{u}^{\rho(t)}-u^\ast_{\mathsf{A}}\right|\left\|I-(\Cov_\mathsf{A})^{-1}\Cov^{\rho(t)}_{u,u}\right\|_2+\|\Cov^{\rho(t)}_{\Rm,u}\|_2\right)\EE(\omega^n_t)^2/2\\
&+C\left(\left|\overline{u}^{\rho(t)}-u^\ast_{\mathsf{A}}\right|\left\|I-(\Cov_\mathsf{A})^{-1}\Cov^{\rho(t)}_{u,u}\right\|_2+\|\Cov^{\rho(t)}_{\Rm,u}\|_2\right)\EE\left[|u^n_t|^2(\omega^n_t)^2\right]/2\,,
\end{aligned}
\end{equation}
where $C$ is a constant depends on $\Lambda$, $\|\Gamma^{-1}\|_2$, $\|\Gamma^{-1}_0\|_2$, $|\mathsf{r}|$ and $M$.

Combine~\eqref{changeofu2w2} and~\eqref{changeofw2}, we have
\[
\frac{\rd }{\rd t}\left(
\begin{aligned}
& \EE|u^n_t|^2(\omega^n_t)^2\\
&\EE(\omega^n_t)^2
\end{aligned}
\right) \leq W(t)\left(
\begin{aligned}
& \EE|u^n_t|^2(\omega^n_t)^2\\
&\EE (N\omega^n_t)^2
\end{aligned}
\right)\,.
\]
Multiply $N^2$ on both sides of this inequality and notice $\EE(N\omega^n_t)=1$, we have, for $0\leq t\leq 1$
\[
\frac{\rd }{\rd t}\left(
\begin{aligned}
& \EE|u^n_t|^2(N\omega^n_t)^2\\
&\EE(N\omega^n_t)^2-(\EE N\omega^n_t)^2+1
\end{aligned}
\right) \leq W(t)\left(
\begin{aligned}
& \EE|Nu^n_t|^2(\omega^n_t)^2\\
&\EE (N\omega^n_t)^2-(\EE N\omega^n_t)^2+1
\end{aligned}
\right)\,,
\]
which concludes~\eqref{betterboundforvarianceofweight}, hence the proposition.
\end{proof}

\bibliographystyle{AIMS}
\bibliography{enkf}
\medskip
Received xxxx 20xx; revised xxxx 20xx.
\medskip

\end{document}